\newtheorem{thm}{Theorem}[section]
\newtheorem{lem}[thm]{Lemma}
\newtheorem{cor}[thm]{Corollary}
\newtheorem{prop}[thm]{Proposition}
\newtheorem{ex}[thm]{Example}
\newtheorem{qu}[thm]{Question}
\newtheorem{con}[thm]{Conjecture}
\newtheorem*{prob*}{Open problem}
\theoremstyle{definition}
\newtheorem{defi}[thm]{Definition}
\theoremstyle{remark}
\newtheorem{rem}[thm]{Remark}
\newtheorem*{rem*}{Remark}
\DeclareMathOperator{\Aff}{Aff}
\DeclareMathOperator{\Aut}{Aut}
\newcommand{\kringel}{\mathbin{\raise0.5pt\hbox{$\scriptstyle\circ$}}}
\newcommand{\pkt}{\mathbin{\raise0.5pt\hbox{$\scriptstyle\bullet$}}}
\newcommand{\sq}{\mathbin{\raise0.5pt\hbox{$\scriptscriptstyle\square$}}}
\newcommand{\ck}{\checkmark}
\newcommand{\A}{\mathbb{A}}
\newcommand{\C}{\mathbb{C}}
\newcommand{\HH}{\mathbb{H}}
\newcommand{\E}{\mathbb{E}}
\newcommand{\Q}{\mathbb{Q}}
\newcommand{\R}{\mathbb{R}}
\renewcommand{\S}{\mathbb{S}}
\newcommand{\Z}{\mathbb{Z}}
\newcommand{\tr}{\mathop{\rm tr}}
\newcommand{\ad}{{\rm ad}}
\newcommand{\End}{{\rm End}}
\newcommand{\Der}{{\rm Der}}
\newcommand{\Inn}{{\rm Inn}}
\newcommand{\Iso}{{\rm Iso}}
\newcommand{\La}{\mathfrak{a}}
\newcommand{\Lg}{\mathfrak{g}}
\newcommand{\Ll}{\mathfrak{l}}
\newcommand{\Ln}{\mathfrak{n}}
\newcommand{\Lr}{\mathfrak{r}}
\newcommand{\Ls}{\mathfrak{s}}
\newcommand{\CO}{\mathcal{O}}
\newcommand{\abs}[1]{\lvert#1\rvert}
\newcommand{\al}{\alpha}
\newcommand{\be}{\beta}
\newcommand{\ga}{\gamma}
\newcommand{\de}{\delta}
\newcommand{\ep}{\varepsilon}
\newcommand{\ka}{\kappa}
\newcommand{\om}{\omega}
\newcommand{\Ga}{\Gamma}
\newcommand{\ra}{\rightarrow}
\renewcommand{\phi}{\varphi}
\begin{document}


\title[Crystallographic actions]{Crystallographic actions on Lie groups and post-Lie algebra structures}


\author[D. Burde]{Dietrich Burde}
\address{Fakult\"at f\"ur Mathematik\\
Universit\"at Wien\\
  Oskar-Morgenstern-Platz 1\\
  1090 Wien \\
  Austria}
\email{dietrich.burde@univie.ac.at}

\date{\today}

\subjclass[2000]{Primary 20H15, 17D99, Secondary 22E40}
\keywords{Crystallographic groups, Pre-Lie algebras, Post-Lie algebras}

\begin{abstract}
This survey on crystallographic groups, geometric structures on Lie groups and associated algebraic structures
is based on a lecture given in the Ostrava research seminar in $2017$.
\end{abstract}

\maketitle

\tableofcontents

\section{Introduction}

Crystallographic groups and crystallographic actions already have a long history. 
They were studied more than hundred years ago as the symmetry groups of crystals 
in three-dimensional Euclidean space and as wallpaper groups in two-dimensional Euclidean space.
Such groups are discrete and cocompact subgroups of the group of isometries of a Euclidean space.
After Hilbert asked in $1900$ about Euclidean crystallographic groups in his $18$th problem, 
Bieberbach solved
this question in $1910$. Since then Euclidean crystallographic structures are quite well understood, and
several other types of crystallographic structures have been considered, such as almost-crystallographic and 
affine crystallographic structures. For the affine case it was expected that the results from the Euclidean case
should generalize in a straightforward manner. This, however, turned out to be not the case. The Bieberbach theorems
do not hold. In particular, groups admitting an affine crystallographic action need not be virtually abelian.
However, it was conjectured that such groups are virtually polycyclic. This became known as Auslander's conjecture.
J. Milnor proved several results on affine crystallographic actions in his fundamental paper \cite{MIL} in $1977$. 
See also the discussion in \cite{GOL} by W. M. Goldman. This resulted in an active research on 
affine crystallographic actions and more generally on nil-affine crystallographic actions until today.
We want to give a survey on these developments and state some results which we have obtained in this context.
An important step here is to be able to formulate the problems on the level of Lie algebras and in terms of pre-Lie
algebra and post-Lie algebra structures. \\
This survey is by no means complete and there are several other interesting 
results in this area, which we do not mention.

\section{Euclidean crystallographic actions}

Let $E(n)$ denote the isometry group of the Euclidean space $\R^n$. This group is given 
by matrices as follows,
\[
E(n)=\left\{
\begin{pmatrix} A & v \\ 0 & 1 \end{pmatrix} \mid A\in O_n(\R), v\in \R^n\right\}.
\]

The multiplication is the usual matrix multiplication
\[
\begin{pmatrix} A & v \\ 0 & 1 \end{pmatrix}
\begin{pmatrix} B & w \\ 0 & 1 \end{pmatrix}
=\begin{pmatrix} AB & Aw+v \\ 0 & 1 \end{pmatrix}.
\]
The translations form a normal subgroup of $E(n)$, given by
\[
T(n)=\left\{
\begin{pmatrix} E_n & v \\ 0 & 1 \end{pmatrix} \mid  v\in \R^n\right\}.
\]
In particular we have  $E(n)\cong O_n(\R) \ltimes T(n) \cong  O_n(\R)\ltimes \R^n$.
The group $E(n)$ acts on $\R^n$ by
\[
\begin{pmatrix} A & b \\ 0 & 1 \end{pmatrix} \begin{pmatrix} v \\ 1 \end{pmatrix}=\begin{pmatrix} Av+ b \\ 1 \end{pmatrix}.
\]
More generally, the {\em affine group of the Euclidean space $\R^n$}, denoted by $A(n)$, is given as follows
\[
A(n)=\left\{
\begin{pmatrix} A & v \\ 0 & 1 \end{pmatrix} \mid A\in GL_n(\R), v\in \R^n\right\}.
\]
We will use the following definition of an Euclidean crystallographic group (ECG), which can be found 
in \cite{AUS2}, section $1$.

\begin{defi}\label{2.1}
A {\em Euclidean crystallographic group} $\Ga$ is a subgroup of $E(n)$ which is discrete and cocompact, i.e.,
has compact quotient $\R^n/\Ga$.
\end{defi}

Let $\Ga$ be an {\em ECG}. Then $\Ga$ acts {\em properly discontinuously} on $\R^n$, i.e., for all compact sets 
$K\subseteq \R^n$ the set of returns
\[
\{ \ga \in \Ga \mid \ga K\cap K\neq \emptyset \}
\]
is finite. 

\begin{defi}
Let $\Ga$ be a group acting on $\R^n$ via a homomorphism
\[
\rho\colon \Gamma \rightarrow E(n).
\]
The action is called a {\em crystallographic action}, if $\Gamma$ acts properly discontinuously on $\R^n$
and the orbit space $\R^n/\Gamma$ is compact, i.e., $\Ga$ acts cocompactly.
\end{defi}

\begin{ex}
The group $\Ga=\Z^n$ acts crystallographically on $\R^n$ by translations.
\end{ex} 

A homomorphism $\rho\colon \Gamma \rightarrow E(n)$ determines a crystallographic action if and only if the kernel of 
$\rho$ is finite, and the image of $\rho$ is a crystallographic group. \\[0.2cm]
As already said, the study of ECGs has a long history. ECGs in dimension $2$ are the $17$ wallpaper groups,
which have been known for several centuries. However the proof that the list was complete was only given in 
$1891$ by Fedorov, after the much more difficult classification in dimension $3$ had been completed 
by Fedorov and independently by Sch\"onflies in $1891$. There are $219$ distinct ECGs in dimension $3$.
All of them are realized as the symmetry group of genuine crystals.
Hilbert published in $1900$ his famous $23$ problems \cite{HIL}. In the first part of the $18$th problem
he asked, whether there are only finitely many different crystallographic groups in any dimension. This was answered 
affirmatively by Bieberbach \cite{BI1,BI2} in $1910$. His theorems are usually stated as follows.

\begin{prop}[Bieberbach 1]
Let $\Gamma \le E(n)$ be an Euclidean crystallographic group. Then $\Gamma$ contains the translation subgroup
$\Z^n$ as a normal subgroup with finite quotient $F=\Ga/\Z^n$.
\end{prop}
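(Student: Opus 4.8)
The plan is to carry out the classical argument of Bieberbach. It is convenient to split the statement into two parts: (i) the \emph{point group} $F:=\pi(\Gamma)$ is finite, where $\pi\colon E(n)\to O_n(\R)$ denotes the linear-part homomorphism $\left(\begin{smallmatrix}A&v\\ 0&1\end{smallmatrix}\right)\mapsto A$; and (ii) the translation subgroup $N:=\Gamma\cap T(n)=\ker\bigl(\pi|_\Gamma\bigr)$ is a lattice of full rank $n$ in $T(n)\cong\R^n$, hence isomorphic to $\Z^n$. Granting both, $N$ is normal in $\Gamma$ as the kernel of a homomorphism and $\Gamma/N\cong F$ is finite, which is precisely the assertion.

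First I would record the elementary structural facts. Since $T(n)$ is closed in $E(n)$ and $\Gamma$ is discrete, $N$ is a discrete subgroup of $T(n)\cong\R^n$ and is therefore free abelian of some rank $d\le n$. Writing $(A,v)$ for $\left(\begin{smallmatrix}A&v\\ 0&1\end{smallmatrix}\right)$, the conjugation identity $(A,v)(I,w)(A,v)^{-1}=(I,Aw)$ shows at once that $N\trianglelefteq\Gamma$ and that $F$ acts on the lattice $N$ through $GL(N)\cong GL_d(\Z)$.

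The heart of the proof is part (i). Since $O_n(\R)$ is compact and a discrete subgroup of a compact group is finite, it suffices to prove that $F$ is \emph{discrete} in $O_n(\R)$. This is the content of a lemma of Bieberbach, obtained by iterating commutators: for $A,B\in O_n(\R)$ the identity $AB-BA=(A-I)(B-I)-(B-I)(A-I)$ yields $\|[A,B]-I\|\le 2\,\|A-I\|\,\|B-I\|$ in the operator norm, so the linear part of a commutator is quadratically closer to $I$ than those of its factors. Feeding this into the identity $\bigl[(A,a),(I,w)\bigr]=(I,(A-I)w)$ and iterating, one shows that an element of $\Gamma$ whose linear part lies sufficiently close to $I$ produces, together with suitable translations, a sequence of nonzero elements of $\Gamma$ converging to the identity unless its linear part is already $I$; discreteness of $\Gamma$ then forces $\pi(\Gamma)$ to meet a fixed neighbourhood of $I$ only in $I$, i.e.\ $F$ is discrete. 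I expect this step to be the main obstacle: one must choose the translations to commute against and keep the commutator estimates under control \emph{before} the full-rank statement (ii) is available, and some care is needed to guarantee that the elements produced along the chain are genuinely nontrivial; cocompactness enters here as well.

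Finally, part (ii) follows from cocompactness once (i) is established. The finite group $\Gamma/N\cong F$ acts on $\R^n/N$ with quotient $\R^n/\Gamma$, which is compact by hypothesis; hence $\R^n/N$ is itself compact, so $N$ is a cocompact discrete subgroup of $\R^n$ and therefore has full rank $d=n$, i.e.\ $N\cong\Z^n$. Putting the pieces together, $\Gamma$ contains $N\cong\Z^n$ as a normal subgroup with finite quotient $F=\Gamma/\Z^n$, as claimed.
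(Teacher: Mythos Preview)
The paper does not give its own proof of this proposition: it is stated as a classical result with a reference to Bieberbach's original papers \cite{BI1,BI2}, and is then \emph{used} (not proved) in the discussion leading to the third Bieberbach theorem. So there is no in-paper argument to compare against.

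That said, your outline is the standard Bieberbach route and is sound in its architecture: split into (i) finiteness of the point group $F=\pi(\Gamma)$ via discreteness in the compact group $O_n(\R)$, and (ii) full rank of $N=\Gamma\cap T(n)$ from cocompactness once $|F|<\infty$. Part (ii) is unproblematic. For part (i) you correctly identify the crux and the caveat: the commutator estimate $\|[A,B]-I\|\le 2\|A-I\|\,\|B-I\|$ together with $[(A,a),(I,w)]=(I,(A-I)w)$ is the right engine, but to run it you first need a supply of short translations in $\Gamma$ to commute against, and you need to ensure the iterated elements stay nontrivial. In the classical proofs (Bieberbach, or the geometric version in Buser \cite{BUS}) this is arranged using cocompactness to produce elements with linear part near $I$ and bounded translational part, then showing such elements are pure translations; only after that does one bootstrap to get enough translations for the full argument. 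Your sketch acknowledges this dependency but does not resolve it, so as written it is a correct plan rather than a complete proof.
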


\begin{prop}[Bieberbach 2]
Two Euclidean crystallographic groups in dimension $n$ are isomorphic if and only if they are conjugated in the affine group
$A(n)\cong GL_n(\R)\ltimes \R^n$.
\end{prop}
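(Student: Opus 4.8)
\emph{Proof idea.} The plan is to prove both implications. One direction is immediate: if $\al\in A(n)$ satisfies $\al\Ga_1\al^{-1}=\Ga_2$, then conjugation by $\al$ is a group isomorphism $\Ga_1\to\Ga_2$. For the converse I would start from an abstract isomorphism $\phi\colon\Ga_1\to\Ga_2$ and show it is realised by conjugation with an affine map. Throughout, $L_i\cong\Z^n$ denotes the full-rank translation lattice of $\Ga_i$ furnished by the first Bieberbach theorem, so $L_i=\Ga_i\cap T(n)$ spans $\R^n$ and $F_i:=\Ga_i/L_i$ is finite.

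The first and crucial step is to prove that the translation lattice is \emph{characteristic}, i.e. $\phi(L_1)=L_2$. For this I would show that $L_i$ is the unique maximal abelian normal subgroup of $\Ga_i$: if $N\trianglelefteq\Ga$ is abelian and $\ga=\begin{pmatrix}A&v\\0&1\end{pmatrix}\in N$, then for every translation $\begin{pmatrix}I&w\\0&1\end{pmatrix}\in L$ the conjugate $\begin{pmatrix}I&w\\0&1\end{pmatrix}\ga\begin{pmatrix}I&w\\0&1\end{pmatrix}^{-1}$ again lies in $N$, and demanding that it commute with $\ga$ forces $(I-A)^2w=0$; since $L$ spans $\R^n$ this gives $(I-A)^2=0$, and as $A\in O_n(\R)$ is diagonalisable we conclude $A=I$, hence $\ga\in\Ga\cap T(n)=L$ and $N\subseteq L$. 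Thus every isomorphism must carry $L_1$ onto $L_2$. Consequently $\phi$ restricts to an isomorphism $L_1\to L_2$ of rank-$n$ free abelian groups; extending it $\R$-linearly produces $A\in GL_n(\R)$, and after replacing $\Ga_1$ by its conjugate under $\begin{pmatrix}A&0\\0&1\end{pmatrix}$ (a legitimate conjugation inside $A(n)$) we may assume $L_1=L_2=:L=\Z^n$ and $\phi|_L=\id_L$.

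Next I would pin down the holonomy. The projection to the linear part sends $\Ga_i$ onto a finite group $F_i$ with kernel $\Ga_i\cap T(n)=L$, and since $L$ spans $\R^n$ one checks directly that the centraliser of $L$ in $\Ga_i$ equals $L$, so $F_i\hookrightarrow\Aut(L)=GL_n(\Z)$ faithfully. Because $\phi|_L=\id_L$, comparing the conjugation actions of $\Ga_1$ and $\Ga_2$ on $L$ shows that $\phi$ identifies $F_1$ and $F_2$ as one and the same finite subgroup $F\le GL_n(\Z)$ and induces the identity of $F$. Hence $\Ga_1$ and $\Ga_2$ are two extensions of the \emph{same} $F$ by $L$ with the \emph{same} $F$-action, and $\phi$ exhibits them as the same class in $H^2(F,L)$.

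Finally, write $\Ga_i\subseteq\R^n\rtimes F$ and choose coset representatives $\ga_i(f)=(t_i(f),f)$ with $\ga_2(f)=\phi(\ga_1(f))$. Applying $\phi$ (which fixes $L$ pointwise) to the relations $\ga_i(f)\ga_i(g)=c_i(f,g)\,\ga_i(fg)$ shows the $2$-cocycles $c_i(f,g)=t_i(f)+f\cdot t_i(g)-t_i(fg)\in L$ coincide, so $d(f):=t_1(f)-t_2(f)$ is a $1$-cocycle in $Z^1(F,\R^n)$. Since $F$ is finite and $\R^n$ is a $\Q$-vector space, $H^1(F,\R^n)=0$, so $d(f)=f\cdot b-b$ for some $b\in\R^n$ (obtained by averaging). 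Conjugating $\Ga_1$ by the translation $\tau_b=\begin{pmatrix}I&b\\0&1\end{pmatrix}$ fixes $L$ pointwise and carries each $\ga_1(f)$ to $\ga_2(f)$; as $L$ together with the $\ga_i(f)$ generate $\Ga_i$, this yields $\tau_b\Ga_1\tau_b^{-1}=\Ga_2$. Composing with the linear conjugation from the second step exhibits $\phi$ as conjugation by an element of $A(n)$, as required. I expect the first step — that the translation lattice is characteristic — to be the main obstacle, since it is where the first Bieberbach theorem (full-rank lattice of finite index) is really used; once the holonomy and the cohomology class are determined, the remainder is the vanishing $H^1(F,\R^n)=0$ together with bookkeeping.
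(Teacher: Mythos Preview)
The paper is a survey and does not actually prove Bieberbach~2: it simply records the statement (with a reference to Bieberbach's original papers \cite{BI1,BI2}) and then moves on, giving a short sketch only for Bieberbach~3. So there is no proof in the paper to compare your argument against.

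That said, your proposal is a correct and complete outline of the standard cohomological proof. Each step checks out: your computation that $(I-A)^2w=0$ for all $w\in L$ is right and, together with the orthogonality of $A$, forces $A=I$, so the translation lattice is indeed the unique maximal abelian normal subgroup and hence characteristic; the reduction to $\phi|_L=\id$ via a linear conjugation is fine; the identification of the holonomies uses exactly that $L$ spans $\R^n$ so that the conjugation action on $L$ determines the linear part; and the final step is the vanishing $H^1(F,\R^n)=0$ for finite $F$, realised by the averaging coboundary. One small remark: it is worth noting explicitly that the auxiliary map $d(f)=t_1(f)-t_2(f)$ is independent of the choice of section $\gamma_1(f)$, since replacing $\gamma_1(f)$ by $\gamma_1(f)\tau$ with $\tau\in L$ shifts both $t_1(f)$ and $t_2(f)$ by the same lattice vector (using $\phi|_L=\id$). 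With that in place your argument is the textbook proof.
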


\begin{prop}[Bieberbach 3]
In each dimension there are only finitely many Euclidean crystallographic groups.
\end{prop}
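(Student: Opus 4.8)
The plan is to combine the two preceding Bieberbach theorems with two finiteness inputs: a bound of Minkowski on finite subgroups of $GL_n(\Z)$, sharpened by the Jordan--Zassenhaus theorem, and the finiteness of the group cohomology of a finite group with finitely generated coefficients.

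To each Euclidean crystallographic group $\Ga\le E(n)$ I would attach its \emph{holonomy data}. By Bieberbach~1 there is a short exact sequence
\[
1\longrightarrow\Z^n\longrightarrow\Ga\longrightarrow F\longrightarrow 1
\]
with $\Z^n=\Ga\cap T(n)$ the translation subgroup and $F=\Ga/\Z^n$ finite. Conjugation in $\Ga$ makes $\Z^n$ an $F$-module; since $T(n)$ is precisely the centralizer of $\Z^n$ in $E(n)$ and $\Ga\cap T(n)=\Z^n$, the induced \emph{holonomy representation} $\psi\colon F\to\Aut(\Z^n)=GL_n(\Z)$ is injective. Hence $\Ga$ is an extension of the finite group $F$ by the $\Z F$-module $\Z^n$ attached to a faithful $\psi$, and as an abstract group it is determined up to isomorphism by the finite group $F$, by the conjugacy class of $\psi$ in $GL_n(\Z)$, and by its class $[\Ga]\in H^2(F;\Z^n_\psi)$. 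Since by Bieberbach~2 abstractly isomorphic {\em ECG}s are conjugate in $A(n)$, it suffices to bound the number of such triples.

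Next I would bound the possibilities for $(F,\psi)$. By Minkowski's lemma the reduction homomorphism $GL_n(\Z)\to GL_n(\F_3)$ is injective on every finite subgroup, so $|F|$ divides $|GL_n(\F_3)|$; in particular only finitely many isomorphism types of finite groups $F$ can occur. For a fixed such $F$, an injection $\psi\colon F\hookrightarrow GL_n(\Z)$ amounts to a faithful $\Z F$-module structure on $\Z^n$, and the Jordan--Zassenhaus theorem asserts that the order $\Z F$ in the semisimple algebra $\Q F$ admits, up to isomorphism, only finitely many lattices of any prescribed $\Z$-rank; hence only finitely many $\psi$ up to conjugacy in $GL_n(\Z)$. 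This Jordan--Zassenhaus input --- finiteness of the isomorphism classes of integral representations of a finite group in a fixed dimension --- is the step I expect to carry the real weight of the argument, the rest being comparatively formal.

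Finally, for each of the finitely many pairs $(F,\psi)$ there are only finitely many extensions, since group extensions of $F$ by the module $\Z^n_\psi$ are classified up to equivalence by $H^2(F;\Z^n_\psi)$, and this abelian group is finite: it is finitely generated (as $F$ is finite and $\Z^n$ is finitely generated) and annihilated by $|F|$. Assembling the three steps, only finitely many triples $(F,\psi,[\Ga])$ arise, so by Bieberbach~2 there are only finitely many Euclidean crystallographic groups in each dimension $n$. This completes the proposed argument.
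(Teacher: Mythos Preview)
Your argument is correct and follows essentially the same route as the paper: reduce via Bieberbach~1 to a short exact sequence $1\to\Z^n\to\Ga\to F\to 1$ with faithful $F$-action on $\Z^n$, invoke the Jordan--Zassenhaus finiteness of conjugacy classes of finite subgroups of $GL_n(\Z)$, and then use finiteness of $H^2(F,\Z^n)$. The only cosmetic differences are that you split the Jordan--Zassenhaus input into Minkowski's order bound plus the lattice-counting statement, and you justify $|H^2(F,\Z^n)|<\infty$ via the $|F|$-torsion argument rather than the paper's identification $H^2(F,\Z^n)\cong H^1(F,\Q^n/\Z^n)$.
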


Let $\Ga \le E(n)$ be an ECG. By the first Bieberbach Theorem, the translation subgroup is an abelian subgroup of finite index, 
isomorphic to the full lattice $\Z^n$ in $\R^n$. Hence we have a short exact sequence
\[
1\ra \Z^n\ra \Ga \ra F\ra 1
\]
with a finite group $F\cong\Gamma/\Z^n$ acting by conjugation of $\Z^n$. We obtain a faithful representation
\[
F\hookrightarrow \Aut(\Z^n)=GL_n(\Z),
\]
so that we may consider $F$ as a finite subgroup in $GL_n(\Z)$ up to conjugation. Now $GL_n(\Z)$ has only finitely many 
conjugacy classes of finite subgroups. This was first shown by Jordan, then by Zassenhaus and more generally later by 
Harish-Chandra for arithmetic groups. Hence we have only finitely many such groups $F$ up to isomorphism with given 
action of $F$ on $\Z^n$. Furthermore there are only finitely many inequivalent extensions $1\ra \Z^n\ra \Ga \ra F\ra 1$, 
since the extension classes are classified by the group
\[
H^2(F,\Z^n)\cong H^1(F,\Q^n/\Z^n),
\]
which is discrete and compact, hence finite. This yields Bieberbach's third Theorem. \\[0.2cm]
Zassenhaus \cite{ZAS} showed that every ECG arises as an exact sequence as above 
and gave an algorithm yielding the $n$-dimensional ECGs up to affine equivalence, given the finitely many 
conjugacy classes of finite subgroups $F$ in $GL_n(\Z)$ together with their normalizers.

\begin{prop}
For a given finite group $F \le GL_n(\Z)$ the isomorphism classes of crystallographic groups $\Gamma$ with
conjugacy class represented by $F$ are in bijection with the orbits of the normalizer $N_{GL_n(\Z)}(F)$ on the
finite group $H^2(F,\Z^n)$. 
\end{prop}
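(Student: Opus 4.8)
The plan is to present the classical extension-theoretic argument, in which the only genuinely geometric point is that the translation lattice is a characteristic subgroup of a Bieberbach group; everything else is bookkeeping in the cohomology of the finite group $F$. Fix once and for all the $\Z F$-module $M=\Z^n$ determined by the inclusion $F\le GL_n(\Z)$, so that equivalence classes of group extensions $1\to M\to\Gamma\to F\to 1$ inducing this module structure are classified by $H^2(F,M)$, a finite group by the remark following the third Bieberbach theorem; write $\Gamma_\alpha$ for a group underlying the extension with class $\alpha$. Given any crystallographic group $\Gamma$ whose point group $\Gamma/\Z^n$ is $GL_n(\Z)$-conjugate to $F$, I would choose $g\in GL_n(\Z)$ with $g(\Gamma/\Z^n)g^{-1}=F$ and conjugate $\Gamma$ inside $A(n)$ by $\begin{pmatrix} g & 0\\ 0 & 1\end{pmatrix}$: since $g\in GL_n(\Z)$ this preserves $\Z^n$, it does not change the abstract isomorphism type of $\Gamma$, and it turns $\Gamma$ into an extension of $F$ by $M$ inducing the fixed module structure, so $\Gamma\cong\Gamma_\alpha$ for some $\alpha\in H^2(F,M)$. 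Conversely every $\Gamma_\alpha$ does act crystallographically: the image of $\alpha$ in $H^2(F,\R^n)=0$ vanishes, so $\Gamma_\alpha$ embeds affinely in $\R^n\rtimes F\subseteq A(n)$, and averaging an inner product over $F$ conjugates this into $E(n)$, where $\Gamma_\alpha$ acts properly discontinuously and cocompactly because it contains the lattice $\Z^n$ with finite index (this is the Zassenhaus realization recalled above). Hence $\alpha\mapsto[\Gamma_\alpha]$ is a surjection from $H^2(F,M)$ onto the set of isomorphism classes of crystallographic groups with point group in the class of $F$, and the task is to show that its fibres are exactly the orbits of $N:=N_{GL_n(\Z)}(F)$.

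The easy inclusion — each $N$-orbit lies in one fibre — goes as follows. For $g\in N$ the automorphism $c_g\colon f\mapsto gfg^{-1}$ of $F$ together with the $\Z$-linear automorphism $g$ of $M$ form a compatible pair, and hence induce by transport of cocycles an automorphism $g^{*}$ of $H^2(F,M)$; as $g$ ranges over $N$ these maps give the action of $N$ in the statement. If $\alpha'=g^{*}\alpha$ for some $g\in N$, then conjugating the extension underlying $\Gamma_\alpha$ inside $A(n)$ by $\begin{pmatrix} g & 0\\ 0 & 1\end{pmatrix}$ — which preserves $\Z^n$, sends the point group $F$ to $gFg^{-1}=F$, and replaces the extension class by $g^{*}\alpha=\alpha'$ — produces a group isomorphic to $\Gamma_{\alpha'}$, so $\Gamma_\alpha\cong\Gamma_{\alpha'}$. (The same computation shows that the orbit $N\cdot\alpha$ attached to a given $\Gamma$ does not depend on the choice of conjugating element $g$, since those elements form a single coset of $N$.)

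The substantial inclusion is the reverse: if $\Gamma_{\alpha_1}\cong\Gamma_{\alpha_2}$ then $\alpha_1$ and $\alpha_2$ lie in one $N$-orbit. Here the key lemma is that $\Z^n$ is characteristic in each such $\Gamma$. Indeed, if $A\trianglelefteq\Gamma$ is abelian of finite index, then $A\cap\Z^n$ has finite index in $\Z^n$; the conjugation action of $A$ on $\Z^n$ factors through the image of $A$ in $F\le GL_n(\Z)$, so that image fixes the finite-index sublattice $A\cap\Z^n$ pointwise; but a matrix in $GL_n(\Z)$ fixing a finite-index sublattice pointwise is the identity, whence $A\subseteq\ker(\Gamma\to F)=\Z^n$. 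Thus $\Z^n$ is the unique maximal abelian normal subgroup of finite index, hence characteristic. Consequently any isomorphism $\phi\colon\Gamma_{\alpha_1}\to\Gamma_{\alpha_2}$ restricts to an automorphism $g\in GL_n(\Z)$ of $\Z^n$ and descends to an automorphism $\bar\phi$ of $F$; comparing through $\phi$ the conjugation actions of $\Gamma_{\alpha_i}$ on $\Z^n$ forces $\bar\phi(f)=gfg^{-1}$ for all $f\in F$, so $g\in N$ and $\bar\phi=c_g$. By the functoriality of $H^2$ under such compatible pairs, $\phi$ then identifies $\alpha_2$ with $g^{*}\alpha_1$, so $\alpha_1$ and $\alpha_2$ are in one $N$-orbit. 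Together with the previous paragraph this identifies the fibres of $\alpha\mapsto[\Gamma_\alpha]$ with the $N$-orbits, giving the asserted bijection. The one step that is not routine cohomology of finite groups is the characteristicity of the translation lattice, and that is precisely where the first Bieberbach theorem and the faithfulness of $F\hookrightarrow GL_n(\Z)$ enter.
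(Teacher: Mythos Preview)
The paper is a survey and does not prove this proposition; it is stated immediately after attributing the result to Zassenhaus \cite{ZAS} and is not followed by any argument. Your proof is correct and is precisely the classical extension-theoretic argument that underlies Zassenhaus's algorithm: the surjection $H^2(F,\Z^n)\to\{\text{isomorphism classes}\}$ via realization in $A(n)$ (using $H^2(F,\R^n)=0$), and the identification of fibres with $N_{GL_n(\Z)}(F)$-orbits via the characteristicity of the translation lattice. The key step --- that $\Z^n$ is the unique maximal abelian normal subgroup of finite index because any element of $F$ fixing a full-rank sublattice must be the identity --- is handled cleanly, and the compatibility computation forcing $\bar\phi=c_g$ with $g\in N$ is the right way to pin down the normalizer action.
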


In $1978$ the classification in dimension $4$ was achieved in \cite{BBNWZ}.

\begin{prop}[Zassenhaus et al. 1978]
There are exactly $4783$ different crystallographic groups in four-dimensional space $\R^4$.
\end{prop}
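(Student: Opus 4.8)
The plan is to produce $4783$ as the output of the Zassenhaus algorithm --- the bookkeeping scheme made precise in the previous proposition --- run in dimension $n=4$. By Bieberbach's first theorem every four-dimensional crystallographic group $\Ga$ fits into an extension $1\ra\Z^4\ra\Ga\ra F\ra 1$ with $F\le GL_4(\Z)$ a finite group acting on $\Z^4$ in the tautological way, and conversely every finite $F\le GL_4(\Z)$ occurs, since the split extension $\Z^4\rtimes F$ is already crystallographic. Combining this with Bieberbach's second theorem (isomorphism $=$ affine conjugacy) and the previous proposition, the number of isomorphism classes of four-dimensional crystallographic groups equals
\[
\sum_{[F]} \bigl| H^2(F,\Z^4)/N_{GL_4(\Z)}(F) \bigr|,
\]
the sum running over the conjugacy classes $[F]$ of finite subgroups of $GL_4(\Z)$, of which there are finitely many by the Jordan--Zassenhaus theorem invoked above. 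It remains to make each ingredient of the sum effective and to carry out the arithmetic.

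First I would enumerate the conjugacy classes of finite subgroups of $GL_4(\Z)$; in dimension four there are $710$ of them, the \emph{arithmetic crystal classes}. One builds this list hierarchically rather than by brute force: classify the $\Q$-conjugacy classes of finite subgroups of $GL_4(\Q)$ (the $227$ geometric crystal classes); inside each, determine the $\Z$-classes of invariant lattices, a computation controlled by the arithmetic of the relevant orders (genus theory, ideal class groups); single out the maximal finite subgroups; and descend through all subgroups up to $\Z$-conjugacy. Second, for each representative $F$ one computes the finite abelian group $H^2(F,\Z^4)$ together with its $N_{GL_4(\Z)}(F)$-module structure, say from an explicit free resolution of $\Z$ over $\Z F$, or via the isomorphism $H^2(F,\Z^4)\cong H^1(F,\Q^4/\Z^4)$ recorded earlier. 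Third, one computes the normalizer $N_{GL_4(\Z)}(F)$, its induced action on $H^2(F,\Z^4)$ --- on which the subgroup $F$ itself acts trivially, inner automorphisms being invisible to cohomology --- and the number of orbits. Summing the orbit counts over the $710$ classes yields $4783$.

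The main obstacle is not conceptual but one of scale: the $710$ classes, the resolutions and cohomology computations, the normalizers and their orbit actions amount to a volume of explicit integer linear algebra that is wholly infeasible by hand, which is exactly why the four-dimensional classification --- unlike the classical two- and three-dimensional ones --- was obtained by machine (the tables of Brown, B\"ulow, Neub\"user, Wondratschek and Zassenhaus). Two further points are worth flagging. First, one should verify that the same procedure reproduces the classical values $17$ and $219$ in dimensions two and three, as a check on the implementation. Second, the count depends on the equivalence chosen: $4783$ is the number of affine-conjugacy classes, equal to the number of isomorphism classes by Bieberbach's second theorem; if orientation-reversing affine maps are disallowed the total rises to $4894$, the discrepancy being the $111$ pairs of four-dimensional space groups that are mirror images of each other but not properly affinely equivalent.
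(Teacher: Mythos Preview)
The paper does not prove this proposition at all: it is stated as a quotation of the computer classification carried out in \cite{BBNWZ}, with no argument supplied beyond the general description of Zassenhaus' algorithm given just before. Your outline is exactly that algorithm, with the correct intermediate numbers ($227$ geometric classes, $710$ arithmetic classes, $111$ enantiomorphic pairs), and you rightly identify the result as a machine computation rather than a hand proof; so your proposal matches the paper's approach, only more explicitly than the paper itself.
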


In $2000$ Plesken and Schulz \cite{PLS} classified all ECGs in dimension $5$ and $6$. There are $222018$ different
ECGs in dimension $5$ and $28927922$ different ECGs in dimension $6$.

\begin{ex}
The group $GL_2(\Z)$ has exactly $13$ different conjugacy classes of finite subgroups, called arithmetic ornament classes.
Zassenhaus' algorithm yields $17$ ECGs up to isomorphism.
\end{ex}

It is easy to see that the $13$ arithmetic ornament classes are given as follows:
\begin{align*}
C_1 & \cong \left\langle \begin{pmatrix} 1 & 0 \\ 0 & 1 \end{pmatrix} \right\rangle,\; 
C_2 \cong \left\langle \begin{pmatrix} -1 & 0 \\ 0 & -1 \end{pmatrix} \right\rangle,\;
C_3 \cong \left\langle \begin{pmatrix} 0 & 1 \\ -1 & -1 \end{pmatrix} \right\rangle, \;
C_4  \cong \left\langle \begin{pmatrix} 0 & 1 \\ -1 & 0 \end{pmatrix} \right\rangle, \; \\
C_6 & \cong \left\langle \begin{pmatrix} 0 & 1 \\ -1 & 1 \end{pmatrix} \right\rangle, \;
D_1 \cong \left\langle \begin{pmatrix} 1 & 0 \\ 0 & -1 \end{pmatrix} \right\rangle, \;
D_1 \cong \left\langle \begin{pmatrix} 0 & 1 \\ 1 & 0 \end{pmatrix} \right\rangle, \;
D_2 \cong  \left\langle \begin{pmatrix} 1 & 0 \\ 0 & -1 \end{pmatrix},\begin{pmatrix} -1 & 0 \\ 0 & -1 \end{pmatrix}
\right\rangle, \\
D_2 & \cong  \left\langle \begin{pmatrix} 0 & 1 \\ 1 & 0 \end{pmatrix},\begin{pmatrix} -1 & 0 \\ 0 & -1 \end{pmatrix}
\right\rangle, \; D_3 \cong \left\langle \begin{pmatrix} 0 & 1 \\ 1 & 0 \end{pmatrix},
\begin{pmatrix} 0 & 1 \\ -1 & -1 \end{pmatrix}\right\rangle, \;
D_3  \cong  \left\langle \begin{pmatrix} 0 & -1 \\ -1 & 0 \end{pmatrix},\begin{pmatrix} 0 & 1 \\ -1 & -1 \end{pmatrix}
\right\rangle, \\
D_4 & \cong  \left\langle \begin{pmatrix} 0 & 1 \\ 1 & 0 \end{pmatrix},\begin{pmatrix} 0 & 1 \\ -1 & 0 \end{pmatrix}
\right\rangle, \; D_6\cong  \left\langle \begin{pmatrix} 0 & 1 \\ 1 & 0 \end{pmatrix},
\begin{pmatrix} 0 & 1 \\ -1 & 1 \end{pmatrix}\right\rangle. 
\end{align*}
The groups are isomorphic to one of the cyclic groups $C_1,C_2,C_3,C_4,C_6$, or one of the dihedral groups 
$D_1,D_2,D_3,D_4,D_6$. The conjugacy classes are finer than the isomorphism classes.
The indices $1,2,3,4,6$ are no coincidence here. An element $A\in GL_2(\Z)$ of finite order has one of the orders
$1,2,3,4,6$. Indeed, if there is an element $A\in GL_2(\Z)$ of order $n$, then $\phi(n)$, the degree of
the irreducible cyclotomic polynomial $\Phi_n$ divides $2$ by Cayley-Hamilton. But $\phi(n)\mid 2$ is equivalent
to $n=1,2,3,4,6$. The wallpaper groups $\Ga$ arise from these $13$ arithmetic ornament classes by equivalence classes of 
extensions $1\ra \Z^2\ra \Ga\ra F\ra 1$, determined by $H^2(F,\Z^2)$. For each of these classes we can compute this group.
In case that $H^2(F,\Z^2)=0$ the ornament class just yields one extension. This happens in $10$ cases. In the other
three cases $H^2(F,\Z^2)$ is isomorphic to $\Z/2$, $\Z/2$ and $\Z/2\times \Z/2$ respectively. This yields $2+2+4=8$
further extensions. Altogether we obtain $18$ inequivalent extensions leading to $17$ different groups. 

\begin{rem}
Denote by $c(n)$ the number of different ECGs in $E(n)$. Peter Buser \cite{BUS} showed in $1985$, using Gromov's work on
almost flat manifolds, the estimate
\[
c(n)\le e^{e^{4n^2}}.
\]
This bound seems to be not yet optimal, but I haven't found better estimates. Schwarzenberger \cite{SCH} has shown that 
$c(n)$ grows at least as fast as $2^{n^2}$ and conjectured that this is the exact asymptotic result. This seems to be
still open.
\end{rem}

By Bieberbach's first Theorem the translation group of an ECG  is an abelian subgroup of finite index. Hence
every ECG is virtually abelian. We can reformulate the structure results by Bieberbach as follows.

\begin{prop}\label{2.11}
The groups admitting a Euclidean crystallographic action are precisely the finitely generated virtually abelian groups. 
For a given group the crystallographic action is unique up to affine conjugation.
\end{prop}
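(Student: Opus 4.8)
The plan is to prove the two inclusions of the asserted equality of classes of groups together with the uniqueness clause, in each case reducing to the Bieberbach theorems by exploiting the maximal finite normal subgroup. For \emph{necessity}, suppose $\rho\colon\Ga\to E(n)$ is a crystallographic action; by the characterisation recalled above its kernel $K$ is finite and its image $\Delta=\rho(\Ga)$ is an ECG, so by Bieberbach~1 the group $\Delta$ contains $\Z^n$ as a finite-index normal subgroup and is in particular finitely generated and virtually abelian, whence $\Ga$ (a finite extension of $\Delta$) is finitely generated too. For the ``virtually abelian'' part I would pass successively to the finite-index subgroups $\Ga_1=\rho^{-1}(\Z^n)$ and $\Ga_2=C_{\Ga_1}(K)$ (finite index since $\Aut(K)$ is finite); in $\Ga_2$ the group $Z(K)=\Ga_2\cap K$ is finite and central, so $\Ga_2$ is a central extension $1\ra Z(K)\ra\Ga_2\ra\Z^m\ra1$, and if $e$ denotes the exponent of $Z(K)$ then the alternating commutator pairing $\Z^m\times\Z^m\ra Z(K)$ vanishes on $e\Z^m$, so the preimage of $e\Z^m$ in $\Ga_2$ is an abelian subgroup of finite index in $\Ga$.

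For \emph{sufficiency}, let $\Ga$ be finitely generated and virtually abelian. Intersecting the finitely many conjugates of a finite-index abelian subgroup gives a finite-index normal abelian $C\trianglelefteq\Ga$, which is finitely generated, hence $C\cong\Z^n\oplus T$ with torsion subgroup $T$ finite and characteristic in $C$. Let $\Lambda$ be the subgroup generated by all finite normal subgroups of $\Ga$: it is normal, it is finite (any finite subgroup meets $C$ trivially, hence has order at most $[\Ga:C]$), and it is the largest finite normal subgroup. Put $\ov\Ga=\Ga/\Lambda$; the image of $T$ in $\ov\Ga$ is trivial, so $\ov\Ga$ has a normal finite-index subgroup isomorphic to $\Z^n$, giving $1\ra\Z^n\ra\ov\Ga\ra F\ra1$ with $F$ finite acting through $\phi\colon F\ra GL_n(\Z)$. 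Since every finite subgroup of $GL_n(\R)$ is conjugate into $O_n(\R)$, choose $P\in GL_n(\R)$ with $\psi:=P\phi(\cdot)P^{-1}$ taking values in $O_n(\R)$, and seek a homomorphism $\widetilde\rho\colon\ov\Ga\ra O_n(\R)\ltimes\R^n=E(n)$ with linear part $\psi$ and restricting on $\Z^n$ to $m\mapsto(E_n,Pm)$. Choosing a set-theoretic section of $\ov\Ga\ra F$ reduces the existence of $\widetilde\rho$ to solving $\de u=P\om$ for some $u\in C^1(F,\R^n)$, where $\om\in Z^2(F,\Z^n)$ is the extension class and $P$ is regarded as an $F$-equivariant map $(\Z^n,\phi)\ra(\R^n,\psi)$; this is solvable because $H^2(F,\R^n)=0$, as $F$ is finite and $\R^n$ a $\Q$-vector space. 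The kernel of $\widetilde\rho$ meets $\Z^n$ trivially, so it is finite, so it is trivial by maximality of $\Lambda$; and the image contains the lattice $P\Z^n$ with finite index, hence is discrete and cocompact. Thus $\ov\Ga$ is an ECG and $\Ga\twoheadrightarrow\ov\Ga\hookrightarrow E(n)$ is a crystallographic action of $\Ga$, with kernel $\Lambda$.

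For \emph{uniqueness} I would first observe that an ECG $\Delta\le E(n)$ has no nontrivial finite normal subgroup: if $(A,v)$ generated one, conjugating it by a translation $(E_n,w)$ with $w\in\Delta\cap\R^n=\Z^n$ yields $(A,v+(E_n-A)w)$, and finiteness of the orbit forces $A=E_n$, whereupon $(E_n,v)$ lies in the torsion-free group $\Z^n$ and is trivial. Consequently the kernel of any crystallographic action of $\Ga$ is forced to equal $\Lambda$, so every such action is $\Ga\twoheadrightarrow\ov\Ga$ followed by a faithful realisation of $\ov\Ga$ as an ECG; the ambient dimension equals the torsion-free rank (Hirsch length) of $\ov\Ga$, an invariant of $\Ga$, and by Bieberbach~2 any two faithful ECG-realisations of $\ov\Ga$ in $E(n)$ are conjugate in $A(n)=GL_n(\R)\ltimes\R^n$, so conjugating the two given actions by the same affine transformation identifies them.

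The step I expect to be the main obstacle is the construction of $\widetilde\rho$ in the sufficiency part: it requires simultaneously ``orthogonalising'' the holonomy representation $\phi$ (the averaging-a-positive-definite-form argument) and correcting the translational parts so as to reproduce the extension class of $\ov\Ga$ inside $E(n)$ — and this correction is exactly what the vanishing $H^2(F,\R^n)=0$ buys. Checking afterwards that the kernel is trivial and the image discrete and cocompact is routine, as is the group-theoretic lemma invoked in the necessity part.
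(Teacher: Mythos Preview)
The paper does not actually prove this proposition: it is presented there as a reformulation of Bieberbach's three theorems, preceded only by the one-line remark that Bieberbach~1 forces every ECG to be virtually abelian. Your argument is therefore far more detailed than anything in the paper, and it is essentially correct. The necessity step (passing to the centraliser of the finite kernel and killing the commutator pairing on a sublattice), the Zassenhaus-type construction in the sufficiency step via $H^2(F,\R^n)=0$ after orthogonalising the holonomy, and the uniqueness reduction through the maximal finite normal subgroup together with Bieberbach~2 are all sound; the observation that an ECG has no nontrivial finite normal subgroup is the right bridge, and your conjugation-by-translations argument for it is clean.

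One small slip to fix: in the sufficiency part you write ``any finite subgroup meets $C$ trivially, hence has order at most $[\Ga:C]$'', but $C\cong\Z^n\oplus T$ may itself have torsion, so a finite subgroup of $\Ga$ meets $C$ inside $T$ rather than trivially. The order bound becomes $|T|\cdot[\Ga:C]$, still uniform; since the product of two finite normal subgroups is again a finite normal subgroup, one of maximal order contains all the others, and $\Lambda$ is the unique maximal finite normal subgroup as you need. The rest of your argument is unaffected.
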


\begin{defi}
An Euclidean crystallographic group $\Ga\le E(n)$ is called a {\em Bieberbach group}, if it is torsionfree, i.e., if it acts 
freely on $\R^n$.
\end{defi}

If $M = \R^n/\Ga$ is a compact complete connected flat Riemannian manifold, then its fundamental group $\pi_1(M)\cong \Ga$ 
is a Bieberbach group. Conversely, every flat complete Riemannian manifold $M$ is 
the quotient $\R^n/\Ga$ for a subgroup $\Ga\le E(n)$ acting freely and properly discontinuously on $\R^n$. This shows the 
geometric importance of Bieberbach groups. Among the $17$ wallpaper groups, there are just $2$ Bieberbach groups, namely the 
fundamental group of the torus and the Klein bottle. Among the $219$ space groups there are only $10$ Bieberbach groups.
In dimension $4,5,6$ we have $74,1060,38746$ Bieberbach groups, so also these numbers grow rapidly.

\section{Hyperbolic and spherical  crystallographic actions}

ECGs have been generalized to non-Euclidean crystallographic groups, namely to spherical and hyperbolic
crystallographic groups. We will shortly explain the notions and give a few examples, but we will not attempt to
give a survey. Let $X$ be a {\em space of constant curvature} $X$, i.e., a simply-connected complete
Riemannian manifold of constant curvature $\ka$ up to scaling, together with its isometry group $G=\Iso(X)$. 
Any space of constant curvature is isomorphic to either the Euclidean space $(\E^n,O_n(\R)\ltimes \R^n)$ with $\ka=0$, or 
to the sphere $(\S^n,O^+(n+1))$ with $\ka=1$, or to the hyperbolic space $(\HH^n,O^+(n+1))$ with $\ka=-1$. Here $O^+(n,1)$
is the index $2$ subgroup of $O(n,1)$ preserving the two connected components of $\{A \in \R^{n+1}\mid \langle 
A,A\rangle=-1\}$. Then Definition $\ref{2.1}$ is generalized as follows.

\begin{defi}
Let $(X,G)$ be a space of constant curvature. A subgroup $\Ga\le G=\Iso(X)$ is called a {\em crystallographic group}, or CG,
if $\Ga$ is discrete and $X/\Ga$ has finite volume.
\end{defi}

Any discrete subgroup $\Ga\le \Iso(X)$ has a convex fundamental domain. So for an ECG any fundamental domain is bounded
since any unbounded convex domain in Euclidean space has infinite volume. Hence any CG in $\E^n$ is cocompact and thus
an ECG. This shows that both definitions coincide for ECGs. Any CG in $\S^n$ is a discrete subgroup of a compact group
$O_n(\R)$ and hence finite. So spherical CRs are finite subgroups of $O_n(\R)$. For small $n$, all finite subgroups of
$O_n(\R)$ are classified. For example, any finite subgroup of $O_2(\R)$ is either cyclic or dihedral. For higher $n$ this is
not the case. A special case of the Margulis lemma implies that for each $n$, there is a positive integer $m(n)$ such that any 
finite subgroup of $O_n(\R)$ has an abelian subgroup of index $m(n)$, see Corollary 4.2.4 of Thurston's book \cite{THU}.
The most interesting case of non-Euclidean CGs is the hyperbolic case. Already in dimension $2$ there is a continuum of CGs,
even of cocompact ones. The latter arise as fundamental groups of closed surfaces of genus $g>1$. their totality can be 
described via Teichm\"uller theory.

\begin{ex}
Let $X=\HH^2$ be the upper half-plane, $G=\Iso(X)\cong {\rm PSL}_2(\R)$ and $\Ga$ be the modular group consisting
of transformations of the form
\[
z\mapsto \frac{az+b}{cz+d}, \quad \begin{pmatrix} a & b \\ c & d \end{pmatrix}
\in SL_2(\Z)
\]
Then $\Ga$ is a non-cocompact hyperbolic CG with ${\rm vol} (\HH^2/\Ga)=\frac{\pi}{3}$.
\end{ex}

Another example is given by Bianchi groups.
\begin{ex}
Let $d$ be a positive squarefree integer and $K=\Q(\sqrt{-d})$ an imaginary-quadratic number field. Denote by $\CO_d$ 
its ring of integers in $K$. Let $\Ga(d)={\rm PSL}_2(\CO_d)\subset {\rm PSL}_2(\C)$. Then $\Ga(d)$ is a discrete subgroup 
of $\Iso(\HH^3)$, called a Bianchi group. It is a non-compact hyperbolic CG.
\end{ex}
In fact, the covolume of $\Ga(d)$ is given by
\[
{\rm vol}(\HH^3/\Ga(d))=\frac{\abs{d_K}^{3/2}}{4\pi^2}\zeta_K(2),
\]
where $d_K$ denotes the discriminant of $K$ and $\zeta_K(s)$ denotes the Dedekind zeta function of the base field 
$K= \mathbb{Q}(\sqrt{-d})$. \\[0.2cm]
There is a general method of constructing arithmetic discrete subgroups of semisimple Lie groups due to Margulis.
On the other hand, there exist also non-arithmetic hyperbolic CGs in any dimension \cite{GRP}. By {\em Mostow rigidity},
any isomorphism of hyperbolic CGs in $\HH^n$ for $n\ge 3$ is induced by a conjugation in the group $\Iso(\HH^n)$.
This is far from being true for $n=2$, see above. An important numerical invariant of a hyperbolic CG is its covolume
\[
v(\Ga)={\rm vol}(\HH^n/\Ga).
\]
For $n\ge 4$ the set of covolumes is discrete and for $n=3$ it is a non-discrete closed well-ordered set of order-type 
$\om^{\om}$, where each point has finite multiplicity. The covolumes are bounded from below by a positive
constant depending only on $n$. There is much more to say, but we will finish this section by referring to related topics
such as {\em lattices in Lie groups}, {\em Fuchsian groups} and {\em Kleinian groups}. A {\em Fuchsian group} is a discrete 
subgroup of ${\rm PSL}_2(\R)$ and a {\em Kleinian group} is a discrete subgroup of ${\rm PSL}_2(\C)$.

\section{Affine and nil-affine crystallographic actions}

We are mainly interested in this survey in another generalization of Euclidean crystallographic groups, namely in affine and
nil-affine crystallographic groups. Let $X$ be a locally compact topological Hausdorff space and $G$ the group of 
homeomorphisms of $X$.

\begin{defi}
A subgroup $\Ga$ of $G$ is called {\em crystallographic}, if $G$ acts properly discontinuously and cocompactly on $X$.
A continuous action of a group $\Gamma$ on $X$ is called a {\em crystallographic action}, if it is properly discontinuous and 
cocompact.
\end{defi}

For $X$ being the affine space $\A^n$ and $G=A(n)$, a crystallographic group $\Ga$ is called an ACG, an {\em affine 
crystallographic group}. For $X=\E^n$, a group $\Ga \le G=E(n)$ acts properly discontinuously on $X$ if and only if
$\Ga$ is discrete. In general acting properly discontinuously is stronger than being discrete. The group $A(n)$ is
a generalization of the Euclidean isometry group $E(n)$ as we have seen in section $2$ in the context of the Bieberbach
theorems. As in the Euclidean case, torisonfree ACGs arise as fundamental groups of flat manifolds, i.e., of complete
compact affinely flat manifolds. A natural question in this context is whether the Bieberbach theorems hold for ACGs.
Looking at some examples it is clear that this is not the case.

\begin{ex}
Let $k$ be a fixed integer. The group
\[
\Ga_k= \Bigg\{\begin{pmatrix} 
1 & kc & 0 & \vrule & ka \\
0 & 1  & 0 & \vrule & kb \\
0 & 0  & 1 & \vrule & kc \\
\hline \\[-0.48cm]
0 & 0  & 0 & \vrule & 1 
\end{pmatrix}
\mid a,b,c\in \Z \Bigg\}
\le A(3)
\]
is a $3$-dimensional ACG, which is not virtually abelian. Because of
\[
\Ga_k/[\Ga_k,\Ga_k]\cong \Z^2\oplus \Z/k\Z 
\]
two groups $\Ga_k$ and $\Ga_{k'}$ are isomorphic if and only if $k=k'$. Hence there are infinitely many different
ACGs in dimension $3$.
\end{ex}

Indeed, every abelian subgroup of finite index in $\Ga_k$ would necessarily be isomorphic to $\Z^3$, but is is easy to see 
that $\Ga_k$ does not contain such a subgroup. So ACGs need not be virtually abelian. On the other hand, all such
examples are virtually solvable and then, being a discrete solvable subgroup of a Lie group with finitely many components,
already virtually {\em polycyclic}. Is this a possible generalization of Bieberbach's First Theorem, i.e., is it
true that every ACG is virtually polycyclic? In other words, is the fundamental group of every complete compact affine 
manifold virtually polycylic? 
L. Auslander studied this problem and published a paper \cite{AUS2} in $1964$ stating an even more general result, namely
that the fundamental group of every complete affine manifold is virtually polycyclic, without the compactness
assumption. Unfortunately his proof was in error. Nevertheless the statement later on became widely known as 
{\rm the Auslander conjecture}:

\begin{con}[Auslander]
Every ACG is virtually polycyclic.
\end{con}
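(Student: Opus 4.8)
The plan is to push the problem down to linear algebraic groups and Lie algebras and then, following Milnor \cite{MIL}, to reduce it to the (non)existence of free subgroups. Write $A(n)=GL_n(\R)\ltimes\R^n$ with linear-part homomorphism $\ell\colon A(n)\to GL_n(\R)$, and let $\Ga\le A(n)$ be an ACG; being a properly discontinuous cocompact group of transformations of $\A^n$, it is finitely generated. Since $\Ga$ is a discrete subgroup of the Lie group $A(n)$, which has finitely many components, it suffices to show that $\Ga$ is \emph{virtually solvable}: a discrete virtually solvable subgroup of such a Lie group is automatically virtually polycyclic (Mostow). Now $\Ga$ is finitely generated and linear over a field of characteristic zero, so by the Tits alternative either $\Ga$ is virtually solvable --- and we are done --- or $\Ga$ contains a nonabelian free subgroup. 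Thus the entire content of the conjecture is the statement: \emph{a group acting properly discontinuously and cocompactly by affine transformations of $\A^n$ cannot contain a free subgroup of rank two.} Cocompactness is indispensable here --- Margulis's examples of nonabelian free groups acting properly discontinuously, but not cocompactly, by affine transformations show that it fails otherwise.

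Next I would build a manageable algebraic and analytic envelope around $\Ga$. After replacing $\Ga$ by a finite-index subgroup, pass to the Zariski closure $\mathbf G$ of $\Ga$ in $A(n)$ and to $\mathbf L=\overline{\ell(\Ga)}\le GL_n(\R)$. If $\mathbf L$ is virtually solvable then so is $\Ga$ and we are done, so assume $\mathbf L$ is not virtually solvable; then the identity component of $\mathbf L$ has a nontrivial semisimple Levi factor. On the geometric side, the Fried--Goldman technique produces a closed ``syndetic hull'' $H\le A(n)$ with finitely many components that contains $\Ga$ cocompactly; its identity component $H^0$ then acts on $\A^n$ with an open orbit, so the Lie algebra $\Lh$ of $H^0$ carries a complete left-symmetric (pre-Lie) structure --- this is where pre-Lie algebras enter the picture. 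The hope is that the two constraints together --- $\mathbf L$ large and semisimple on one side, $\Ga$ sitting cocompactly in $H$ on the other --- are incompatible.

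The crux, and the hard part, is to derive a contradiction with proper discontinuity by a ping-pong / expanding-dynamics argument in the spirit of Abels, Margulis and Soifer. Since the Levi factor of $\mathbf L$ is a noncompact semisimple group, after passing to a suitable exterior power of $\R^n$ one finds elements of $\ell(\Ga)$ that act \emph{proximally} --- with an attracting line and a complementary repelling hyperplane --- and one can arrange a ``transverse'' pair of such elements. The strategy is then to form a sequence $\ga_k\in\Ga$ of words in these two elements with exponents tuned so that the linear parts uniformly contract a fixed large ball $B$ while the accumulated translation vectors stay bounded; this would give $\ga_k B\cap B\neq\emptyset$ for all $k$, contradicting proper discontinuity. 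The obstacle --- and precisely the reason the Auslander conjecture is still open --- is the control of the translational parts: one must rule out that the relevant additive cocycle (a Margulis-invariant type quantity) always conspires to translate orbits off to infinity, and extracting from cocompactness a uniform bound that forbids this has so far been achieved only in low dimensions (e.g. $n\le 6$) and for special classes of linear parts. A dimension-free argument would require a genuinely new handle on this cocycle.
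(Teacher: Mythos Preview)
There is nothing to compare against: in the paper this statement is a \emph{conjecture}, not a theorem, and the paper gives no proof. Immediately after stating it the paper says explicitly that ``Auslander's conjecture is still open, although many special cases are known,'' and then surveys the partial results (Fried--Goldman for $n\le 3$, Tomanov for $n\le 5$, the Abels--Margulis--Soifer work on special linear parts and their withdrawn $n\le 6$ preprint).

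Your write-up is not a proof either, and to your credit you say so yourself: you correctly isolate the reduction via the Tits alternative to ruling out nonabelian free subgroups, you bring in the syndetic hull and the Abels--Margulis--Soifer proximality machinery, and then you name the actual obstruction --- controlling the translational cocycle uniformly --- and concede that ``a dimension-free argument would require a genuinely new handle on this cocycle.'' That is an accurate summary of the state of the art, but it is a strategy sketch with an acknowledged gap, not a proof. So the honest verdict is: the paper offers no proof because none is known, and your proposal does not close that gap; it reproduces the known reductions and stops exactly where everyone else does.
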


The history of this conjecture is as follows. In $1977$ J. Milnor studied the fundamental groups of flat affine manifolds in 
his famous paper \cite{MIL}. He proved that every torsion-free virtually polycyclic group can be realized as the fundamental 
group of some complete flat affine manifold. Then he conjectured also the converse, namely Auslander's statement that the 
fundamental group of every complete flat affine manifold is virtually polycyclic. However, Margulis \cite{MAR} found a 
counterexample in dimension $3$. 

\begin{prop}[Margulis]
There exists non-compact complete affine manifolds in dimension $3$ with a free non-abelian fundamental group
of rank $2$.
\end{prop}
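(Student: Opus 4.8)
The plan is to construct, following Milnor's reduction and then Margulis's explicit recipe, a free, properly discontinuous affine action of a rank-$2$ free group $\Ga_0$ on $\R^3$: the quotient $M=\R^3/\rho(\Ga_0)$ is then a complete affinely flat $3$-manifold with $\pi_1(M)\cong\Ga_0$, and it is forced to be non-compact, since $M$ is aspherical (a $K(\Ga_0,1)$) while free groups have cohomological dimension $1<3$, whereas a closed aspherical $3$-manifold would force $\mathrm{cd}(\Ga_0)=3$. That the free group is the natural test case is Milnor's observation: by the Tits alternative applied to the linear parts of an ACG — the kernel of the linear part map being the abelian group of translations lying in $\Ga$ — a group acting freely and properly by affine maps that is not virtually polycyclic must have non virtually solvable linear part, hence contains a nonabelian free subgroup, which inherits the (still proper) action. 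I would build such an action inside the Lorentz group $\mathrm{O}(2,1)\ltimes\R^3\subset A(3)$, identifying $\R^3=\mathfrak{sl}_2(\R)$ equipped with (a multiple of) its Killing form, so that the linear part becomes $\mathrm{SO}(2,1)^{\circ}\cong\mathrm{PSL}_2(\R)$ acting by $\mathrm{Ad}$.

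For the construction, first fix the linear part: take a Schottky group $\Ga_0=\langle g_1,g_2\rangle\le\mathrm{PSL}_2(\R)$, free of rank $2$ and convex cocompact, so that every nontrivial element is loxodromic, i.e.\ acts on $\R^{2,1}$ with eigenvalues $e^{\ell(\ga)},1,e^{-\ell(\ga)}$ for some $\ell(\ga)>0$. For such $\ga$ let $x^{0}(\ga)$ denote the spacelike unit eigenvector for the eigenvalue $1$, with its sign pinned down so that the eigenvector frame $(x^{-}(\ga),x^{0}(\ga),x^{+}(\ga))$ is positively oriented. An affine deformation of $\Ga_0$ is a homomorphism $\rho_u(\ga)=(\ga,u(\ga))$ with $u\in Z^1(\Ga_0,\R^{2,1})$ a cocycle; since $\Ga_0$ is free, $u$ may be prescribed arbitrarily on $g_1,g_2$, and the deformation space modulo translational conjugacy is $H^1(\Ga_0,\R^{2,1})\cong\R^3$. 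The key gadget is the \emph{Margulis invariant} $\alpha_u(\ga):=\langle u(\ga),x^{0}(\ga)\rangle$, measuring the signed Lorentzian displacement of $\rho_u(\ga)$ along its invariant neutral line; one checks it is a conjugacy invariant, depends only on the class of $u$ in $H^1$, and satisfies $\alpha_u(\ga^{-1})=\alpha_u(\ga)$ and $\alpha_u(\ga^{k})=|k|\,\alpha_u(\ga)$.

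Next I would make the invariants uniformly positive. Margulis's \emph{Opposite Sign Lemma} gives the necessary condition — if $\rho_u(\Ga_0)$ acts properly then all $\alpha_u(\ga)$ with $\ga\neq e$ have the same sign, since from $\alpha_u(\ga_1)>0>\alpha_u(\ga_2)$ one can produce a sequence $\ga_1^{m}\ga_2^{n}$ moving a fixed point a bounded amount, contradicting properness — and I would realise the converse. Choose $u$ with $\alpha_u(g_1),\alpha_u(g_2)>0$. Using a no-cancellation estimate for Schottky normal forms — for $\ga$ written as a reduced word, $\alpha_u(\ga)$ equals the sum of the syllable contributions up to an error controlled by the geometry of the disjoint Schottky half-spaces, an error that tends to $0$ as the translation lengths $\ell(g_i)$ grow — and replacing $g_1,g_2$ by sufficiently high powers (equivalently passing to a Schottky group of large systole), one obtains $\Ga_0$ and $u$ with $\inf_{\ga\neq e}\alpha_u(\ga)=:2\ep>0$.

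Finally, properness. One shows $\rho_u(\Ga_0)$ acts properly discontinuously on $\R^3$; the action is automatically free because a loxodromic affine transformation with nonzero Margulis invariant has no fixed point. The transparent route is Drumm's fundamental-domain argument: attach to each generator an affine polyhedral wedge bounded by pieces of lightcones and spacelike planes positioned via the eigendirections $x^{\pm}(g_i),x^{0}(g_i)$ and the now-large invariants, and run a ping-pong argument to see that the complement of the four wedges is a fundamental domain whose $\rho_u(\Ga_0)$-translates are locally finite — exactly proper discontinuity (which also yields discreteness). Equivalently one proves Margulis's displacement estimate directly: for $p$ in a fixed compact set and $\ga$ of Schottky length $n$, the Lorentzian pseudonorm of $\rho_u(\ga)p-p$ is bounded below in terms of $\alpha_u(\ga)\ge 2\ep n$, so only finitely many $\ga$ return a given compact set. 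Then $M=\R^3/\rho_u(\Ga_0)$ is the desired non-compact complete affine $3$-manifold with $\pi_1(M)\cong\Ga_0$ free of rank $2$. The main obstacle is exactly these two quantitative points — the combinatorial estimate ensuring that syllable contributions to $\alpha_u$ do not cancel along reduced words, so that a lower bound on the generators propagates to the uniform bound $2\ep$, and the passage from ``$\alpha_u$ uniformly bounded below'' to genuine proper discontinuity of the affine action; exhibiting a free subgroup of $\mathrm{SO}(2,1)\ltimes\R^3$ at all is by contrast immediate, and the whole content of the theorem lies in making the $\R^3$-action properly discontinuous.
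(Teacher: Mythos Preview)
The paper does not give a proof of this proposition at all: it is stated as Margulis's result with a citation to \cite{MAR}, and the surrounding text only draws the consequence that a free nonabelian group is not virtually polycyclic, so the compactness hypothesis in Auslander's conjecture is essential. There is therefore nothing in the paper to compare your argument against.

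That said, your outline is a faithful sketch of the Margulis construction and its later refinements. The linear part in $\mathrm{SO}(2,1)^{\circ}$ taken to be a Schottky group, the affine deformation encoded by a cocycle $u\in Z^1(\Ga_0,\R^{2,1})$, the Margulis invariant $\alpha_u(\ga)=\langle u(\ga),x^0(\ga)\rangle$, the Opposite Sign Lemma as a necessary condition for properness, and the passage from a uniform lower bound on $\alpha_u$ to proper discontinuity are all the right ingredients. One historical remark: Drumm's crooked-plane fundamental domains postdate Margulis's original paper, which obtains properness by a direct displacement estimate closer to your ``equivalently'' clause; either route works. Your identification of the two genuinely delicate steps --- propagating positivity of $\alpha_u$ from generators to all reduced words, and deducing properness from the uniform bound --- is accurate, and these are exactly where a full write-up would need care. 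The non-compactness argument via cohomological dimension is correct and is the standard way to see that such quotients cannot be closed.
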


A free non-abelian group cannot be virtually polycyclic. So it is clear that one needs the compactness assumption and 
Auslander's original claim cannot hold. This led to the formulation of the Auslander conjecture in terms of of affine 
crystallographic groups. \\
Auslander's conjecture is still open, although many special cases are known. Fried and Goldman proved the conjecture in 
$1983$ in dimension $n\le 3$. Tomanov \cite{TOM} proved it in $2016$ for $n\le 5$ and Abels, Margulis and Soifer \cite{AMS}
have worked on several cases for many years. In $2005$ they showed that every crystallographic subgroup
$\Ga\le A(n+2)$ with linear part contained in $O(n,2)$ is virtually polycyclic \cite{AMS2}.
They have a proof for Auslander's conjecture in dimension $6$ available on the arXiv. However, they have withdrawn it now. \\[0.2cm]
In his paper \cite{MIL} Milnor also asked the following important question.

\begin{qu}[Milnor 1977]\label{4.5}
Does every virtually polycyclic group admit an affine crystallographic action?
\end{qu}

Actually, the original question uses the terminology of left-invariant affine structures on Lie groups, see section $6$.
Because of some positive evidence this question was also sometimes called the {\em Milnor conjecture}.
A positive answer for both Milnor and Auslander would give a very nice algebraic description 
of the class of groups admitting an affine crystallographic action. In fact, then this class would be precisely the class
of virtually polycyclic groups and we would have a perfect analogue to Proposition $\ref{2.11}$ concerning the Euclidean case.
Moreover, it is known that an affine crystallographic action of a virtually polycyclic group is unique up to conjugation 
with a polynomial diffeomorphism of $\R^n$. \\
However, Y. Benoist found a counterexample to Milnor's conjecture in \cite{BEN} and we provided families of counterexamples 
in \cite{BU3,BU5}. The counterexamples in \cite{BU5} are torsion-free nilpotent groups of Hirsch length $11$ and 
nilpotency class $10$ not admitting an affine crystallographic action.
Hence one needs to replace $A(n)$ by a larger group for such a correspondence to hold.
Indeed, other alternatives have been proposed. First it was shown that the group $P(n)$ of polynomial diffeomorphisms
of $\R^n$ is a possible alternative. In \cite{DEI} it was shown that any virtually polycyclic group admits a polynomial 
crystallographic action of bounded degree. However, this group appears to be too large and does not have such a geometric
meaning as $E(n)$ and $A(n)$ have. A more natural generalization of $A(n)=\Aff(\R^n)$ turned out to be
the group $\Aff(N)=\Aut(N)\ltimes N$, the group of {\em nil-affine transformations}, in this context. 
Here $N$ denotes a connected and simply-connected nilpotent Lie group. For the abelian Lie group $N=\R^n$ we recover the
group $A(n)$. We repeat the definition of a crystallographic action for $\Aut(N)$.

\begin{defi}
A {\em nil-affine crystallographic action} consists of a representation $\rho\colon \Ga \ra \Aff(N)$
for some connected and simply connected nilpotent Lie group $N$ letting $\rho$ act properly discontinuously and
cocompactly on $N$. The image $\rho(\Ga)$ of such an nil-affine crystallographic action will be referred to as an 
{\em nil-affine crystallographic group}.
\end{defi}

In $2003$ K. Dekimpe showed the following result in \cite{DEK}.

\begin{prop}
Every  virtually polycyclic group $\Ga$ admits a nil-affine crystallographic action $\rho\colon \Ga\ra \Aff(N)$. 
This action is unique up to conjugation inside of $\Aff(N)$.
\end{prop}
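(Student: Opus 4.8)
The plan is to derive the existence statement from two structural facts about polycyclic‑by‑finite groups — Mostow's realisation of a finite‑index subgroup as a lattice in a solvable Lie group, and the nilshadow construction — and then to pass from that subgroup to all of $\Ga$ by a group‑extension argument carried out \emph{inside} $\Aff(N)$; the uniqueness statement will be extracted from the canonicity of the $\Q$‑algebraic hull of $\Ga$. \emph{Step 1 (syndetic hull).} By a theorem of Mostow (see \cite{DEK} and the references there), $\Ga$ has a finite‑index normal subgroup $\Lambda$ which is isomorphic to a lattice in a connected, simply connected solvable Lie group $S$; since lattices in such groups are torsion‑free with Hirsch length equal to $\dim S$, we get $\dim S=h(\Lambda)=h(\Ga)=:n$. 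Fix such an identification $\Lambda\le S$.

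\emph{Step 2 (nilshadow and extension over the quotient).} Recall that $S$ carries a nilshadow: a connected, simply connected nilpotent Lie group $N$ with $\dim N=\dim S$, together with a closed embedding $\iota\colon S\hookrightarrow\Aff(N)=\Aut(N)\ltimes N$ under which $S$ acts simply transitively on $N$; moreover the nilshadow is functorial, so every $\varphi\in\Aut(S)$ induces $\bar\varphi\in\Aut(N)$ with $\bar\varphi\,\iota(s)\,\bar\varphi^{-1}=\iota(\varphi(s))$. Put $F=\Ga/\Lambda$. Conjugation in $\Ga$ gives an action of $F$ on $\Lambda$ by automorphisms, and by Mostow rigidity for solvable lattices these extend to $S$, producing an abstract kernel $F\to\mathrm{Out}(S)$. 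Since $S$ is simply connected solvable, $Z(S)$ is a vector group, so $H^{2}(F,Z(S))=H^{3}(F,Z(S))=0$ ($F$ finite, coefficients uniquely divisible); by the classical obstruction theory for group extensions with nonabelian kernel this abstract kernel is realised by an honest action $\psi\colon F\to\Aut(S)$, and $\Ga$ embeds into $\widehat S:=S\rtimes_{\psi}F$ as a discrete subgroup with $\Ga\cap S=\Lambda$ and $\Ga S=\widehat S$. Composing $\iota$ with the $F$‑action $\psi$ followed by $\bar{(\,\cdot\,)}$ embeds $\widehat S$ into $\Aff(N)$; restricting gives the desired $\rho\colon\Ga\hookrightarrow\widehat S\hookrightarrow\Aff(N)$, with $\dim N=n=h(\Ga)$.

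\emph{Step 3 (the action is crystallographic).} The group $\widehat S$ acts on $N$ transitively (already $S$ does) with stabiliser $F$ at the base point; as $F$ is compact, the $\widehat S$‑action is proper. Since $\Lambda$ is discrete in $S$ and $[\Ga:\Lambda]<\infty$, $\Ga$ is discrete in $\widehat S$, whence the $\Ga$‑action on $N$ is properly discontinuous. Finally $\rho(\Ga)\backslash N$ is the image of the compact space $\rho(\Lambda)\backslash N\cong\Lambda\backslash S$ under the natural finite‑to‑one surjection, hence compact. This proves existence. I would quote Mostow's embedding (with the sharp dimension $\dim S=h(\Ga)$) and the nilshadow as known inputs; the step I expect to be the real obstacle is Step 2, namely arranging the passage from $\Lambda$ to $\Ga$ \emph{without changing $N$} — realising $F$ by automorphisms of the $n$‑dimensional nilshadow compatibly with $\iota$ and with the extension class of $\Ga$ — which is exactly what the functoriality of the nilshadow and the vanishing of $H^{*}(F,Z(S))$ are for.

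\emph{Uniqueness.} Let $\rho_i\colon\Ga\to\Aff(N_i)$, $i=1,2$, be nil‑affine crystallographic actions. Restricting to a torsion‑free finite‑index $\Lambda\trianglelefteq\Ga$, the action on $N_i$ is free — a torsion‑free group acting properly discontinuously and cocompactly by polynomial diffeomorphisms acts freely — so $N_i/\rho_i(\Lambda)$ is a closed aspherical manifold with fundamental group $\Lambda$; since $\Lambda$ is a Poincar\'e‑duality group of dimension $h(\Lambda)$, we get $\dim N_1=\dim N_2=h(\Ga)=:n$. Because $\Aut(N_i)$ acts polynomially on $N_i\cong\R^{n}$, the group $\Aff(N_i)$ is a real linear algebraic group, and one verifies that the Zariski closure $\mathbf H_i$ of $\rho_i(\Ga)$ in $\Aff(N_i)$ is an algebraic hull of $\Ga$: Zariski density is automatic, and the essential condition $C_{\mathbf H_i}(\mathbf U_i)\subseteq\mathbf U_i$ on the unipotent radical $\mathbf U_i$ follows from proper discontinuity together with the fact that $\mathbf U_i$ acts on $N_i$ like a group of translations. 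By the canonicity of the $\Q$‑algebraic hull there is an isomorphism $\mathbf H_1\to\mathbf H_2$ intertwining $\rho_1$ and $\rho_2$; unwinding the nilshadow description of these hulls, it restricts to an isomorphism $N_1\to N_2$ conjugating $\rho_1$ to $\rho_2$, which is the asserted uniqueness up to conjugation in $\Aff(N)$. The delicate point here, mirroring Step 2, is the verification of $C_{\mathbf H_i}(\mathbf U_i)\subseteq\mathbf U_i$ directly from the crystallographic hypothesis.
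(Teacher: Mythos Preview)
The paper itself does not prove this proposition; it is a survey and simply attributes the result to Dekimpe \cite{DEK}. So there is no proof here to compare against, and I assess your argument on its own terms.

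Your outline is broadly the right shape, but Step~2 contains a concrete error: the assertion that $Z(S)$ is a vector group whenever $S$ is simply connected solvable is false. Take $S=\widetilde{E(2)}=\R\ltimes_{\mathrm{rot}}\R^{2}$; this is simply connected and solvable, it contains the lattice $\Z^{3}$ generated by $(2\pi,0)$, $(0,e_{1})$, $(0,e_{2})$, yet a direct computation gives $Z(S)=2\pi\Z\times\{0\}\cong\Z$. Hence $H^{2}(F,Z(S))$ and $H^{3}(F,Z(S))$ need not vanish for finite $F$, and your obstruction-theoretic embedding of $\Ga$ into $S\rtimes_{\psi}F$ is not justified as written. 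A repair along your lines is to carry out the extension step inside $\Aff(N)$ (equivalently, inside the real algebraic hull of $\Lambda$) rather than inside $S$: there the unipotent radical genuinely has vector-group centre and the cohomological vanishing you want does hold. This is in fact closer to how the result is proved in \cite{DEK}. Your uniqueness sketch is nearer the mark, but the step you yourself flag --- showing $C_{\mathbf H_{i}}(\mathbf U_{i})\subseteq\mathbf U_{i}$ directly from the crystallographic hypothesis --- is the substantive content of that half of the theorem and cannot be left as an assertion; it requires an honest argument that a nontrivial reductive centraliser would force a noncompact isotropy group, contradicting properness.
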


It is now also natural to ask for the converse, i.e., to ask for the Auslander conjecture for nil-affine crystallographic groups.

\begin{con}[Generalized Auslander]
Let $N$ be a connected and simply connected nilpotent Lie group and let
$\Gamma \subseteq \Aff(N)$ be a group acting crystallographically on $N$.
Then $\Gamma$ is virtually polycyclic.
\end{con}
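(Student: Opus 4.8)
\emph{Towards a proof.} The plan is to reduce the conjecture, via the Tits alternative, to the problem of excluding a noncompact semisimple linear part --- which is exactly the obstruction that keeps the classical Auslander conjecture open. A complete proof is therefore not within reach of current methods, but the reduction is robust and isolates precisely what must be done.

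First I would record the easy structural facts. Since $N$ is connected, simply connected and nilpotent, $\Aut(N)\cong\Aut(\Ln)$ is a linear algebraic group and $N$ is unipotent, so $\Aff(N)=\Aut(N)\ltimes N$ is (the group of real points of) a linear algebraic group and in particular a Lie group with finitely many connected components; $\Gamma$ is discrete in $\Aff(N)$ because proper discontinuity is stronger than discreteness, and it is finitely generated by the Milnor--\v{S}varc lemma applied to the proper cocompact action on $N$. By the Tits alternative, $\Gamma$ is either virtually solvable or contains a nonabelian free subgroup. In the first case $\Gamma$ is a discrete solvable subgroup of a Lie group with finitely many components, hence virtually polycyclic, and we are done. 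So assume $\Gamma$ contains a nonabelian free subgroup $F$, and aim for a contradiction.

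Let $\ell\colon\Aff(N)\to\Aut(N)$ be the projection onto the linear part. Since $N$ is nilpotent, $F\cap N$ --- the kernel of $\ell|_F$ --- is a nilpotent normal subgroup of $F$, hence cyclic, hence trivial; so $\ell(\Gamma)$ contains $F$, is not virtually solvable, and its Zariski closure $G$ in $\Aut(\Ln)$ has a noncompact semisimple part $S$. There are now two complementary tasks. The first is to \emph{linearise}: filtering $\Ln$ by its lower central series and passing to the associated graded Lie algebra $\mathrm{gr}\,\Ln$ turns the $S$-action into a genuine linear representation on a graded vector space, and --- provided one verifies that proper discontinuity and cocompactness survive the corresponding grading (contraction) deformation of the action --- this would reduce the Generalized Auslander conjecture to the classical affine one, in fact prove them equivalent. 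The second task, granting that reduction, is to feed the problem into the affine machinery: the Fried--Goldman analysis of the algebraic hull (which produces a connected solvable syndetic hull when $\Gamma$ is virtually solvable), the displacement / Margulis-invariant estimates of Abels--Margulis--Soifer, and the established ranges (Fried--Goldman in dimension $\le 3$, Tomanov in dimension $\le 5$), to conclude that $S$ cannot occur.

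The main obstacle is this last point: ruling out a noncompact semisimple --- and especially a rank-one, e.g.\ $SO(n,1)$ --- linear part under the hypothesis of a \emph{cocompact} proper action. That cocompactness is essential is shown by Margulis's counterexample, whose linear part is Zariski-dense in $SO(2,1)$ and acts properly but not cocompactly; so any argument must genuinely exploit the compact fundamental domain, presumably through a recurrence-versus-escape estimate that plays the contracting behaviour of elements of $S$ against the return statistics forced by cocompactness. Making such an estimate uniform requires controlling how the automorphisms in $S$ distort the natural Carnot--Carath\'eodory geometry of $N$, and it is exactly this uniform control --- the same difficulty as in the open cases of the classical conjecture --- that I expect to be the sticking point. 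Accordingly a full proof seems realistic only under restrictions: a bound on $\dim N$, or structurally simple $N$ such as two-step nilpotent or filiform groups, where $\Aut(\Ln)$ and hence the candidates for $S$ can be listed explicitly.
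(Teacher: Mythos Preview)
The statement you are addressing is recorded in the paper as an open \emph{conjecture}; no proof is given. What the paper does note, in the sentence immediately following the conjecture, is that the case $\dim N\le 5$ has been settled and that for $2$-step nilpotent $N$ the generalized conjecture reduces to the classical Auslander conjecture \cite{BU19}. Your write-up is explicitly a strategy rather than a proof, and you are right that a complete argument is currently out of reach.

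Your Tits-alternative reduction is the standard opening move and is sound: $\Aff(N)$ is the group of real points of a linear algebraic group, $\Gamma$ is finitely generated (the cleanest justification is Selberg's lemma plus compactness of the quotient manifold, rather than Milnor--\v{S}varc as usually stated, which requires an isometric action), and a discrete virtually solvable subgroup of a Lie group with finitely many components is virtually polycyclic --- exactly the fact the paper itself invokes in the affine case. The injectivity of $\ell|_F$ via the observation that a nilpotent normal subgroup of a nonabelian free group is trivial is also correct.

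Where your outline goes beyond what is established is the associated-graded step. You propose that contracting $N$ to the nilpotent group on $\mathrm{gr}\,\Ln$ should carry a crystallographic action to a crystallographic affine action on $\R^n$, thereby reducing the generalized conjecture to the classical one in full generality. The paper claims this reduction only when $N$ is $2$-step nilpotent; for higher nilpotency class the contraction is a genuine limit, and neither proper discontinuity nor cocompactness is known to survive it. You flag this with ``provided one verifies'', but that verification is itself an open problem rather than a routine check --- so this step is an additional conjecture, not a reduction. Your sketch thus correctly isolates the obstruction (a noncompact semisimple linear part, with the rank-one case the hardest) and lines up with what the paper records as known, but it does not advance beyond that, and there is no proof in the paper to compare it against.
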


If this conjecture has a positive answer, then we have an analogue of Proposition $\ref{2.11}$ for nil-affine 
crystallographic actions. Then the groups admitting a nil-affine crystallographic action would be precisely the
virtually polycyclic groups. \\[0.2cm]
We have shown in \cite{BU19} that the generalized Auslander conjecture is true for $n\le 5$ and that
it can be reduced to the ordinary Auslander conjecture in case $N$ is $2$-step nilpotent.

\section{Simply transitive groups of affine and nil-affine transformations}

Affine and nil-affine crystallographic actions of discrete groups are closely related to simply transitive actions
by affine and nil-affine transformations of Lie groups.

\begin{defi}
A group $G$ acts {\em simply transitively} on $\R^n$ by affine transformations if there is a homomorphism 
$\rho\colon G \ra A(n)$ letting $G$ act on $\R^n$, such that for all $y_1,y_2\in \R^n$ there is a unique 
$g\in G$ such that $\rho(g)(y_1)=y_2$.
\end{defi}

Such groups are connected and simply connected $n$-dimensional Lie groups which are homeomorphic to $\R^n$.
L. Auslander named such groups {\em simply transitive groups of affine motions}. He proved that such groups
are solvable \cite{AUS3}. We mention the following generalization of this result.

\begin{prop}
Let $G$ be a Lie group which is homeomorphic to $\R^n$ for some $n\ge 1$. If $G$ admits a faithful linear representation 
then $G$ is solvable.
\end{prop}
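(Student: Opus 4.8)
The plan is to use the Levi--Malcev decomposition to isolate a semisimple factor, and then show that this factor must be trivial because it is simultaneously linear (hence has finite center) and contractible (hence has no compact part). First I would reduce to the case that $G$ is connected and simply connected: $G$ is connected since $\R^n$ is, and simply connected since $\R^n$ is. Write $\Lg = \Ls \ltimes \Lr$ for a Levi--Malcev decomposition, with $\Lr$ the radical and $\Ls$ a semisimple Levi factor. Because $G$ is simply connected, the connected subgroups $R=\langle\exp\Lr\rangle$ and $S=\langle\exp\Ls\rangle$ are closed, $R$ is normal and solvable, $S$ is semisimple, $G=S\ltimes R$, and the multiplication map $S\times R\to G$ is a diffeomorphism. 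As a simply connected solvable Lie group, $R$ is diffeomorphic to some $\R^k$, so $S$ is a manifold with $S\times\R^k\cong\R^n$; in particular $S$ is a \emph{contractible} connected semisimple Lie group, being a retract of $\R^n$. It therefore suffices to prove $\Ls=0$, for then $G=R$ is solvable. Note that the restriction to $S$ of the given faithful representation $\rho\colon G\hookrightarrow GL_m(\R)$ (after restriction of scalars we may assume it is real) is again faithful.

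The key step -- and the only place where linearity is used -- is to deduce that the center $Z(S)$ is finite. The differential $d\rho|_{\Ls}$ is injective, so $d\rho(\Ls)$ is a semisimple Lie subalgebra of $\mathfrak{gl}_m(\R)$, hence algebraic by Chevalley's theorem; let $\underline S\le GL_m$ be the connected algebraic subgroup with Lie algebra $d\rho(\Ls)$. In characteristic zero a connected algebraic group with semisimple Lie algebra is a semisimple algebraic group, and $\rho(S)$ is a connected Lie subgroup of $GL_m(\R)$ with the same Lie algebra as the identity component $\underline S(\R)^{\circ}$, so $\rho(S)=\underline S(\R)^{\circ}$. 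The identity component of the real points of a semisimple algebraic group has finite center, and since $\rho$ is injective and carries $Z(S)$ into $Z(\rho(S))$, we conclude that $Z(S)$ is finite. Equivalently, one may invoke the classical fact that a connected semisimple Lie group is linear if and only if its center is finite; the universal cover $\widetilde{SL_2(\R)}\cong\R^3$, which is homeomorphic to Euclidean space but not solvable, shows that this hypothesis is genuinely needed.

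To finish, a connected semisimple Lie group $S$ with finite center has finitely many components, so by the Cartan--Iwasawa--Mostow theorem it deformation retracts onto a maximal compact subgroup $K$. Since $S$ is contractible, so is $K$; but $K$ is a compact connected Lie group, hence a closed orientable manifold, so if $\dim K>0$ then Poincar\'e duality forces $H^{\dim K}(K;\Z)\cong\Z\ne 0$, contradicting contractibility. Hence $K=\{e\}$, so the maximal compactly embedded subalgebra $\mathfrak{k}$ of $\Ls$ is zero; the Iwasawa decomposition $\Ls=\mathfrak{k}\oplus\La\oplus\Ln$ then gives $\Ls=\La\oplus\Ln$, which is solvable, whence $\Ls=0$. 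Therefore $G=R$ is solvable.

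I expect the finite-center step to be the main obstacle to present cleanly: it is exactly what separates linear groups from groups like $\widetilde{SL_2(\R)}$, and everything else is either topology (contractibility annihilates the maximal compact subgroup) or standard structure theory. An alternative to the Chevalley-algebraicity argument is to pass to the Zariski closure $H$ of $\rho(G)$ in $GL_m$ and analyze the reductive part of its Levi decomposition, but I find the route through the semisimple subalgebra more direct.
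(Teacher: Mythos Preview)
Your proof is correct and has the same underlying shape as the paper's---contractibility forbids nontrivial compact subgroups, while linearity forces them in the non-solvable case---but the two arguments invoke different structural theorems to implement this. The paper first applies the Malcev--Iwasawa theorem directly to $G$ (so $G\cong C\times\R^k$ with $C$ maximal compact, hence $C=\{e\}$), then uses a structure theorem for linear connected Lie groups asserting a decomposition $G=B\ltimes H$ with $H$ reductive and $B$ simply connected solvable, and finishes by reducing $H$ in two steps (quotient by the compact center, passage to the adjoint group) to a centerless semisimple group, which always has a nontrivial compact subgroup. You instead take the Levi decomposition first, show that the semisimple factor $S$ is contractible as a retract of $\R^n$, and use linearity only to establish that $Z(S)$ is finite via Chevalley's algebraicity theorem; the Cartan decomposition then retracts $S$ onto its maximal compact $K$, contractibility forces $K=\{e\}$, and the Iwasawa decomposition of $\Ls$ finishes. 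Your route trades the somewhat black-box linear structure theorem $G=B\ltimes H$ for a more explicit finite-center argument; the paper's route is shorter once that theorem is granted. Both make the role of the linearity hypothesis equally transparent, and your $\widetilde{SL_2(\R)}$ remark pinpoints exactly where it enters.
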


\begin{proof}
Let $G$ be a connected Lie group. By a theorem of Malcev and Iwasawa, $G$ is homeomorphic
to $C\times \R^k$ for some $k$, where $C$ is the maximal compact subgroup of $G$. If we assume that
$G$ is homeomorphic to $\R^n$ then it follows that $G$ has no nontrivial compact subgroup. \\
Since $G$ has a faithful linear representation, it is the semidirect product $B\ltimes H$ with a reductive group $H$ 
and a simply connected solvable group $B$, which is normal in $G$. This reduces the proof to the case where $G$ is reductive.
We have to show that our group is trivial then. \\
A reductive group $G$ having a faithful linear representation has a compact center $Z$
with semisimple quotient $G/Z$. So we may assume that $G$ is semisimple and has trivial
center. \\
A semisimple group $G$ with trivial center is analytically isomorphic to its adjoint group
and hence has a non-trivial compact subgroup unless $G$ is trivial. But since our $G$ has no
nontrivial compact subgroup it is trivial.
\end{proof}

Let us explain the connection between crystallographic actions of a discrete group and simply transitive actions of 
a Lie group. If $G$ is a solvable Lie group admitting a simply transitively action by affine transformations on $\R^n$, 
then a {\em cocompact lattice $\Gamma$} in $G$ admits an affine crystallographic action. Conversely, if a torsionfree 
nilpotent group $\Gamma$ admits an affine crystallographic action via $\rho\colon \Gamma \ra A(n)$, then $\rho(\Gamma)$ 
is unipotent.
Hence its {\em Malcev completion $G_{\Gamma}$} is inside $A(n)$, and acts simply transitively by affine transformations.
We have the following result.

\begin{prop}\label{5.3}
There is a bijective correspondence between affine crystallographic actions of a finitely-generated
torsionfree nilpotent group $\Ga$ and simply transitive actions by affine transformations of
its Malcev completion $G_{\Gamma}$.
\end{prop}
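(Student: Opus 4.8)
The plan is to establish the two directions of the correspondence separately and then check that they are mutually inverse.

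\textbf{From a crystallographic action to a simply transitive action.} Suppose $\Gamma$ is a finitely-generated torsionfree nilpotent group with an affine crystallographic action $\rho\colon\Gamma\ra A(n)$. First I would argue that $\rho(\Gamma)$ consists of unipotent affine transformations: since $\Gamma$ is torsionfree nilpotent, its image is a torsionfree nilpotent subgroup of $A(n)$, and one shows (using that the Zariski closure of a torsionfree nilpotent subgroup has unipotent identity component, or by an argument on eigenvalues together with properness of the action) that the linear parts of $\rho(\Gamma)$ are unipotent. Then $\rho(\Gamma)$ lies in a maximal unipotent subgroup $U$ of $A(n)$, which is a connected simply connected nilpotent Lie group. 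The Malcev completion $G_\Gamma$ of $\Gamma$ is, by the universal property of Malcev completion applied to the embedding $\Gamma\hookrightarrow U$, realized as a closed connected subgroup $G_\Gamma\le U\le A(n)$ containing $\rho(\Gamma)$ as a cocompact lattice. Now $\dim G_\Gamma$ equals the Hirsch length $h(\Gamma)$; on the other hand, since $\Gamma$ acts properly discontinuously and cocompactly on $\R^n$, one gets $h(\Gamma)=n$ by a standard cohomological-dimension / covering-space argument ($\R^n/\rho(\Gamma)$ is a closed aspherical $n$-manifold with fundamental group $\Gamma$). Since $G_\Gamma$ is $n$-dimensional, acts by affine transformations, and contains a cocompact lattice acting properly discontinuously, the $G_\Gamma$-action on $\R^n$ is proper with cocompact (hence open and closed) orbit, so it is transitive; freeness of the action follows because an affine transformation fixing a point with unipotent linear part that lies in a one-parameter subgroup must be trivial. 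Thus $G_\Gamma$ acts simply transitively by affine transformations.

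\textbf{From a simply transitive action to a crystallographic action.} Conversely, if $G_\Gamma$ acts simply transitively by affine transformations on $\R^n$, then $\Gamma$, sitting inside $G_\Gamma$ as a cocompact lattice, acts on $\R^n$ by restriction. A simply transitive action is in particular proper and free, and restricting a proper free action to a discrete subgroup keeps it proper; properness of a free action of a discrete group is exactly proper discontinuity. Cocompactness of $\R^n/\Gamma$ follows because $\R^n/\Gamma$ fibers over $\R^n/G_\Gamma=\{\mathrm{pt}\}$ with compact fiber $G_\Gamma/\Gamma$ — concretely, the orbit map $G_\Gamma\to\R^n$ is a homeomorphism, under which the $\Gamma$-action corresponds to left translation, so $\R^n/\Gamma\cong G_\Gamma/\Gamma$ is compact. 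Hence $\Gamma$ acts affinely crystallographically.

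\textbf{The two constructions are mutually inverse.} In one composite, starting from $\Gamma\curvearrowright\R^n$, passing to $G_\Gamma\curvearrowright\R^n$ and restricting back to $\Gamma$ recovers the original action on the nose, since $G_\Gamma$ was built inside $A(n)$ so as to contain $\rho(\Gamma)$ with its given embedding. In the other composite, starting from $G_\Gamma\curvearrowright\R^n$, restricting to the lattice $\Gamma$ and then taking the Malcev completion of that restricted copy of $\Gamma$ inside $A(n)$ returns $G_\Gamma$, because the Malcev completion of a lattice in a connected simply connected nilpotent Lie group is that Lie group itself, and the realization inside $A(n)$ is unique by the universal property. The main obstacle, and the point that deserves the most care, is the first direction: proving unipotence of $\rho(\Gamma)$ and, above all, pinning down that $\dim G_\Gamma=n$ so that the simply transitive orbit fills all of $\R^n$ — this is where properness and cocompactness of the discrete action are genuinely used, via the fact that a torsionfree group acting properly discontinuously and cocompactly on $\R^n$ has Hirsch length (equivalently, cohomological dimension) exactly $n$.
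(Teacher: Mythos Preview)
Your argument is correct and follows the same line the paper sketches in the paragraph immediately preceding the proposition: unipotence of $\rho(\Gamma)$, passage to the Malcev completion inside $A(n)$, and the converse via restriction to a cocompact lattice. The paper does not give a detailed proof beyond that sketch, so you have in fact supplied more detail (the Hirsch-length/cohomological-dimension count and the mutual-inverse check) than the paper itself.
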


This generalizes to nil-affine crystallographic actions. We say that $G$ admits a simply transitively action by
{\em nil-affine transformations} on $N$, if there is a homomorphism $\rho\colon G \ra \Aff(N)$ letting $G$ act simply 
transitively on $N$.

\section{Left-invariant affine structures on Lie groups}

Milnor formulated his question $\ref{4.5}$ in terms of left-invariant affine structures on Lie groups.
We follow here his article \cite{MIL}.

\begin{defi}
An {\em affine} structure (or affinely flat structure) on an $n$-dimensional manifold $M$
is a collection of coordinate homeomorphisms
\[
f_{\al}\colon U_{\al}\ra V_{\al}\subseteq \R^n,
\]
where the $U_{\al}$ are open sets covering $M$, and the $V_{\al}$ are open
subsets of $\R^n$; whenever $U_{\al}\cap U_{\be}\neq \emptyset$, it is required that
the change of coordinate homeomorphism
\[
f_{\be}f_{\al}^{-1} \colon f_{\al}(U_{\al}\cap U_{\be})\ra f_{\be}(U_{\al}\cap U_{\be})
\]
extends to an affine transformation in $A(n)=\Aff(\R^n)$. We call $M$ together with this structure
an {\em affine} manifold, or an {\em affinely flat manifold}.
\end{defi}

A special case of affinely flat manifolds are {\em Riemannian-flat manifolds}, where
the coordinate changes extend to isometries in $E(n)$, i.e., to affine transformations
$x\mapsto Ax+b$ with $A\in O_n(\R)$. \\
For surfaces we have the following result by Benzecri \cite{BEZ}.

\begin{prop}
A closed surface admits an affine structure if and only if its Euler characteristic vanishes. 
\end{prop}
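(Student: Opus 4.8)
The plan is to prove both directions separately, since the statement is an "if and only if" about closed surfaces $\Sigma$ admitting an affine structure versus $\chi(\Sigma)=0$. For the easy direction ($\chi=0$ implies an affine structure exists) I would simply exhibit the structures: by the classification of closed surfaces, $\chi(\Sigma)=0$ forces $\Sigma$ to be the torus $T^2$ or the Klein bottle $K$. The torus $\R^2/\Z^2$ carries the obvious affine structure coming from the standard coordinates on $\R^2$, since the deck transformations are translations, hence affine. For the Klein bottle, one realizes $\pi_1(K)$ as a Bieberbach subgroup of $E(2)\subseteq A(2)$ acting freely and properly discontinuously on $\R^2$ (as already noted in the discussion of the two two-dimensional Bieberbach groups), and the quotient inherits an affine — indeed Riemannian-flat — structure. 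So this direction is a matter of pointing at explicit models.

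The substantive direction is: a closed surface with an affine structure has $\chi(\Sigma)=0$. The natural approach is the Chern–Gauss–Bonnet / flat-connection argument. An affine structure on $\Sigma$ determines a flat torsion-free affine connection $\nabla$ on the tangent bundle $T\Sigma$: in each chart $f_\alpha$, pull back the standard flat connection on $\R^n$, and the transition maps being affine guarantees these agree on overlaps. Thus $T\Sigma$ admits a flat connection. Now I would invoke the fact that a real vector bundle of rank $2$ over a closed surface admitting a flat connection has vanishing Euler class: the Euler number equals the integral of the curvature $2$-form (Chern–Weil / Gauss–Bonnet), which is identically zero for a flat connection, so $e(T\Sigma)=0$; but $e(T\Sigma)=\chi(\Sigma)$. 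Hence $\chi(\Sigma)=0$.

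There is one genuine subtlety to address, which I expect to be the main obstacle: the flat connection coming from an affine structure need not be compatible with any metric — it is a flat \emph{affine} (linear) connection, not a flat orthogonal one — so the naive "Chern–Weil for the Euler class" statement, which is usually phrased for metric connections, must be handled carefully. The clean way around this is to argue at the level of the structure group: a flat connection reduces the structure group of $T\Sigma$ to the discrete group $GL_2(\R)^\delta$ (equivalently, $T\Sigma$ is associated to a representation of $\pi_1(\Sigma)$), and the Euler class, being a characteristic class pulled back from $BGL_2(\R)$, factors through the classifying map; Milnor's inequality (or simply the observation that a rank-$2$ flat bundle over a closed surface of genus $g$ has Euler number bounded by $|e|\le |\chi(\Sigma)|$ together with a parity/vanishing refinement when a full flat affine structure is present) pins it down. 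Alternatively, and more elementarily, one can use that the developing map and holonomy of the affine structure express $\chi(\Sigma)$ as a sum of indices of a vector field constructed from the radiant/parallel data and show this sum must vanish. I would present the Chern–Weil version as the main line and remark that the surface case is low enough dimensional that the metric-incompatibility issue can be sidestepped by passing to the associated $GL_2^+(\R)$-bundle and noting its Euler class is computed by any connection, in particular a flat one.
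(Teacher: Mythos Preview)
The paper does not supply a proof of this proposition; it is quoted as a result of Benz\'ecri with a reference to his thesis, and then used only to remark that closed surfaces other than the torus and Klein bottle carry no affine structure. So there is no argument in the text to compare yours against.

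Your proposal, however, has a genuine gap in the nontrivial direction. The main line --- that the Euler number equals the integral of the curvature $2$-form, hence vanishes for a flat connection --- is false for $GL_2(\R)$-connections. Chern--Weil for the full linear group only produces $GL_n$-invariant polynomials in the curvature, and the Pfaffian is not among them; it requires an $O(n)$-reduction. Concretely, flat oriented plane bundles over a genus-$g$ surface with nonzero Euler number \emph{do} exist: Milnor's 1958 paper shows that every integer $e$ with $|e|\le g-1$ is realized by a flat $GL_2^+(\R)$-bundle. Your closing parenthetical, that the Euler class ``is computed by any connection, in particular a flat one'', repeats the same mistake.

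Your fallback to Milnor's inequality is the right instinct but is misquoted in a way that breaks the argument. The bound you state, $|e|\le|\chi(\Sigma)|$, is Wood's inequality for flat $\mathrm{Homeo}^+(S^1)$-bundles, and it yields no contradiction since $|e(T\Sigma)|=|\chi(\Sigma)|$ on the nose. What you actually need is Milnor's sharper bound for flat \emph{linear} bundles, $|e|\le g-1$: then for $g\ge 2$ one has $|e(T\Sigma_g)|=2g-2>g-1$, which is the contradiction. The sphere is handled by simple connectivity (every flat bundle is trivial, yet $e(TS^2)=2$), and non-orientable surfaces by passing to the orientation double cover. The ``parity/vanishing refinement when a full flat affine structure is present'' that you invoke is not a standard statement and should be replaced by this explicit inequality. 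Benz\'ecri's own argument, for the record, is more direct and geometric, working with the developing map rather than characteristic-class bounds.
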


In particular, a closed surface different from the $2$-torus or the Klein bottle does not admit any affine 
structure.

\begin{defi}
An affine structure on a Lie group $G$ is called {\em left-invariant} if each left-multiplication map 
$L(g)\colon G\ra G$ is an affine diffeomorphism.
\end{defi}

\begin{defi}
An affine structure on $G$ is called {\em complete}, if the universal covering $\widetilde{G}$ is affinely 
diffeomorphic to $\R^n$.
\end{defi}

\begin{prop}
There is a canonical bijection between left-invariant complete affine
structures on $G$ and simply transitive actions of $G$ on $\R^n$ by affine motions.
\end{prop}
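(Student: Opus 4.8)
The plan is to establish the bijection by passing between the geometric datum (a left-invariant complete affine structure on $G$) and the algebraic datum (a homomorphism $\rho\colon G\to A(n)$ giving a simply transitive action), and to check that the two constructions are mutually inverse. First I would note that, since the affine structure is complete, the universal covering $\widetilde G$ is affinely diffeomorphic to $\R^n$; fix such a diffeomorphism $\phi\colon \widetilde G\to\R^n$. Left translations $L(g)$ on $G$ lift to affine diffeomorphisms of $\widetilde G$, and conjugating by $\phi$ turns each lift into an element of $A(n)$. The lifts of left translations are only well-defined up to the deck group, so to get an honest homomorphism $G\to A(n)$ I would work with the identity component of the lift: because $G$ is connected there is a unique lift $\widetilde{L(g)}$ depending continuously on $g$ with $\widetilde{L(e)}=\id$, and $g\mapsto \phi\,\widetilde{L(g)}\,\phi^{-1}$ is then a continuous — hence smooth — homomorphism $\rho\colon G\to A(n)$. (If $G$ itself is simply connected this step is immediate; in general one may as well replace $G$ by $\widetilde G$ throughout, since a left-invariant affine structure on $G$ pulls back to one on $\widetilde G$ and the statement is really about $\widetilde G$.)

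Next I would check that $\rho$ is a simply transitive action. Transitivity: the orbit map $g\mapsto \rho(g)\cdot\phi(\widetilde e)$ is, by construction of the lifts, just $g\mapsto \phi(\widetilde L(g)(\widetilde e))=\phi(\widetilde g)$, which is the diffeomorphism $\phi$ composed with the covering $\widetilde G\to G$ trivialised at the basepoint — so it is a bijection onto $\R^n$. Simplicity of the stabiliser then follows because an affine transformation fixing a point and agreeing with $\rho(g)$ — which is a left translation in disguise — is determined by its derivative, and left translations act freely. Equivalently, $\rho(g)$ fixing the basepoint forces $\widetilde L(g)$ to fix $\widetilde e$, hence $g=e$ since left multiplication by $g$ has a fixed point only when $g=e$. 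This shows the map from affine structures to simply transitive actions is well-defined.

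For the reverse direction, given a homomorphism $\rho\colon G\to A(n)$ acting simply transitively, the orbit map $\theta\colon G\to\R^n$, $\theta(g)=\rho(g)\cdot 0$, is a diffeomorphism (this is exactly what simple transitivity plus the implicit-function/submersion argument for smooth transitive actions gives). Transport the standard affine structure of $\R^n$ back along $\theta$ to get an affine structure on $G$. It is left-invariant because $\theta(hg)=\rho(h)\theta(g)$, so $L(h)=\theta^{-1}\circ\rho(h)\circ\theta$ is a composite of affine maps; it is complete because $G$ is then affinely diffeomorphic (via $\theta$) to $\R^n$ and $G$, being a connected Lie group diffeomorphic to $\R^n$, is its own universal cover. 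Finally I would verify that the two constructions invert each other: starting from an affine structure, forming $\rho$, and transporting back recovers the original structure because $\theta=\phi$ up to the identification, and $\phi$ was an affine diffeomorphism; starting from $\rho$, the lifts of the left translations of the induced structure are exactly the $\rho(g)$, so we get $\rho$ back.

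The main obstacle I anticipate is purely bookkeeping rather than conceptual: making the passage $L(g)\rightsquigarrow$ affine transformation canonical, i.e. choosing lifts coherently so that $\rho$ is a genuine homomorphism and not merely defined up to the deck group, and handling the (harmless) discrepancy between $G$ and $\widetilde G$. Once one agrees — as Milnor does — to phrase everything on the simply connected group, or to use the continuous-lift normalisation above, the rest is a routine check that smooth transitive actions have diffeomorphic orbit maps and that "extends to an affine transformation" is preserved under the back-and-forth. I would also remark that completeness is precisely what is needed to identify $\widetilde G$ with $\R^n$ globally; without it one only gets a developing map into $\R^n$ that need not be a diffeomorphism, and the correspondence fails.
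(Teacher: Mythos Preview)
The paper does not supply a proof of this proposition; it is stated without argument, following Milnor \cite{MIL}, as a standard fact used to translate between the geometric and group-theoretic formulations. Your proposal is the standard proof via the developing map in one direction and the orbit map in the other, and it is correct. Your instinct about the simple-connectedness issue is also right: as stated, the bijection is cleanest for simply connected $G$, since a group acting simply transitively on $\R^n$ is diffeomorphic to $\R^n$ and hence already simply connected, while in the other direction completeness identifies $\widetilde G$ (not $G$) with $\R^n$; replacing $G$ by $\widetilde G$ throughout, as you suggest, removes the need for the lift bookkeeping entirely.
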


If $G$ admits a left-invariant complete affine structure, then for any discrete group
$\Ga$ the coset space $G/\Ga$ is a complete affinely flat manifold with fundamental group isomorphic to $\Ga$. \\[0.2cm]
Here is Milnor's question in the original context.

\begin{qu}[Milnor 1977]\label{6.6}
Does every solvable $n$-dimensional Lie group $G$ admit a complete left-invariant affine 
structure, or equivalently, does the universal covering group $\widetilde{G}$ act simply transitively
by affine transformations on $\R^n$ ?
\end{qu}

Milnor remarked that the answer is positive for $2$-step nilpotent and $3$-step nilpotent Lie groups and
for Lie groups whose Lie algebra admits a non-singular derivation. Such Lie algebras are necessarily nilpotent.
Furthermore the answer is positive for all connected and simply connected complex nilpotent Lie groups of
dimension $n\le 7$. However, as we have mentioned above, Benoist gave a counterexample in \cite{BEN} and we gave 
families of counterexamples in \cite{BU3,BU5}. The Lie algebras of all such counterexamples here are {\em filiform
nilpotent} Lie algebras. One can verify that all connected and simply-connected filiform nilpotent Lie groups
of dimension $n\le 9$ admit a complete left-invariant affine structure. Hence the minimal dimension for this kind of 
counterexamples is $10$. The result in \cite{BU5} is the following.

\begin{prop}
There exist families of nilpotent Lie groups of dimension $10$ and
nilpotency class $9$ not admitting any left-invariant affine structure.
\end{prop}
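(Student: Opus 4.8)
The plan is to pass from the Lie group to its Lie algebra, exhibit the relevant $10$-dimensional filiform Lie algebras explicitly, and then rule out every compatible product by a filtration-and-elimination argument.

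\emph{Reduction to Lie algebras.} By the infinitesimal version of the dictionary recalled in Section~6, going back to \cite{MIL}, a left-invariant affine structure on a connected, simply connected Lie group $G$ is the same as a \emph{left-symmetric} (pre-Lie) product $x\cdot y$ on $\Lg=\operatorname{Lie}(G)$, i.e.\ one with $x\cdot y-y\cdot x=[x,y]$ and associator symmetric in the first two slots. Writing $L_x(y)=x\cdot y$, this is a linear map $L\colon\Lg\to\mathfrak{gl}(\Lg)$ which is a Lie algebra homomorphism, $[L_x,L_y]=L_{[x,y]}$, and for which $\id\colon\Lg\to\Lg$ is a bijective $1$-cocycle for the module $(\Lg,L)$. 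Thus it suffices to produce infinitely many pairwise non-isomorphic nilpotent Lie algebras $\Lg$ with $\dim\Lg=10$ and nilpotency class $9$ admitting no such $L$.

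\emph{The family.} I would take the $\Lg$ from the classification of filiform nilpotent Lie algebras: in a basis $e_1,\dots,e_{10}$ with $[e_1,e_i]=e_{i+1}$ for $2\le i\le 9$ and further brackets $[e_i,e_j]=\sum_k c_{ij}^{k}e_k$ supported deep in the lower central series, chosen so that the algebras stay filiform (hence nilpotent of class $9=\dim\Lg-1$) as the parameters vary, and are pairwise non-isomorphic for distinct parameters. That the family is genuinely infinite is checked with the standard isomorphism invariants (dimensions of the terms of the lower and upper central series, and a continuous modulus of the adjoint action of $e_1$, of cross-ratio type).

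\emph{The obstruction.} Suppose some $\Lg$ in the family carries a left-symmetric product with associated $L$. The lower central series $\Lg=\Lg^1\supsetneq\Lg^2\supsetneq\dots\supsetneq\Lg^9\supsetneq 0$ satisfies $\dim\Lg^i=10-i$ for $i\ge1$. The first task is a structural lemma valid for filiform $\Lg$: any left-symmetric $L$ respects this flag up to a bounded shift of degree, and, moreover, $\tr L_x=0$ for all $x$ (so that only complete structures can occur here); this forces $L$ to be conjugate, by an automorphism of $\Lg$, to one in near-triangular form with only finitely many undetermined structure constants. Feeding these into the homomorphism identity $[L_x,L_y]=L_{[x,y]}$ together with the condition that $\id$ be a bijective $1$-cocycle yields a finite system of polynomial equations in the remaining pre-Lie constants and the parameters of the family. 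I would then show that this system has no solution over $\R$ — indeed over $\C$ — thereby contradicting the existence of $L$. By the reduction of the first paragraph this shows the Lie groups integrating these $\Lg$ admit no left-invariant affine structure.

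The main obstacle is the last paragraph: pinning down the exact filtration constraints on $L$ for filiform $\Lg$ so that only finitely many parameters survive, and then verifying that the resulting polynomial system is inconsistent for this particular $10$-dimensional family — which is precisely where one uses that the situation differs from dimension $\le 9$ (and from $2$- and $3$-step nilpotent groups), where complete left-invariant affine structures always exist. In practice the inconsistency is extracted from a small number of decisive equations, either by hand or by a Gröbner basis computation.
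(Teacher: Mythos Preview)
The survey does not prove this proposition; it quotes it from \cite{BU5}. What the paper does indicate, in Section~7, is how the argument is organised: one shows that the $10$-dimensional filiform Lie algebras in question satisfy $\mu(\Lg)\ge 12$, i.e.\ admit no faithful linear representation of degree $11$, and then invokes the observation (Proposition after Question~\ref{7.11}) that a pre-Lie structure on an $n$-dimensional $\Lg$ produces a faithful representation of degree $n+1$. Your outline attacks the pre-Lie structure directly rather than going through $\mu(\Lg)$; the two are of course closely related, since the affine representation $x\mapsto\begin{pmatrix}L(x)&x\\0&0\end{pmatrix}$ is exactly the faithful $(n{+}1)$-dimensional module one has to exclude.

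Your sketch is the right shape, but two of the structural claims you lean on are exactly where the difficulty sits, and as stated they are not justified. First, ``$\tr L_x=0$ for all $x$'' does not follow from $\Lg$ being filiform: one only gets $\tr L_x=0$ for $x\in[\Lg,\Lg]$ (since $L_{[a,b]}=[L_a,L_b]$), and $[\Lg,\Lg]$ has codimension $2$, so the trace could be a nonzero functional on $\Lg/[\Lg,\Lg]$. In particular this does not by itself force completeness. Second, the assertion that $L$ ``respects the lower central flag up to a bounded shift'' is a genuine theorem about modules for filiform algebras, not a formality; in \cite{BU3,BU5,BEN} the corresponding statement is that any $n$-dimensional $\Lg$-module with a bijective $1$-cocycle (equivalently any faithful $(n{+}1)$-dimensional module) has a very rigid shape governed by the action of the filiform generator $e_1$, and establishing this is the heart of the matter. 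Once that normal form is in hand, the inconsistency of the resulting polynomial system for the specific $10$-dimensional families is indeed what is checked---but you should be aware that this step is substantial and is what distinguishes dimension $10$ from $\le 9$. So your plan is sound, but the two ``structural lemmas'' you pass over are the actual content of \cite{BU5}; the paper's $\mu(\Lg)$ formulation packages this module analysis more cleanly.
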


There are also families of such counterexamples in dimension $11,12$ and $13$, but a general result for
all dimensions $n\ge 10$ is only conjectured, see \cite{BU35}, but not known.

\section{Pre-Lie algebra and post-Lie algebra structures}

Several statements from the previous sections can be formulated on the level of Lie algebras in terms of
certain compatible algebraic structures on the Lie algebra of the corresponding Lie group. In particular, Milnor's 
question can be reduced to the level of Lie algebras, namely to pre-Lie algebra structures on Lie algebras and 
to faithful finite-dimensional representations of Lie algebras. All bijective correspondence mentioned are understood up to 
suitable equivalence of the structures involved.

\begin{defi}
A {\em pre-Lie algebra $(V,\cdot)$} is a vector space $V$ equipped with a binary operation $(x,y)\mapsto x\cdot y$ such 
that for all $x,y,z \in V$
\begin{align*}
(x\cdot y)\cdot z -x\cdot (y\cdot z) & = (y\cdot x)\cdot z - y\cdot (x\cdot z).
\end{align*}
\end{defi}

If $(V,\cdot)$ is a pre-Lie algebra, then for $x,y\in V$ the binary operation
\[
[x,y]:=x\cdot y-y\cdot x
\]
defines a Lie algebra.

\begin{defi}
A bilinear product $x\cdot y$ on $\Lg\times \Lg$ is called a {\em pre-Lie algebra structure
on $\Lg$}, if it satisfies
\begin{align*}
[x,y] & = x\cdot y-y\cdot x, \\
[x,y]\cdot z & = x\cdot (y\cdot z)-y\cdot (x\cdot z),
\end{align*}
for all $x,y,z\in \Lg$.
A Lie algebra $\Lg$ over a field $K$ is said to {\em admit a pre-Lie algebra structure}, if there
exists a pre-Lie algebra structure on $\Lg$.
\end{defi}

\begin{ex}
The Heisenberg Lie algebra $\Ln_3(K)$ of dimension $3$ with basis $\{e_1,e_2,e_3\}$ and
Lie brackets $[e_1,e_2]=e_3$ admits a pre-Lie algebra structure, given by
\[
e_1\cdot e_2  = \frac{1}{2}e_3,\quad e_2\cdot e_1  = -\frac{1}{2}e_3.
\]
\end{ex}

Denote by $L(x)$ the left multiplication operator given by
$L(x)(y)=x\cdot y$.  Then the second identity becomes
\[
L([x,y])= [L(x),L(y)]. 
\]
for all $x,y\in \Lg$. Hence the left multiplication operators define a $\Lg$-module $\Lg_L$ by
\[
L\colon \Lg \ra \Lg\Ll (\Lg),  x \mapsto L(x).
\]
Denote by $I\colon \Lg\ra \Lg_L$ the identity map. Then the first identity becomes
\[
I([x,y]) = I(x)\cdot y-I(y)\cdot x. 
\]
Hence the identity map is a $1$-cocycle, i.e., $I\in Z^1(\Lg,\Lg_L)$. We have the following result \cite{BU2}.

\begin{prop}
Let $\Lg$ be a $n$-dimensional Lie algebra. Then $\Lg$ admits a pre-Lie algebra structure if and only if
there is a $n$-dimensional $\Lg$-module $M$ with nonsingular $1$-cocycle in $Z^1(\Lg,M)$.
\end{prop}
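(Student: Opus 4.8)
The plan is to prove both directions by translating the defining identities of a pre-Lie structure into statements about a module and a cocycle, as already set up in the paragraph preceding the proposition. For the forward direction, suppose $\Lg$ admits a pre-Lie algebra structure $x\cdot y$. Take $M=\Lg_L$, the vector space $\Lg$ regarded as a $\Lg$-module via the left-multiplication representation $L\colon\Lg\ra\Lg\Ll(\Lg)$; the second pre-Lie identity $L([x,y])=[L(x),L(y)]$ is exactly the statement that $L$ is a Lie algebra homomorphism, so $M$ is a well-defined $\Lg$-module of dimension $n$. Then the first pre-Lie identity $[x,y]=x\cdot y-y\cdot x$ rewrites, with $I\colon\Lg\ra\Lg_L$ the identity linear map, as $I([x,y])=L(x)I(y)-L(y)I(x)$, which is precisely the $1$-cocycle condition $I\in Z^1(\Lg,M)$. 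Since $I$ is the identity map it is a vector space isomorphism, hence nonsingular, giving an $n$-dimensional $\Lg$-module with a nonsingular $1$-cocycle.

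For the converse, suppose $M$ is an $n$-dimensional $\Lg$-module with action written $x\cdot m$ for $x\in\Lg$, $m\in M$, and suppose $\phi\in Z^1(\Lg,M)$ is a nonsingular $1$-cocycle, i.e.\ a linear isomorphism $\phi\colon\Lg\ra M$ satisfying $\phi([x,y])=x\cdot\phi(y)-y\cdot\phi(x)$. Define a bilinear product on $\Lg$ by
\[
x\ast y:=\phi^{-1}(x\cdot\phi(y)).
\]
I must check the two pre-Lie axioms. The first, $[x,y]=x\ast y-y\ast x$, follows immediately by applying $\phi^{-1}$ to the cocycle identity. For the second, $[x,y]\ast z=x\ast(y\ast z)-y\ast(x\ast z)$, compute $\phi$ of both sides: the left side gives $[x,y]\cdot\phi(z)$, while the right side gives $x\cdot\phi(y\ast z)-y\cdot\phi(x\ast z)=x\cdot(y\cdot\phi(z))-y\cdot(x\cdot\phi(z))$; these agree precisely because $M$ is a $\Lg$-module, i.e.\ $[x,y]$ acts as the commutator $x\cdot(y\cdot-)-y\cdot(x\cdot-)$. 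Since $\phi$ is injective this yields the identity on the nose, so $\ast$ is a pre-Lie algebra structure on $\Lg$.

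There is essentially no hard obstacle here: the content is the dictionary ``nonsingular cocycle $\leftrightarrow$ pre-Lie product,'' and the only thing to be careful about is bookkeeping — keeping the module action, the Lie bracket, and the transported product $\ast$ notationally distinct, and using nonsingularity of $\phi$ in exactly one place (to know $\phi^{-1}$ exists, so that $\ast$ is defined on all of $\Lg$ and the injectivity argument goes through). One should also remark that the construction is inverse to the forward one up to the obvious equivalence of structures, which justifies the phrase ``bijective correspondence'' in the surrounding discussion, though the stated proposition only asks for the logical equivalence. I would present the two directions as above, each in a couple of lines of displayed computation.
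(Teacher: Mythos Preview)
Your proof is correct and follows exactly the approach indicated in the paper: the paragraph preceding the proposition already establishes the forward direction (left multiplications give the module, the identity map is the nonsingular cocycle), and the paper then cites \cite{BU2} rather than writing out the converse. Your converse via $x\ast y:=\phi^{-1}(x\cdot\phi(y))$ is the standard construction and matches what one finds in that reference.
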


\begin{ex}
Let $\Lg$ be a Lie algebra admitting a nonsingular derivation $D$. Then $\Lg$ admits a pre-Lie algebra
structure, given by
\[
x\cdot y=D^{-1}([x,D(y)])
\]
for all $x,y\in \Lg$.
\end{ex}

Jacobson \cite{JAC} proved the following result in $1955$.

\begin{prop}
Let $\Lg$ be a Lie algebra over a field of characteristic zero admitting a nonsingular derivation.
Then $\Lg$ is nilpotent.
\end{prop}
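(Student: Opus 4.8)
The plan is to use the nonsingular derivation $D$ as an invertible linear operator and analyze how it interacts with the lower central series of $\Lg$. First I would observe that since $D$ is a derivation, $D$ maps $[\Lg,\Lg]$ into itself: indeed $D([x,y]) = [D(x),y] + [x,D(y)] \in [\Lg,\Lg]$. More generally, writing $\Lg^1 = \Lg$ and $\Lg^{k+1} = [\Lg,\Lg^k]$ for the lower central series, a straightforward induction shows each $\Lg^k$ is $D$-invariant. Since $D$ is nonsingular and each $\Lg^k$ is finite-dimensional, $D$ restricts to an \emph{automorphism} of each $\Lg^k$; in particular $D(\Lg^k) = \Lg^k$ for all $k$.

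The key step is to bring in a generalized eigenspace (or weight-space) decomposition of $D$. Over an algebraically closed extension of the base field — or after passing to the algebraic closure, which does not affect nilpotency — decompose $\Lg = \bigoplus_{\la} \Lg_\la$ into generalized eigenspaces of $D$, where $\la$ ranges over the eigenvalues of $D$; nonsingularity means $0$ is not among them. The derivation property yields the grading-type relation $[\Lg_\la, \Lg_\mu] \subseteq \Lg_{\la+\mu}$ (with the convention $\Lg_{\la+\mu} = 0$ if $\la+\mu$ is not an eigenvalue). Now consider an iterated bracket $[x_1,[x_2,[\dots,x_m]\dots]]$ of elements lying in eigenspaces $\Lg_{\la_1},\dots,\Lg_{\la_m}$: it lands in $\Lg_{\la_1+\dots+\la_m}$. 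The point is that one can choose, among the finitely many eigenvalues, one whose real part (or, more carefully, a suitable $\Q$-linear functional separating the eigenvalues from $0$) is extremal, so that long enough sums $\la_1+\dots+\la_m$ are forced out of the spectrum.

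To make the extremality argument clean I would argue as follows. The eigenvalues $\la_1,\dots,\la_r$ lie in a finite-dimensional $\Q$-vector space $W \subseteq \overline{K}$ spanned by them; pick a $\Q$-linear functional $\phi\colon W \to \Q$ with $\phi(\la_i) \neq 0$ for all $i$ (possible since finitely many nonzero hyperplane conditions cannot exhaust $W$). Set $c = \min_i |\phi(\la_i)| > 0$ and $C = \max_i |\phi(\la_i)|$. Consider the subalgebra-type filtration by $\phi$-value: for any iterated bracket of $m$ eigenvector arguments that is nonzero, its $\phi$-value has absolute value at most $C$ but is a sum of $m$ terms... this needs a sign argument, so instead restrict attention to eigenvectors whose eigenvalues have $\phi$ of a \emph{fixed sign}, say positive, spanning a subspace $\Lg_+$; then iterated brackets of elements of $\Lg_+$ of length $m$ have $\phi$-value $\ge mc$, hence vanish once $mc > C$, i.e.\ $\Lg_+$ is nilpotent as a Lie algebra with bounded nilpotency index. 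Doing the same for the negative part $\Lg_-$, and noting $\Lg = \Lg_+ \oplus \Lg_-$ (no zero eigenvalue), one gets that $\Lg$ is built from two nilpotent pieces; a short additional argument with the full weight decomposition — in fact the cleanest route is to directly show $\Lg^k = 0$ for $k$ large by observing any length-$k$ iterated bracket decomposes into weight components indexed by $\la_{i_1}+\dots+\la_{i_k}$, and for $k$ exceeding $2C/c$ every such sum has absolute $\phi$-value exceeding... again the sign issue.

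I expect the main obstacle to be precisely this sign/cancellation issue: the naive ``sum of $k$ eigenvalues leaves the spectrum'' fails because eigenvalues of opposite sign can cancel. The honest fix, which I would adopt, is Jacobson's original device: consider the linear operator $\mathrm{ad}$ together with $D$, or more simply use that a Lie algebra is nilpotent iff \emph{every} element acts nilpotently (Engel's theorem), and show each $\mathrm{ad}(x)$ is nilpotent. For $x$ an eigenvector with eigenvalue $\la \neq 0$, the operator $\mathrm{ad}(x)$ shifts weights by $\la$; since there are only finitely many weights, $\mathrm{ad}(x)^N$ shifts by $N\la$ and must be zero for $N$ large, so $\mathrm{ad}(x)$ is nilpotent. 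A general element is a sum of such eigenvector-components, and one reduces the nilpotency of $\mathrm{ad}$ of a sum to the eigenvector case by a standard argument (e.g.\ the set of $\mathrm{ad}$-nilpotent elements, while not obviously a subspace, can be handled since the eigenvectors span and one can invoke Engel's theorem on the Lie algebra $\mathrm{ad}(\Lg)$ using that it is spanned by nilpotent elements $\mathrm{ad}(x_\la)$ that moreover satisfy the weight relations). This last reduction is the delicate point and is where I would spend the most care; everything else is bookkeeping with the weight decomposition.
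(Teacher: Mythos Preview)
The paper does not actually prove this proposition; it only states the result and attributes it to Jacobson \cite{JAC}. So there is no ``paper's proof'' to compare against, and I will assess your proposal on its own merits.

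Your setup is correct: passing to the algebraic closure, decomposing into generalized eigenspaces $\Lg_\la$ of $D$, noting $[\Lg_\la,\Lg_\mu]\subseteq\Lg_{\la+\mu}$, and observing that $0$ is not an eigenvalue. You also correctly prove that every (generalized) eigenvector $x\in\Lg_\la$ has $\ad(x)$ nilpotent, since $\ad(x)$ shifts the finite weight set by the nonzero quantity $\la$.

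The genuine gap is exactly where you yourself flag it: going from ``a spanning set of $\ad$-nilpotent elements'' to ``$\Lg$ is nilpotent''. This implication is \emph{false} in general, and invoking Engel's theorem does not rescue it, because Engel requires \emph{every} element to be $\ad$-nilpotent, not just a spanning set. The Lie algebra $\Ls\Ll_2(K)$ is spanned by the $\ad$-nilpotent elements $e$ and $f$, yet $h=[e,f]$ is not $\ad$-nilpotent and $\Ls\Ll_2(K)$ is simple. Your parenthetical remark that the weight relations ``can handle'' this is too vague to constitute an argument; the weight grading alone does not prevent exactly this kind of phenomenon a priori.

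The standard repair (which is essentially Jacobson's) is a two-step reduction that you are missing. First, reduce to the solvable case: the radical $\rad(\Lg)$ is a characteristic ideal, hence $D$-stable, and $D$ induces a nonsingular derivation on the semisimple quotient $\Lg/\rad(\Lg)$; but every derivation of a semisimple Lie algebra in characteristic zero is inner, hence has nontrivial kernel, forcing $\Lg/\rad(\Lg)=0$. So $\Lg$ is solvable. Second, in a \emph{solvable} Lie algebra over a field of characteristic zero, the set of $\ad$-nilpotent elements is not merely a spanning set but is an \emph{ideal} (indeed the nilradical): by Lie's theorem $\ad(\Lg)$ is upper-triangularisable, and the $\ad$-nilpotent elements are precisely those mapping to strictly upper-triangular matrices. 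Since your eigenvectors span $\Lg$ and are all $\ad$-nilpotent, $\Lg$ equals its nilradical and is nilpotent. Without this solvability step your argument does not close.
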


This result does not hold for fields of prime characteristic $p>0$. There are even
simple modular Lie algebras of nonclassical type admitting nonsingular derivations, see \cite{BKK}.
This is of interest in the theory of pro-$p$ groups of finite coclass. \\[0.2cm]
In general a given Lie algebra need not admit a pre-Lie algebra structure.

\begin{ex}
The Lie algebra $\Ls\Ll_2(K)$ over a field $K$ of characteristic zero does not
admit a pre-Lie algebra structure.
\end{ex}

More generally, we have the following result. 

\begin{prop}\label{7.8}
Let $\Lg$ be a finite-dimensional semisimple Lie algebra over a field of characteristic zero.
Then $\Lg$ does not admit a pre-Lie algebra structure.
\end{prop}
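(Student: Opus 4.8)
The plan is to argue by contradiction, combining the cocycle reformulation (Proposition stating that a pre-Lie structure on an $n$-dimensional $\Lg$ is the same as an $n$-dimensional $\Lg$-module $M$ carrying a nonsingular $1$-cocycle $I\in Z^1(\Lg,M)$) with Whitehead's lemma. Recall Whitehead's first lemma: if $\Lg$ is a finite-dimensional semisimple Lie algebra over a field of characteristic zero and $M$ is any finite-dimensional $\Lg$-module, then $H^1(\Lg,M)=0$. Hence every $1$-cocycle $I\colon\Lg\to M$ is a coboundary: there is $m\in M$ with $I(x)=x\cdot m$ for all $x\in\Lg$, i.e.\ $I=\partial m$ where $\partial$ is the Chevalley--Eilenberg differential.

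First I would suppose, for contradiction, that $\Lg$ admits a pre-Lie structure. By the quoted Proposition we obtain an $n$-dimensional module $M$ (with $n=\dim\Lg$) and a bijective cocycle $I\in Z^1(\Lg,M)$. Next I would invoke Whitehead to write $I(x)=x\cdot m$ for a fixed $m\in M$. Now the key point is that the map $x\mapsto x\cdot m$ cannot be injective on a semisimple $\Lg$: since $\Lg$ is semisimple it has nonzero center-free structure, but more to the point $\Lg=[\Lg,\Lg]$, so for any $h$ in a Cartan subalgebra and any root vector situation one produces elements acting trivially on $m$. Concretely, consider the annihilator $\Ann(m)=\{x\in\Lg\mid x\cdot m=0\}$; this is a Lie subalgebra of $\Lg$, and $I$ restricted to it is zero, so injectivity of $I$ forces $\Ann(m)=0$. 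But $\dim\Lg\cdot m\le\dim M=n=\dim\Lg$ would then be an equality, so the orbit map is a linear isomorphism $\Lg\xrightarrow{\sim}M$; transporting the module structure back, $\Lg$ acts on itself with $x\cdot(y\cdot m)$ corresponding to a faithful action, and one checks this makes $\Lg$ into a module isomorphic to... the adjoint-type situation, which still must have a nontrivial annihilator for any single vector because a semisimple Lie algebra of dimension $\ge 3$ cannot act on an $n$-dimensional space with a vector whose annihilator is trivial (the annihilator of any vector in any nonzero representation of $\Ls\Ll_2$, hence of any semisimple $\Lg$ by restriction along an $\Ls\Ll_2$-triple, is nonzero once $\dim\Lg\ge 3$, since $\dim\Ls\Ll_2=3$ but a single $\Ls\Ll_2$-vector has at most a $1$-dimensional... ). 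The clean way: pick a standard $\Ls\Ll_2$-triple $(e,h,f)\subseteq\Lg$; if $x\cdot m$ were injective then already $e\cdot m, h\cdot m, f\cdot m$ are linearly independent, and the $\Ls\Ll_2$-submodule they generate, together with weight considerations ($h\cdot m$ decomposes $m$ into weight components, on each of which $e$ or $f$ eventually kills), produces $0\ne x\in\s\{e,h,f\}$ with $x\cdot m=0$ as soon as $m$ lies in finitely many weight spaces — contradiction.

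The main obstacle, and where I would spend the most care, is exactly this last linear-algebra step: showing that for a fixed vector $m$ in a finite-dimensional module the map $x\mapsto x\cdot m$ has nontrivial kernel on a semisimple $\Lg$. The slick route is representation-theoretic via an $\Ls\Ll_2$-triple: decompose $M=\bigoplus M_\la$ into $h$-eigenspaces and write $m=\sum m_\la$; since only finitely many $\la$ occur, let $\la_{\max}$ be the top weight with $m_{\la_{\max}}\ne 0$ and $\la_{\min}$ the bottom; then $e\cdot m$ has top component $e\cdot m_{\la_{\max}-2}$-type shifts and one shows a suitable polynomial in $e$ (or the single element $e$ when $\la_{\max}\le 0$, resp.\ $f$ when $\la_{\min}\ge 0$, and $h$-adjusted combinations otherwise) annihilates $m$; more uniformly, the subspace $U=\s\{m, e\cdot m, e^2\cdot m,\dots\}\cap(\text{lowest weight line})$ is nonzero and its preimage gives the kernel element. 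Once the kernel is nonzero, $I=\partial m$ kills it, contradicting that $I$ is bijective (hence injective), which finishes the proof. I would also remark that the special case $\Lg=\Ls\Ll_2(K)$ in the preceding example is subsumed, and that characteristic zero is used twice: once for Whitehead's lemma and once for the $\Ls\Ll_2$-theory (complete reducibility of finite-dimensional $\Ls\Ll_2$-modules).
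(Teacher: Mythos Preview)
Your setup through Whitehead's lemma is correct and matches the paper exactly: assuming a pre-Lie structure, the identity is a nonsingular $1$-cocycle, and Whitehead forces $I(x)=x\cdot e$ for some fixed $e$ (your $m$), i.e.\ $R(e)=\id$. The paper then finishes in one line with a trace argument: since $L$ is a representation of the perfect Lie algebra $\Lg$, every $L(x)$ is a sum of commutators and has trace zero; since $\ad(x)=L(x)-R(x)$ and $\tr\ad(x)=0$, every $R(x)$ has trace zero as well; but $R(e)=\id$ has trace $n>0$, a contradiction.

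Your alternative route, producing a nonzero element of $\Ann(m)$ by restricting to an $\Ls\Ll_2$-triple and invoking weight combinatorics, has a genuine gap. It is \emph{not} true that every vector in a finite-dimensional $\Ls\Ll_2$-module has nonzero annihilator inside $\Ls\Ll_2$. For instance, in $V_1\oplus V_1$ (two copies of the standard representation) the vector $m=u_1+w_{-1}$ satisfies $h\cdot m=u_1-w_{-1}$, $e\cdot m=w_1$, $f\cdot m=u_{-1}$, which are linearly independent; the same phenomenon occurs for a generic vector in the $4$-dimensional irreducible. Your sketched argument (``$e$ annihilates $m$ when $\la_{\max}\le 0$'', etc.) conflates nilpotence of the operator $e$ on $M$ with the existence of a \emph{linear} combination $\al e+\be h+\ga f$ killing a fixed vector --- these are different statements. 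Consequently, for any semisimple $\Lg$ with $\dim\Lg\ge 4$, restricting to an $\Ls\Ll_2$-subalgebra cannot be expected to yield the kernel element you need. The conclusion $\Ann_{\Lg}(m)\neq 0$ when $\dim M=\dim\Lg$ is in fact true, but its proof passes through the trace identity above rather than through $\Ls\Ll_2$-restriction.
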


\begin{proof}
Let $\Lg$ be $n$-dimensional. Suppose that $\Lg$ admits a pre-Lie algebra structure. Then we have 
$I\in Z^1(\Lg,\Lg_L)$. By Whitehead's first Lemma, $I\in B^1(\Lg,\Lg_L)$. Hence there exists an $e\in \Lg$ with 
$R(e)=I$, where $R(x)$ denotes the right multiplication operator. The adjoint operators $\ad (x)=L(x)-R(x)$ have trace 
zero, since $\Lg$ is perfect. So do all $L(x)$ and hence all $R(x)$. Then we obtain $n=\tr (I)=\tr (R(e))=0$, 
a contradiction.
\end{proof}

Helmstetter \cite{HEL} proved more generally that if $\Lg$ is {\em perfect}, i.e., if $\Lg=[\Lg,\Lg]$, then $\Lg$ does
not admit a pre-Lie algebra structure. We have the following canonical bijections (up to suitable equivalence).

\begin{prop}\label{7.9}
There is a canonical bijection between left-invariant affine structures on $G$
and pre-Lie algebra structures on $\Lg$.
\end{prop}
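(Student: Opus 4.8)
The plan is to make the correspondence in Proposition \ref{7.9} fully explicit by translating the differential-geometric data of a left-invariant affine structure on $G$ into a bilinear product on $\Lg=T_eG$, and then checking that the defining identities of a pre-Lie algebra structure correspond exactly to flatness and vanishing torsion of the associated left-invariant connection. First I would recall that a left-invariant affine structure on $G$ is the same thing as a left-invariant flat torsion-free affine connection $\nabla$ on $G$. Given such a $\nabla$, define a product on $\Lg$ by $x\cdot y:=\nabla_X Y|_e$, where $X,Y$ are the left-invariant vector fields with $X_e=x$, $Y_e=y$; left-invariance of $\nabla$ guarantees $\nabla_X Y$ is again left-invariant, so $x\cdot y\in\Lg$ is well defined and the product is bilinear. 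Conversely, from any bilinear product on $\Lg$ one builds a left-invariant connection by declaring $\nabla_X Y$ to be the left-invariant vector field extending $x\cdot y$ and extending $C^\infty(G)$-linearly in $X$ and via the Leibniz rule in $Y$; these two assignments are visibly inverse to one another.

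Next I would compute the torsion and curvature of $\nabla$ in terms of the product. The torsion tensor is $T(X,Y)=\nabla_XY-\nabla_YX-[X,Y]$, which at $e$ reads $x\cdot y-y\cdot x-[x,y]$; hence $\nabla$ is torsion-free precisely when $[x,y]=x\cdot y-y\cdot x$ for all $x,y$, which is the first defining identity of a pre-Lie algebra structure on $\Lg$. The curvature is $R(X,Y)Z=\nabla_X\nabla_YZ-\nabla_Y\nabla_XZ-\nabla_{[X,Y]}Z$; evaluating on left-invariant fields and at $e$ this becomes $x\cdot(y\cdot z)-y\cdot(x\cdot z)-[x,y]\cdot z$, so $\nabla$ is flat precisely when $[x,y]\cdot z=x\cdot(y\cdot z)-y\cdot(x\cdot z)$, the second defining identity. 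Thus $\nabla$ is flat and torsion-free if and only if $(x,y)\mapsto x\cdot y$ is a pre-Lie algebra structure on $\Lg$ in the sense of the preceding definition (equivalently, since the left-symmetry identity follows from these two, the associator is symmetric in the first two arguments). Finally I would note that the passage $\nabla\leftrightarrow(\Lg,\cdot)$ is compatible with the natural notions of equivalence — affine diffeomorphisms of $G$ commuting with left translations correspond to Lie algebra isomorphisms intertwining the products — so the bijection is canonical in the asserted sense.

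The main obstacle I anticipate is purely bookkeeping rather than conceptual: one must verify that the formula $\nabla_X Y:=\widetilde{x\cdot y}$ on left-invariant fields really does extend to a genuine connection on all of $G$ — i.e. that the $C^\infty(G)$-bilinear extension is well defined, independent of the chosen frame, and that left-invariance is preserved — and that the tensorial identities for $T$ and $R$ computed above in the left-invariant frame actually compute the torsion and curvature tensors globally (which holds because both are tensors, so evaluating on a frame of left-invariant fields suffices). A secondary point requiring care is the equivalence between a left-invariant affine \emph{structure} in the atlas-of-charts sense of the earlier definition and the existence of a left-invariant flat torsion-free connection; this is the standard fact that affinely flat manifolds are exactly those carrying a flat torsion-free connection, and I would cite it or sketch the developing-map argument. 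Once these identifications are in place, matching the two pairs of identities is immediate and completes the proof.
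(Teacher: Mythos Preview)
Your argument is correct and is precisely the standard proof of this correspondence: identify a left-invariant affine structure with a left-invariant flat torsion-free connection, encode the connection by the bilinear product $x\cdot y=(\nabla_XY)_e$ on $\Lg$, and then match vanishing torsion with $[x,y]=x\cdot y-y\cdot x$ and vanishing curvature with $[x,y]\cdot z=x\cdot(y\cdot z)-y\cdot(x\cdot z)$. The bookkeeping points you flag (that the extension to a global connection is well defined, and that the atlas definition of an affine structure is equivalent to having a flat torsion-free connection) are indeed the only things requiring care, and both are classical.

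Note, however, that the paper does not actually supply a proof of Proposition~\ref{7.9}; it is stated as a known result (essentially going back to Koszul and Vinberg, cf.\ also \cite{MIL, BU24}) and used without further justification. So there is no ``paper's own proof'' to compare against. Your write-up would in fact fill that gap, and nothing in it conflicts with how the result is used elsewhere in the text.
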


\begin{prop}\label{7.10}
There is a canonical bijection between simply transitive affine actions of $G$ and complete pre-Lie algebra structures 
on $\Lg$.
\end{prop}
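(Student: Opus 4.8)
The plan is to obtain Proposition~\ref{7.10} by restricting the bijection of Proposition~\ref{7.9} to the ``complete'' structures on each side and then composing with the canonical bijection, recorded in Section~6, between complete left-invariant affine structures on $G$ and simply transitive affine actions of $G$ on $\R^n$. Throughout one takes $G$ connected and simply connected, so that a homomorphism $\Lg\to\End(\R^n)\ltimes\R^n$ into the affine Lie algebra integrates to a homomorphism $G\to\Aff(\R^n)$.

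First I would unwind Proposition~\ref{7.9}. A pre-Lie algebra structure $\cdot$ on $\Lg$ produces the affine representation $\varphi\colon\Lg\to\End(\Lg)\ltimes\Lg$, $x\mapsto(L(x),x)$, regarded as the affine transformation $v\mapsto L(x)v+x$; this is a Lie algebra homomorphism precisely because the first pre-Lie identity $[x,y]=x\cdot y-y\cdot x$ matches the translation parts and the second gives $L([x,y])=[L(x),L(y)]$. Integrating yields $\Phi\colon G\to\Aff(\Lg)$, and the orbit map $o_\cdot\colon G\to\Lg$, $g\mapsto\Phi(g)(0)$, has differential at $e$ equal to $\id_\Lg$ (the value of $\varphi(x)$ at the origin is $x$), hence is a local diffeomorphism everywhere by left-equivariance. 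This is exactly the left-invariant affine structure attached to $\cdot$ under Proposition~\ref{7.9}; conversely a left-invariant flat torsion-free connection $\nabla$ gives back $x\cdot y:=\nabla_{x}y$ on left-invariant vector fields.

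By definition a pre-Lie algebra structure $\cdot$ on $\Lg$ is \emph{complete} when the associated left-invariant affine structure is complete, i.e.\ when the étale orbit map $o_\cdot\colon G\to\Lg$ is a bijection (equivalently, an affine diffeomorphism $\widetilde G\to\R^n$). With this definition the assertion is then formal: Proposition~\ref{7.9} restricts to a bijection between complete pre-Lie structures on $\Lg$ and complete left-invariant affine structures on $G$, and the Section~6 bijection turns the latter into simply transitive affine actions of $G$; naturality (equivariance under $\Aut(\Lg)$ on one side, affine conjugation on the other) is inherited from Proposition~\ref{7.9} together with the Section~6 statement.

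The substantive point, which I would state and attribute rather than reprove in a survey, is the intrinsic algebraic characterization of completeness (Segal; see also Helmstetter~\cite{HEL} and Kim): $\cdot$ is complete if and only if $I+R(x)$ is invertible for every $x\in\Lg$, and, in the nilpotent case that dominates these applications, if and only if every right multiplication $R(x)$ is nilpotent. The easy half is that a bijective étale orbit map $G\to\Lg$ is automatically a diffeomorphism, being a covering of the simply connected space $\R^n$, so that $\widetilde G=G$ becomes flat-affinely $\R^n$. The hard half is to produce, from the algebraic hypothesis, surjectivity and injectivity of $o_\cdot$: here one uses that nilpotency of the operators $R(x)$ forces the affine vector fields $v\mapsto L(x)v+x$ and their flows to be polynomial of bounded degree, so that $o_\cdot$ is a proper polynomial local diffeomorphism and hence global. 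Controlling this developing map is the main obstacle, and it is precisely the content of Segal's and Kim's theorems, which I would invoke rather than rederive.
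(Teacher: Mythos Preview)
The paper does not prove Proposition~\ref{7.10}; it is stated as a known fact in this survey, with the key equivalence ``complete left-invariant affine structure $\Leftrightarrow$ all $R(x)$ nilpotent'' attributed to Segal~\cite{SEG} in the sentence immediately following. Your proposal is therefore not competing with a proof in the paper but rather supplying one, and the strategy you outline---restrict the bijection of Proposition~\ref{7.9} to complete structures and compose with the Section~6 correspondence between complete left-invariant affine structures and simply transitive actions---is exactly the intended argument.

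One small correction: you write that the algebraic criterion ``every $R(x)$ is nilpotent'' characterizes completeness only ``in the nilpotent case''. In fact Segal's theorem (and the paper's definition just after Proposition~\ref{7.10}) gives this over any field of characteristic zero without assuming $\Lg$ nilpotent: completeness, invertibility of $I+R(x)$ for all $x$, and nilpotency of all $R(x)$ are all equivalent. So you can drop that qualifier. Also note that the paper takes the algebraic condition as the \emph{definition} of a complete pre-Lie structure, whereas you take the geometric one as primary and treat the algebraic one as a theorem; either convention is fine, but be consistent with the paper if you are writing within it.
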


Here a pre-Lie algebra structure on $\Lg$ is {\em complete}, if all right multiplications
$R(x)$ in $\End(\Lg)$ are nilpotent. A left-invariant affine structure on $G$ is complete if and only if
the corresponding pre-Lie algebra structure on the Lie algebra of $G$ is complete, see \cite{SEG}.
Hence the algebraic analogue of Milnor's question is as follows.

\begin{qu}[Milnor 1977]\label{7.11}
Does every solvable Lie algebra over a field of characteristic zero admit a (complete)
pre-Lie algebra structure?
\end{qu}

In the nilpotent case the different versions of Milnor's question are equivalent. By Proposition $\ref{5.3}$
we obtain also a correspondence to affine crystallographic actions. The counterexamples to Milnor's question
are given by $n$-dimensional nilpotent Lie algebras not admitting a faithful linear representation of degree $n+1$. 
This is based on the following important observation, see \cite{BEN}.

\begin{prop}
Let $G$ be a $n$-dimensional Lie algebra $\Lg$ over a field $K$ of characteristic zero. Suppose that $\Lg$ admits a
pre-Lie algebra structure. Then $\Lg$ admits a faithful linear Lie algebra representation
$\phi\colon \Lg\ra \mathfrak{gl}_{n+1}(K)$ of degree $n+1$.
\end{prop}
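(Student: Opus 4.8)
The plan is to carry out the standard ``affinization'' of a pre-Lie structure. The ingredients are already assembled above: the left multiplications give a Lie algebra representation $L\colon\Lg\ra\mathfrak{gl}(\Lg)$, $x\mapsto L(x)$, because $L([x,y])=[L(x),L(y)]$, and the identity map is a $1$-cocycle, $I([x,y])=L(x)I(y)-L(y)I(x)$, which is just the pre-Lie identity $[x,y]=x\cdot y-y\cdot x$. I want to splice these two pieces of data into a single matrix representation on a space of dimension $n+1$.

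Concretely, identify $K^{n+1}$ with $\Lg\oplus K$, writing vectors as pairs $(v,t)$ with $v\in\Lg$ and $t\in K$, and define $\phi\colon\Lg\ra\mathfrak{gl}_{n+1}(K)$ by
\[
\phi(x)=\begin{pmatrix} L(x) & x \\ 0 & 0 \end{pmatrix},
\qquad\text{equivalently}\qquad
\phi(x)(v,t)=(x\cdot v+t\,x,\,0),
\]
where the upper-right block denotes the column vector $x\in\Lg$. Two short verifications then finish the proof. First, $\phi$ respects brackets: block multiplication gives $\phi(x)\phi(y)=\left(\begin{smallmatrix} L(x)L(y) & x\cdot y\\ 0 & 0\end{smallmatrix}\right)$, so the commutator $[\phi(x),\phi(y)]$ has upper-left block $[L(x),L(y)]=L([x,y])$ and upper-right block $x\cdot y-y\cdot x=[x,y]$ — the two entrywise identities being precisely the two defining relations of a pre-Lie algebra structure on $\Lg$ — whence $[\phi(x),\phi(y)]=\phi([x,y])$. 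Second, $\phi$ is injective: if $\phi(x)=0$, then applying it to $(0,1)$ gives $x=0$. Since $\dim(\Lg\oplus K)=n+1$, this produces the desired faithful representation of degree $n+1$.

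There is essentially no technical obstacle here — the argument is a direct computation using only the two pre-Lie identities, and the characteristic-zero hypothesis is inherited from the ambient discussion and plays no role, so I would remark that $\phi$ works over an arbitrary field. The single point worth flagging is \emph{why} one adjoins a one-dimensional summand: that extra line is exactly what lets the cocycle $I$ be recorded as an off-diagonal block sitting over the module $\Lg_L$, and it is needed because $L$ by itself need not be faithful (its kernel $\{x\in\Lg : x\cdot\Lg=0\}$ can be nonzero, e.g.\ for the zero product on an abelian $\Lg$). This $\phi$ is the infinitesimal counterpart of the simply transitive affine action on $\R^n$ attached to a \emph{complete} pre-Lie structure, here in the purely algebraic setting where completeness is not imposed.
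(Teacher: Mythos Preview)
Your proof is correct and is exactly the standard construction. The paper itself does not write out a proof of this proposition; it merely cites \cite{BEN}. That said, your argument is precisely the one the paper's preceding discussion is set up for: the paper records that $L\colon\Lg\ra\mathfrak{gl}(\Lg)$ is a representation and that the identity $I$ is a $1$-cocycle in $Z^1(\Lg,\Lg_L)$, and your map $\phi(x)=\left(\begin{smallmatrix} L(x) & x\\ 0 & 0\end{smallmatrix}\right)$ is the affine representation on $\Lg_L\oplus K$ determined by the pair $(L,I)$. Your remark that characteristic zero is not used is also correct.
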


This motivates to  study a refinement of Ado's theorem.

\begin{defi}
Let $\Lg$ be a finite-dimensional Lie algebra over a field $K$ of dimension $n$. Denote by $\mu(\Lg)$
the minimal dimension of a faithful linear representation of $\Lg$.
\end{defi}

By the Ado-Iwasawa theorem, $\mu(\Lg)$ is always finite. However the proofs for Ado's theorem do not give
good upper bounds for $\mu(\Lg)$. The following result was proved in \cite{BU7}.

\begin{thm}
Let $\Lg$ be a $k$-step nilpotent Lie algebra of dimension $n$ over a field of characteristic zero.
Then we have
\[
\mu(\Lg)\le \sum_{j=0}^{k} \binom{n-j}{k-j}p(j)< 3\cdot \frac{2^n}{\sqrt{n}}.
\]
\end{thm}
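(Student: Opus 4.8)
The plan is to realize the bound through an explicit construction of a faithful module and then estimate its dimension combinatorially. First I would choose a descending central-type filtration of $\Lg$, say $\Lg=\Lg^{(1)}\supseteq \Lg^{(2)}\supseteq\cdots\supseteq\Lg^{(k+1)}=0$ with $[\Lg^{(i)},\Lg^{(j)}]\subseteq\Lg^{(i+j)}$ (the lower central series works, since $\Lg$ is $k$-step nilpotent), and pick an adapted basis refining this filtration, so that each basis vector carries a well-defined \emph{weight} in $\{1,\dots,k\}$. The idea is to build the representation inside the universal enveloping algebra $U(\Lg)$, more precisely in a suitable finite-dimensional truncation: one considers the left regular action of $\Lg$ on the quotient $U(\Lg)/J$, where $J$ is the left ideal generated by all PBW monomials of total weight exceeding $k$. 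Faithfulness comes from the fact that $\Lg$ itself injects into this quotient (a degree-one monomial has weight at most $k$, and the PBW theorem separates it from $J$), so $x\mapsto (\text{left mult.\ by }x)$ has trivial kernel on $U(\Lg)/J$.

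The next step is to count a spanning set for $U(\Lg)/J$. By PBW, a basis is given by weight-ordered monomials $e_{i_1}\cdots e_{i_m}$ whose total weight is at most $k$; grouping by the multiset of weights used, a monomial contributing weight exactly $j$ corresponds to a partition of some $j\le k$ together with a choice of distinct basis vectors of the prescribed weights. Bounding the number of basis vectors of each weight crudely by $n$ (or more sharply by $n$ minus the number of lower-weight slots already consumed, which is where the $\binom{n-j}{k-j}$ shape enters) and summing over all partitions $j$, where $p(j)$ is the number of partitions of $j$, gives the stated sum $\sum_{j=0}^{k}\binom{n-j}{k-j}p(j)$. A little care is needed to pass from "distinct basis vectors of specified weights" to the binomial coefficient $\binom{n-j}{k-j}$: one argues that the total number of basis vectors of weight $\le k-1$ used, plus a padding argument, is controlled by choosing $k-j$ further indices out of $n-j$, which is the combinatorial heart of the estimate.

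Finally, to get the clean closed-form inequality $\sum_{j=0}^{k}\binom{n-j}{k-j}p(j)<3\cdot 2^n/\sqrt n$, I would estimate each term: $\binom{n-j}{k-j}\le\binom{n}{k}$ and, more usefully, $\sum_{j}\binom{n-j}{k-j}$ telescopes by the hockey-stick identity to $\binom{n+1}{k+1}$-type quantities, while the partition numbers $p(j)$ grow subexponentially (in fact $p(j)=o(c^j)$ for every $c>1$), so a weighted sum of binomials against $p(j)$ is dominated by a constant multiple of $\max_k\binom{n}{k}\approx 2^n/\sqrt{\pi n/2}$. Pinning down the constant $3$ is then a matter of an explicit Stirling-type bound on the central binomial coefficient together with a uniform bound on $\sum_j p(j)\,r^{j}$ for the relevant small $r$.

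\textbf{Main obstacle.} I expect the delicate point to be the passage from the raw PBW count to the precise binomial expression $\sum_{j=0}^{k}\binom{n-j}{k-j}p(j)$: one must organize the weight-$j$ monomials so that the "distinct-indices of prescribed weights" count is bounded \emph{uniformly} by $\binom{n-j}{k-j}$ independently of how the weights are distributed among the graded pieces of $\Lg$, and this requires the right choice of filtration and a careful accounting of which slots are already occupied. The closed-form asymptotic $3\cdot 2^n/\sqrt n$ is comparatively routine analysis once the combinatorial sum is in hand.
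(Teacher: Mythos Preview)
This survey does not itself prove the theorem; it quotes the result from \cite{BU7}. Your outline is in fact the method of that paper: take the quotient of $U(\Lg)$ by the span of PBW monomials of total filtration weight exceeding $k$ (which is a two-sided ideal once one checks that commutators respect the weight filtration), observe that $1\mapsto x$ under left multiplication so the module is faithful, and then count surviving monomials.

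Two points deserve tightening. First, a PBW monomial is a product of basis vectors \emph{with repetition}, so a weight-$j$ monomial corresponds not to a choice of distinct basis vectors but to a multiset; concretely, it is determined by a partition of $j$ whose parts are coloured by basis vectors of the matching weight, and colours may repeat. Second, and more substantially, your explanation of where the factor $\binom{n-j}{k-j}$ comes from is not yet an argument. What actually has to be shown is this: if the adapted basis has $n_i$ elements of weight $i$ for $1\le i\le k$ (and the lower central series of a genuinely $k$-step nilpotent algebra forces $n_i\ge 1$ for every such $i$), then the number of PBW monomials of total weight at most $k$ is maximised at the extremal distribution $n_1=n-k+1$, $n_2=\cdots=n_k=1$, and at that extreme the count equals $\sum_{j=0}^{k}\binom{n-j}{k-j}\,p(j)$ on the nose. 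You correctly flag this as the main obstacle, but the ``padding argument'' you gesture at does not yet supply the maximisation; that is the piece still missing from your sketch. The passage from the sum to $3\cdot 2^{n}/\sqrt{n}$ is, as you say, a routine estimate on central binomial coefficients and partition numbers once the combinatorial bound is in hand.
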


In the general case we have the following result \cite{BU40}.

\begin{thm}
Let $\Lg$ be a Lie algebra with $r$-dimensional solvable radical and nilradical $\Ln$ over an
algebraically closed field of characteristic zero. Then we have
\[
\mu(\Lg)\le \mu(\Lg/\Ln)+ 3\cdot \frac{2^r}{\sqrt{r}}.
\]
\end{thm}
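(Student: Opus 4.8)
The plan is to bound $\mu(\Lg)$ by building a faithful representation out of two pieces: one coming from the semisimple quotient $\Lg/\Ln$ (the Levi part together with whatever acts nontrivially on the radical modulo nilradical), and one coming from the nilradical $\Ln$, which is handled by the previous nilpotent theorem $\mu(\Ln)<3\cdot 2^{\dim\Ln}/\sqrt{\dim\Ln}$. First I would recall the Levi decomposition $\Lg=\Ls\ltimes\Lr$ over the algebraically closed field of characteristic zero, where $\Ls$ is semisimple and $\Lr$ is the solvable radical, and note that $\Ln=[\Lg,\Lg]\cap\Lr$ in characteristic zero contains $[\Lr,\Lr]$ and $[\Ls,\Lr]$, so that $\Lg/\Ln$ is the direct product of $\Ls$ with the abelian quotient $\Lr/\Ln$, hence in particular $\mu(\Lg/\Ln)$ makes sense and is finite by Ado-Iwasawa.

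The key construction is as follows. Take a faithful representation $\psi\colon\Lg/\Ln\to\mathfrak{gl}_m(K)$ of minimal degree $m=\mu(\Lg/\Ln)$, and pull it back along $\Lg\to\Lg/\Ln$ to get a representation $\tilde\psi$ of $\Lg$ whose kernel is exactly $\Ln$. Separately, take a minimal faithful representation $\theta\colon\Ln\to\mathfrak{gl}_d(K)$ with $d=\mu(\Ln)$. The main technical step is to extend (a suitable induced or twisted version of) $\theta$ to all of $\Lg$: since $\Ln$ is an ideal, $\Lg$ acts on $\Ln$ by derivations via the adjoint action, and one wants a representation $\Theta$ of $\Lg$ on a space of dimension $d$ (or close to it) whose restriction to $\Ln$ is faithful. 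The cleanest route is to observe that $\Ln$ nilpotent of class $k$ embeds faithfully in strictly upper triangular matrices, and that the adjoint action of $\Lg$ on $\Ln$ respects the lower central series; one then realizes $\Ln$ inside $\mathfrak{gl}(\Ln\oplus K)$ by the affine-type action $x\mapsto\bigl(\begin{smallmatrix}\ad x & x\\ 0 & 0\end{smallmatrix}\bigr)$ restricted from $\Lg$, but checking this is faithful on $\Ln$ and that it glues to the rest of $\Lg$ requires care. Then $\Theta\oplus\tilde\psi$ is faithful on $\Lg$: its kernel is contained in $\ker\tilde\psi=\Ln$ and $\Theta$ is faithful on $\Ln$. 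Counting dimensions gives $\mu(\Lg)\le\dim\Theta+m$, and the claimed bound follows once $\dim\Theta$ is shown to be at most $3\cdot 2^r/\sqrt r$ with $r=\dim\Lr$; here one uses that $\Ln\subseteq\Lr$, that $\Lr/\Ln$ is abelian, and applies the nilpotent estimate in the form $\mu(\Ln)<3\cdot 2^{\dim\Ln}/\sqrt{\dim\Ln}\le 3\cdot 2^r/\sqrt r$ together with a bookkeeping argument that the extra rows/columns needed to accommodate the abelian part $\Lr/\Ln$ and the derivation action do not increase the power of $2$.

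I expect the main obstacle to be precisely the extension step: producing a faithful-on-$\Ln$ representation of the whole $\Lg$ of dimension bounded by $3\cdot 2^r/\sqrt r$ rather than something larger. Naively inducing $\theta$ from $\Ln$ up to $\Lg$ multiplies the dimension by $\dim\Lg/\Ln$, which is far too lossy; and simply taking $\ad\colon\Lg\to\mathfrak{gl}(\Ln)$ plus a copy of $\Ln$ is faithful on $\Ln$ only if $\Ln$ is centerless, which it never is. The resolution I anticipate is to work with the representation of $\Lg$ on $U(\Ln)/J$ for a carefully chosen left ideal $J$ that is $\Lg$-stable (using that $\Lg$ normalizes $\Ln$), mimicking Birkhoff's embedding for nilpotent Lie algebras but relative to the $\Lg$-action, so that the dimension count is governed by $\Ln$ alone and the solvable but non-nilpotent directions in $\Lr$ contribute only additively via $\mu(\Lg/\Ln)$. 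Verifying $\Lg$-stability of $J$ and that the resulting module has the stated dimension is the crux; everything else is the direct-sum trick and the arithmetic already packaged in the preceding nilpotent theorem.
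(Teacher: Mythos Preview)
The paper does not actually prove this theorem; it is stated as a result imported from \cite{BU40} (Burde--Moens), so there is no proof in the text to compare against. Your outline does align with the strategy of that reference: split off a minimal faithful module for the reductive quotient $\Lg/\Ln$, produce a second $\Lg$-module that is faithful on $\Ln$ with dimension controlled by the solvable radical, and take the direct sum. Your identification of $\Lg/\Ln\cong\Ls\times(\Lr/\Ln)$ and the kernel bookkeeping for $\tilde\psi\oplus\Theta$ are correct.

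That said, what you have is a plan, not a proof, and you say so yourself. The entire weight of the argument lies in the step you flag as the obstacle: building a $\Lg$-module $\Theta$ of dimension at most $3\cdot 2^r/\sqrt{r}$ whose restriction to $\Ln$ is faithful. Your first attempt via $x\mapsto\bigl(\begin{smallmatrix}\ad x & x\\ 0 & 0\end{smallmatrix}\bigr)$ fails exactly as you note (center of $\Ln$), and naive induction blows up the dimension. Your second suggestion, a $\Lg$-stable quotient $U(\Ln)/J$, is indeed the mechanism used in \cite{BU40}: one extends the $\Lg$-action on $\Ln$ by derivations to $U(\Ln)$ and exhibits an explicit $\Lg$-invariant ideal $J$ so that $U(\Ln)/J$ is faithful over $\Ln$ with the combinatorial dimension bound coming from PBW monomials of bounded multidegree. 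But you have not constructed $J$, checked $\Lg$-stability, or verified the dimension count, and the phrase ``a bookkeeping argument that the extra rows/columns \dots\ do not increase the power of $2$'' is not an argument. In particular, the reason the bound is stated with $r=\dim\Lr$ rather than $\dim\Ln$ is precisely that the construction in \cite{BU40} naturally produces a module whose size is governed by the radical; you cannot simply invoke the nilpotent bound for $\Ln$ and then wave away the extension to $\Lg$. Until that quotient is written down and its dimension estimated, the core of the proof is missing.
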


For the $10$-dimensional counterexamples to Milnor's question we proved that $12\le \mu(\Lg)\le 18$, but we do not know 
the exact value in all cases. \\[0.2cm]
The canonical bijections of Proposition $\ref{7.9}$ and  $\ref{7.10}$ can be generalized to nil-affine transformations
and post-Lie algebra structures, see \cite{BU41}.

\begin{thm}
Let $G$ and $N$ be connected and simply connected nilpotent Lie groups. Then there exists a simply transitive action
by nil-affine transformations of $G$ on $N$ if and only if the corresponding pair
of Lie algebras  $(\Lg,\Ln)$ admits a complete post-Lie algebra structure.
\end{thm}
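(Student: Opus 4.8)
The plan is to translate the geometric statement into algebra by differentiating the action, following the pattern already established in Propositions \ref{7.9} and \ref{7.10} for the affine case. First I would recall the structure of $\Aff(N)=\Aut(N)\ltimes N$: its Lie algebra is $\Der(\Ln)\ltimes \Ln$, where $\Ln$ is the Lie algebra of $N$. A homomorphism $\rho\colon G\ra \Aff(N)$ differentiates to a Lie algebra homomorphism $d\rho\colon \Lg\ra \Der(\Ln)\ltimes \Ln$, which we write as $d\rho(x)=(D_x, t(x))$ with $D_x\in\Der(\Ln)$ and $t\colon\Lg\ra\Ln$ linear. The condition that $\rho$ acts simply transitively on $N$ should translate, after identifying the tangent space of $N$ at the identity with $\Ln$, into the condition that the evaluation map $x\mapsto t(x)$ (the linear part of the infinitesimal action at the base point) is a linear isomorphism $\Lg\ra\Ln$. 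Using this isomorphism to transport the $\Lg$-structure onto $\Ln$, one should define the two bilinear products on the common underlying vector space: $x\cdot y := D_x(y)$ coming from the $\Aut(N)$-part, and the bracket $\{x,y\}$ on $\Lg$ transported to $\Ln$. The post-Lie axioms (the mixed Jacobi-type identity relating $[\,,\,]_{\Ln}$, $\{\,,\,\}_{\Lg}$ and $x\cdot y$) should come out exactly from the requirement that $d\rho$ is a Lie algebra homomorphism into the semidirect product, i.e. from unpacking the bracket in $\Der(\Ln)\ltimes\Ln$.

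The completeness condition on the post-Lie structure — that the operators $x\mapsto x\cdot y + \text{(correction terms)}$, or in the standard formulation the maps $z\mapsto z*x$ built from the action, are nilpotent — should correspond to the fact that the image $\rho(G)$ consists of nil-affine transformations whose "translational closure" covers $N$ properly; concretely, that the $G$-action is by transformations which, composed appropriately, exhaust $N$ without accumulation. I would argue this direction by noting that simple transitivity forces the orbit map $G\ra N$ to be a diffeomorphism, and that the nilpotency of $N$ together with the group law on $\Aff(N)$ (which involves the nilpotent exponential) forces the relevant endomorphisms to be nilpotent — this is the nil-affine analogue of the classical statement for $\Aff(\R^n)$ that completeness of the pre-Lie structure is equivalent to all right multiplications being nilpotent, cited from \cite{SEG}.

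For the converse I would start from a complete post-Lie algebra structure on $(\Lg,\Ln)$, use it to build the homomorphism $\Lg\ra\Der(\Ln)\ltimes\Ln$ by reversing the above dictionary (the post-Lie product gives the derivation part, and the identity map gives the translational part), check it is a Lie algebra homomorphism using the post-Lie axioms, and then integrate: since $G$ is simply connected the homomorphism of Lie algebras integrates to $\rho\colon G\ra\Aff(N)$. Completeness of the structure is what guarantees the resulting action is defined on all of $N$ and is simply transitive rather than merely locally so — here one needs that the integrated flows are complete and that the orbit map is a global diffeomorphism, which should follow by a standard argument using that $N$ is diffeomorphic to $\Ln$ via $\exp$ and that nilpotency makes the relevant polynomial maps globally invertible. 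The main obstacle, I expect, is precisely this last integration-and-globalization step: differentiating is formal, but showing that completeness of the algebraic structure is exactly the right condition to pass from an infinitesimally simply transitive action to a genuinely simply transitive global one requires care with the nilpotent group structure of $\Aff(N)$ and with properness of the action; this is where I would lean most heavily on the cited work \cite{BU41} and on the affine prototype in \cite{SEG}.
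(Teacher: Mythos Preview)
The paper is a survey and does not prove this theorem; it is stated with a reference to \cite{BU41}, so there is no in-text proof to compare against. Your outline is essentially the approach taken in that reference: differentiate $\rho$ to a Lie algebra homomorphism $\Lg\to\Der(\Ln)\ltimes\Ln$, write it as $x\mapsto(D_x,t(x))$, use simple transitivity to see that $t$ is a linear isomorphism identifying the underlying vector spaces, set $x\cdot y=D_x(y)$, and recover the three post-Lie axioms respectively from the derivation property of $D_x$, from $D$ being a Lie algebra map, and from the cocycle identity for $t$. The converse by integration is likewise the right idea.

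The one place where your sketch is soft is the treatment of completeness. You do not commit to a definition (``correction terms'', ``$z\mapsto z*x$''), and the geometric gloss about ``translational closure'' and the action ``exhausting $N$ without accumulation'' is not the right picture. In the nilpotent setting the action is automatically globally defined --- both $G$ and $N$ are diffeomorphic to their Lie algebras via $\exp$, and all the maps in sight are polynomial --- so nothing needs to be said about flows being complete. The actual content of completeness is that the orbit map $G\to N$, $g\mapsto\rho(g)(e)$, is a global diffeomorphism, and on the algebraic side this is equivalent to the nilpotency of the right-multiplication operators $R(x)\colon y\mapsto y\cdot x+\{y,x\}$, exactly parallel to Segal's criterion \cite{SEG} in the pre-Lie case. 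Making this step precise (polynomial bijectivity from nilpotency of these operators) is where the real work lies, and you correctly flag it as the main obstacle.
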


In the classical case $N=\R^n$ a complete post-Lie algebra structure on $(\Lg,\R^n)$ is just a complete 
{\em pre-Lie algebra structure} on $\Lg$. In the other extreme case $G=\R^n$ a complete post-Lie algebra structure on
$(\R^n,\Ln)$ is a complete {\em LR-structure} on $\Ln$, see \cite{BU38}.\\[0.2cm]
The definition of a post-Lie algebra and a post-Lie algebra structure are as follows \cite{VAL,BU41}.

\begin{defi} 
A {\em post-Lie algebra} $(V,\cdot,\{\, ,\})$ is a vector space $V$ over a field
$K$ equipped with two $K$-bilinear operations $x\cdot y$ and $\{x,y\}$, such that
$\Lg=(V,\{\, ,\})$ is a Lie algebra, and
\begin{align*}
\{x,y\}\cdot z & = (y\cdot x)\cdot z - y\cdot (x\cdot z) -(x\cdot y)\cdot z+x\cdot (y\cdot z) \\
x\cdot \{y,z\} & = \{x\cdot y,z\}+\{y,x\cdot z\} 
\end{align*}
for all $x,y,z \in V$.
\end{defi}

Note that if $\Lg$ is abelian then $(V,\cdot)$ is a pre-Lie algebra. We can associate to a post-Lie algebra 
$(V,\cdot,\{,\})$ a second Lie algebra $\Ln=(V,[\, ,])$ via
\begin{align*}
[x,y] & := x\cdot y-y\cdot x +\{x,y\}. 
\end{align*}
This Lie bracket satisfies the following identity
\begin{align*}
[x,y]\cdot z & = x\cdot (y\cdot z)-y\cdot (x\cdot z), 
\end{align*}
i.e., the post-Lie algebra is a  {\em left module} over the Lie algebra $\Ln$. 

\begin{defi}\label{plt}
Let $(\Lg, [x,y])$, $(\Ln, \{x,y\})$ be two Lie brackets on a vector space $V$.
A {\em post-Lie algebra structure on the pair $(\Lg,\Ln)$} is a $K$-bilinear product $x\cdot y$ satisfying the identities
\begin{align*}
x\cdot y -y\cdot x & = [x,y]-\{x,y\} \\
[x,y]\cdot z & = x\cdot (y\cdot z)-y\cdot (x\cdot z) \\
x\cdot \{y,z\} & = \{x\cdot y,z\}+\{y,x\cdot z\}
\end{align*}
for all $x,y,z \in V$.
\end{defi}

These identities imply the identities given before, so that $(V,\cdot,[\, ,])$ is a post-Lie algebra with associated 
Lie algebra $\Ln$. If $\Ln$ is abelian then the conditions of a post-Lie algebra structure reduce to the conditions
\begin{align*}
[x,y] & = x\cdot y-y\cdot x, \\
[x,y]\cdot z & = x\cdot (y\cdot z)-y\cdot (x\cdot z),
\end{align*}
so that $x\cdot y$ is a pre-Lie algebra structure on $\Lg$. On the other hand, if $\Lg$ is abelian then the 
conditions reduce to
\begin{align*}
x\cdot y-y\cdot x & = -\{x,y\} \\
x\cdot (y\cdot z)& = y\cdot (x\cdot z), \\
(x\cdot y)\cdot z & =(x\cdot z)\cdot y,
\end{align*}
so that $-x\cdot y$ is an LR-structure on $\Ln$. 

\section{Milnor's question for nil-affine transformations}

Milnor's question $\ref{6.6}$ and the algebraic version $\ref{7.11}$ can be asked more generally for
nil-affine transformations and post-Lie algebra structures. So we may ask the
following existence question.

\begin{qu}
Exactly which pairs of Lie algebras $(\Lg,\Ln)$ over a given vector space $V$ over a field of characteristic zero 
admit a post-Lie algebra structure?
\end{qu}

For the correspondence to nil-affine transformations we would need to consider {\em complete} post-Lie algebra
structures, see \cite{BU41}, but we would like to ask more generally for all post-Lie algebra structures.
Of course this question is very ambitious and it is not clear how a complete answer should look like.
It seems reasonable to study here first certain algebraic properties of $\Lg$ and $\Ln$, such as being abelian, nilpotent,
solvable, simple, semisimple, reductive and complete as the most basic ones. \\[0.2cm]
If $\Ln$ is abelian we are back to Milnor's original question and we ask exactly which Lie algebras $\Lg$ admit a 
pre-Lie algebra structure. This is as we already know a difficult question and there are only partial answers. 
For example, if $\Lg$ is semisimple or more generally perfect, then $\Lg$ does not admit any pre-Lie algebra structure, 
see Proposition $\ref{7.8}$ and \cite{HEL}. If $\Lg$ is reductive, the question is already open. Certainly $\Lg\Ll_n(K)$ 
does admit a pre-Lie algebra structure, but on the other hand, there are several restrictions. For example, we have the 
following result, see \cite{BU4}.

\begin{prop}
Let $\Lg=\La\oplus \Ls$ be a reductive Lie algebra, where $\Ls$ is simple and $\La$ is the center of $\Lg$
with $\dim (\La)=1$. Then $\Lg$ admits a pre-Lie algebra structure if and only if $\Ls\cong \Ls\Ll_n(K)$ for some
$n\ge 2$.
\end{prop}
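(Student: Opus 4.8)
The plan is to prove the two implications separately, the ``if'' part being routine and the ``only if'' part reducing to a representation-theoretic classification. For the ``if'' direction: if $\Ls\cong\Ls\Ll_n(K)$ then, the bracket of $\Lg$ being determined by centrality of $\La$ and by the simple bracket on $\Ls$, one has $\Lg\cong\Lg\Ll_n(K)$, which is the commutator Lie algebra of the associative algebra $M_n(K)$; since every associative product is in particular a pre-Lie product, ordinary matrix multiplication is already a pre-Lie algebra structure on $\Lg$. For the converse I would work over an algebraically closed field of characteristic zero (a pre-Lie structure is inherited under extension of scalars) and fix a generator $z$ of $\La$.

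So suppose $\Lg$ admits a pre-Lie algebra structure. By the characterisation of pre-Lie structures through modules with a nonsingular cocycle (\cite{BU2}), there is a $\Lg$-module $M$ with $\dim M=\dim\Lg=\dim\Ls+1$ and a nonsingular cocycle $I\in Z^1(\Lg,M)$. First I would restrict $I$ to $\Ls$: since $\Ls$ is semisimple, Whitehead's first lemma gives $H^1(\Ls,M)=0$, so $I|_{\Ls}$ is a coboundary, $I(s)=s\cdot m_0$ for a fixed $m_0\in M$ and all $s\in\Ls$. Next, writing out the cocycle condition for a pair $(z,s)$ with $s\in\Ls$ and using that $z$ is central (so its action $A$ on $M$ commutes with the $\Ls$-action), I would deduce that $A m_0-I(z)$ is $\Ls$-invariant. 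Finally, nonsingularity of $I$ forces $s\mapsto s\cdot m_0$ to be injective on $\Ls$; that is, $m_0$ has trivial $\Ls$-stabiliser, so $M|_{\Ls}$ is an $\Ls$-module of dimension $\dim\Ls+1$ possessing a vector whose $\Ls$-stabiliser is trivial. Conversely, any such module, together with a compatible endomorphism playing the role of $A$ and a suitable invariant, can be assembled into a pre-Lie structure on $\Lg$, so the whole question is reduced to this module-theoretic one.

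The hard part, and the step I expect to be the real obstacle, is the purely representation-theoretic statement that a simple Lie algebra $\Ls$ admits a module of dimension $\dim\Ls+1$ with a vector of trivial stabiliser exactly when $\Ls$ is of type $A$. For $\Ls\cong\Ls\Ll_n(K)$ one takes $M$ to be $n$ copies of the standard representation $K^n$ — total dimension $n^2=\dim\Ls\Ll_n(K)+1$ — and $m_0$ the ``identity matrix''; this just recovers the matrix-multiplication structure above. For $\Ls$ not of type $A$ one has to run through the decompositions of a $(\dim\Ls+1)$-dimensional $\Ls$-module into irreducibles and eliminate all of them: the adjoint module never occurs, because every element of $\Ls$ lies in its own centraliser and hence the adjoint representation has no vector with trivial stabiliser; and for a non-type-$A$ simple Lie algebra the remaining fundamental representations are too few and individually too large, so that every direct sum reaching dimension $\dim\Ls+1$ still has a positive-dimensional generic stabiliser. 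This is a finite verification against the classification of simple Lie algebras, and it can also be read off from the known tables of generic stabilisers of linear actions. Excluding all $\Ls$ outside type $A$ and noting that a simple Lie algebra of type $A$ is $\Ls\Ll_n(K)$ for some $n\ge2$ completes the argument.
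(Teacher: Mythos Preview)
The paper does not actually prove this proposition; it only states it with a reference to \cite{BU4}, and the sentence following the statement directs the reader to the literature on \'etale affine representations of reductive groups \cite{BAU,BU4,BU53,BU56}. Your outline is exactly the approach taken in that reference: convert a pre-Lie structure into an $n$-dimensional $\Lg$-module $M$ with a nonsingular $1$-cocycle (Proposition~7.4 of the present paper), kill the cocycle on $\Ls$ via Whitehead's lemma to obtain a point $m_0\in M$, and read off from injectivity that the $\Ls$-stabiliser of $m_0$ is zero. This is precisely the \'etale condition, and your ``if'' direction via matrix multiplication on $\Lg\Ll_n(K)$ is the standard one.

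Two comments on the final classification step, which you rightly flag as the real content. First, the phrase ``the remaining fundamental representations are too few and individually too large'' is not quite accurate: for several non-$A$ types (e.g.\ $B_2$, $G_2$, $D_4$) there \emph{are} direct sums of small representations hitting the target dimension $\dim\Ls+1$, and pure dimension-counting does not dispose of them; one genuinely has to compute generic isotropy algebras (or quote the Sato--Kimura/\`Elashvili tables) to see that none of these have a point with trivial stabiliser. Second, your parenthetical ``Conversely, any such module \ldots\ can be assembled into a pre-Lie structure'' is not needed for the proposition as stated---you have already handled the ``if'' direction explicitly---but if you want to keep it, note that the assembly requires choosing the action of $z$ (scalars on the $\Ls$-isotypic pieces) and the value $I(z)$ so that $I$ becomes bijective, which is an extra (easy, but not automatic) step. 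With those caveats your plan matches \cite{BU4}.
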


For more results and details concerning the reductive case and \'etale affine representations of reductive groups
see \cite{BAU,BU4,BU53,BU56}. \\[0.2cm]
On the other hand, if $\Lg$ is abelian, then we ask which Lie algebras exactly admit an LR-structure. This question
is more accessible and we have obtained several results, see \cite{BU34}.

\begin{prop}
Let $\Ln$ be a Lie algebra admitting an LR-structure. Then $\Ln$ is two-step solvable.
\end{prop}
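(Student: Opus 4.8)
The plan is to avoid any structure theory and instead run a short identity chase with the associator of the LR-product. Write $x\cdot y$ for the LR-structure on $\Ln$; normalizing the overall sign if necessary we may assume $[x,y]=x\cdot y-y\cdot x$, while the left- and right-commutativity identities recorded above,
\[
x\cdot(y\cdot z)=y\cdot(x\cdot z),\qquad (x\cdot y)\cdot z=(x\cdot z)\cdot y,
\]
are insensitive to this sign. Introduce the associator $A(x,y,z):=(x\cdot y)\cdot z-x\cdot(y\cdot z)$. I claim the proposition reduces to the single assertion $A(p,[u,v],r)=0$ for all $p,r,u,v\in\Ln$, i.e. that the associator vanishes whenever its middle argument lies in $[\Ln,\Ln]$.

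To see the reduction, I would first express the iterated bracket through $A$. Expanding $[[x,y],z]=[x,y]\cdot z-z\cdot[x,y]$, substituting $[x,y]=x\cdot y-y\cdot x$, using right-commutativity on $[x,y]\cdot z$ and left-commutativity on $z\cdot[x,y]$, everything collapses to
\[
[[x,y],z]=A(x,z,y)-A(y,z,x).
\]
Hence, once the claim is known, choosing $z=[u,v]$ gives $[[x,y],[u,v]]=A(x,[u,v],y)-A(y,[u,v],x)=0$ for all $x,y,u,v$, which is precisely $[[\Ln,\Ln],[\Ln,\Ln]]=0$, i.e. two-step solvability.

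The core of the proof is the claim itself. I would expand $A(p,[u,v],r)=(p\cdot[u,v])\cdot r-p\cdot([u,v]\cdot r)$ and rewrite the two factors via the axioms: left-commutativity gives $p\cdot[u,v]=u\cdot(p\cdot v)-v\cdot(p\cdot u)$, and right-commutativity gives $[u,v]\cdot r=(u\cdot r)\cdot v-(v\cdot r)\cdot u$. Substituting and distributing, $A(p,[u,v],r)$ becomes
\[
\bigl(u\cdot(p\cdot v)\bigr)\cdot r-\bigl(v\cdot(p\cdot u)\bigr)\cdot r-p\cdot\bigl((u\cdot r)\cdot v\bigr)+p\cdot\bigl((v\cdot r)\cdot u\bigr).
\]
Now the decisive observation: right-commutativity turns $\bigl(u\cdot(p\cdot v)\bigr)\cdot r$ into $(u\cdot r)\cdot(p\cdot v)$, and left-commutativity turns $p\cdot\bigl((u\cdot r)\cdot v\bigr)$ into $(u\cdot r)\cdot(p\cdot v)$ as well, so the first and third terms cancel; the second and fourth cancel by the same computation with $u$ and $v$ interchanged. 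Thus $A(p,[u,v],r)=0$, and the proposition follows.

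The only subtle point is this last cancellation: recognizing that a triple product simplified ``from the outside'' by right-commutativity and a triple product simplified ``from the inside'' by left-commutativity both reduce to $(u\cdot r)\cdot(p\cdot v)$. Once the four terms are written down it is forced, but it is the one place where both LR-axioms must be used simultaneously; everything else is bilinearity and bookkeeping. Note that no hypothesis on the ground field or on finite-dimensionality enters anywhere.
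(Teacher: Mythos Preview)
Your argument is correct. The reduction $[[x,y],z]=A(x,z,y)-A(y,z,x)$ is valid, and the key claim $A(p,[u,v],r)=0$ is verified exactly as you say: right-commutativity gives $\bigl(u\cdot(p\cdot v)\bigr)\cdot r=(u\cdot r)\cdot(p\cdot v)$ and left-commutativity gives $p\cdot\bigl((u\cdot r)\cdot v\bigr)=(u\cdot r)\cdot(p\cdot v)$, so the four terms cancel in pairs.

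As for comparison: the present paper is a survey and does not supply a proof of this proposition; it simply records the result and refers to \cite{BU34}. Your self-contained identity chase via the associator is therefore more than the paper itself offers here. The argument in the original reference proceeds along similar elementary lines (showing that $[\Ln,\Ln]\cdot\Ln$ and $\Ln\cdot[\Ln,\Ln]$ lie in suitable places so that the derived series terminates at step two), so your approach is in the same spirit, organized around the single observation that the associator vanishes on commutators in the middle slot. Your remark that no assumptions on the ground field or on finite-dimensionality are needed is also correct and worth keeping.
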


However, not every two-step solvable Lie algebra admits an LR-structure.

\begin{prop}
There are $3$-step nilpotent Lie algebras with $4$ generators of dimension $n\ge 13$ not admitting any LR-structure.
\end{prop}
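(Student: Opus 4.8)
The plan is to exhibit, for each $n \ge 13$, an explicit $3$-step nilpotent Lie algebra $\Ln$ on $n$ generators and show that the defining identities of an LR-structure force a contradiction. Recall from the discussion after Definition~\ref{plt} that an LR-structure on $\Ln$ is a bilinear product $x\cdot y$ with
\begin{align*}
x\cdot y - y\cdot x &= [x,y], \\
x\cdot(y\cdot z) &= y\cdot(x\cdot z), \\
(x\cdot y)\cdot z &= (x\cdot z)\cdot y,
\end{align*}
i.e.\ all left multiplications $L(x)$ commute and all right multiplications $R(x)$ commute (after the sign change in the excerpt; here I absorb it). So the first step is to fix a convenient presentation of $\Ln$: take generators $x_1,\dots,x_n$ with a single ``deep'' bracket relation of weight $3$, arranged so that the center and the derived series are as rigid as possible, and so that $[\Ln,\Ln]$ and $[\Ln,[\Ln,\Ln]]$ are spanned by a controlled set of brackets.

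Next I would use the two commuting-family conditions to restrict the $L(x_i)$ and $R(x_i)$. Because $\Ln$ is nilpotent, a standard argument (which I would reuse from \cite{BU34}) shows that in any LR-structure the operators $L(x)$ and $R(x)$ are nilpotent on $\Ln$; combined with the facts that $\{L(x)\}$ is a commuting family and $\{R(x)\}$ is a commuting family, one gets simultaneous flags. The key structural input is then the first identity $x\cdot y - y\cdot x = [x,y]$: on the abelianization it says the product is ``alternating up to $[\,,\,]$'', and iterating it against the second and third identities propagates constraints down the lower central series. Concretely, I would compute $x_i\cdot(x_j\cdot x_k)$ in two ways using the module identity $[x,y]\cdot z = x\cdot(y\cdot z) - y\cdot(x\cdot z)$ together with the right-commutativity $(x\cdot y)\cdot z = (x\cdot z)\cdot y$, and push everything into the weight-$3$ part of $\Ln$, where the single deep relation lives.

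The main obstacle — and the place where the bound $n\ge 13$ and ``$4$ generators'' enter — is a dimension count: the commuting-family conditions say the images of $L$ and of $R$ live in abelian subalgebras of $\mathfrak{gl}(\Ln)$, and one has to show that no such pair of abelian subalgebras can simultaneously realize the first identity when the weight-$3$ component of $\Ln$ is too large relative to the number of generators. I expect the argument to reduce to the following: the first identity forces $L(x) - R(x) = \ad(x)$, so $\ad(x) = L(x) - R(x)$ must lie in the (non-abelian!) span of two abelian families, and the failure of $\ad$ to be expressible this way on the $3$-step part is exactly an incompatibility of the $4$-generator, weight-$3$ bracket relations with commuting flags. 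Making this precise is the heart of the proof; the rest is bookkeeping on the chosen presentation. Finally I would verify that the constructed $\Ln$ really is $3$-step nilpotent with $4$ generators and of every dimension $n\ge 13$ (by padding with central generators, which does not affect the obstruction), completing the argument.
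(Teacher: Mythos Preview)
The paper is a survey and gives no proof of this proposition; it simply records the result and points to \cite{BU34}. So there is no ``paper's proof'' to compare against, and your proposal has to be judged on its own merits.

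There are two concrete problems. First, you misread the hypotheses: the Lie algebras have \emph{four} generators and \emph{dimension} $n\ge 13$, not ``generators $x_1,\dots,x_n$''. This matters, because for a nilpotent Lie algebra the minimal number of generators equals $\dim\Ln/[\Ln,\Ln]$; a $3$-step nilpotent Lie algebra on four generators is a quotient of $F_{4,3}$, which has dimension $30$, so there is no example of dimension $>30$. Your closing step, ``padding with central generators, which does not affect the obstruction'', is therefore wrong on two counts: any new central element outside $[\Ln,\Ln]$ \emph{is} a generator, so padding pushes you out of the $4$-generator class; and it certainly cannot produce arbitrarily large $n$. The way one actually varies $n$ in \cite{BU34} is by taking different quotients of $F_{4,3}$ (by central ideals inside $\Ln^3$), not by adding summands.

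Second, the heart of your argument is only a heuristic. You assert that nilpotency of $\Ln$ forces $L(x)$ and $R(x)$ to be nilpotent; this is not automatic (already on an abelian $\Ln$ an LR-structure is just a commutative associative product, which need not have nilpotent multiplications), so it requires a genuine argument in the $3$-step case. More seriously, the sentence ``no such pair of abelian subalgebras can simultaneously realize $\ad(x)=L(x)-R(x)$ when the weight-$3$ part is too large'' is the entire content of the proposition, and you have not indicated any mechanism for proving it. The actual obstruction in \cite{BU34} is obtained by writing down the LR-identities on a specific family of quotients of $F_{4,3}$ and showing the resulting polynomial system is inconsistent; your abelian-subalgebra dimension count would need to be made equally concrete before it could stand as a proof.
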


There are no such examples with less than $4$ generators.

\begin{prop}
Let $\Ln$ be a $2$-step nilpotent Lie algebra or a $3$-step nilpotent Lie algebra with at most $3$ generators.
Then $\Lg$ admits a complete LR-structure.
\end{prop}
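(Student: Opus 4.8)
The plan is to construct an explicit complete LR-structure on $\Ln$ in each of the two cases separately, using the lower central series as a guide and imposing enough of the defining relations to pin down a candidate product, then verifying the three LR-identities
\[
x\cdot y - y\cdot x = -\{x,y\}, \qquad x\cdot(y\cdot z) = y\cdot(x\cdot z), \qquad (x\cdot y)\cdot z = (x\cdot z)\cdot y
\]
together with completeness, i.e.\ nilpotency of all left multiplications $L(x)$ (equivalently, by the symmetry relations, of all right multiplications as well). Recall from the excerpt that an LR-structure on $\Ln$ is the same datum as a post-Lie algebra structure on the pair $(\Lg,\Ln)$ with $\Lg$ abelian, via $-x\cdot y$; so it suffices to produce the bilinear product $x\cdot y$ directly.

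For the $2$-step nilpotent case I would try the ansatz $x \cdot y := -\tfrac12 \{x,y\}$ as a first guess. Since $\{[\Ln,\Ln],\Ln\} = 0$ here, the product lands in the center, so $x\cdot(y\cdot z) = 0 = y\cdot(x\cdot z)$ and similarly $(x\cdot y)\cdot z = 0 = (x\cdot z)\cdot y$; the antisymmetry relation $x\cdot y - y\cdot x = -\{x,y\}$ is immediate; and $L(x)^2 = 0$, so the structure is complete. (This is precisely the $2$-step analogue of the Heisenberg pre-Lie example in the excerpt.) The only point to check is that this is well-defined with no further constraints — which it is, since the formula is manifestly bilinear. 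So the $2$-step case is essentially free.

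The genuine work is the $3$-step nilpotent case with $\gamma_3(\Ln) \ne 0$, where the halved-bracket trick fails because $\{[\Ln,\Ln],\Ln\}$ need not vanish and the associativity-type identities $L(x)L(y) = L(y)L(x)$ and $R(y)R(z) = R(z)R(y)$ become nontrivial. Here I would pick a generating set $e_1,\dots,e_d$ with $d \le 3$, adapted to the lower central series, and define $x \cdot y$ by a formula that is $-\tfrac12\{x,y\}$ modulo $\gamma_2$ plus correction terms valued in $\gamma_2(\Ln)$ and $\gamma_3(\Ln)$ chosen so that the left- and right-commutativity identities hold; the natural candidate is something like $x\cdot y = -\tfrac12\{x,y\} + (\text{bilinear, symmetric-in-the-relevant-slots term in } \gamma_2)$, and one exploits that products of three or more elements land in $\gamma_3$ while products of four land in $0$, so completeness ($L(x)^3 = 0$) is automatic and the identities need only be verified on generators and then extended. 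The main obstacle is purely combinatorial bookkeeping: one must show that the finitely many structure constants of a $3$-generated $3$-step algebra can always be matched by such a correction — this is where the hypothesis ``at most $3$ generators'' is used, since (as the previous Proposition in the excerpt records) with $4$ generators the required linear system over the correction terms can be inconsistent. I would organize this by reducing to the free $3$-step nilpotent Lie algebra on $3$ generators, constructing the LR-structure there by the above ansatz, checking the identities once on that universal object, and then observing that the product descends to any quotient. Alternatively, if a slick uniform formula resists, one falls back on the classification of $3$-step nilpotent Lie algebras with $\le 3$ generators and exhibits a complete LR-structure case by case.
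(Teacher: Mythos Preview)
The paper is a survey and does not itself prove this proposition; it merely states the result and refers to \cite{BU34,BU38}. So there is no ``paper's own proof'' to compare against directly, and I can only assess your plan on its merits.

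Your treatment of the $2$-step case is correct and is the standard construction: $x\cdot y=-\tfrac12\{x,y\}$ is a complete LR-structure on any $2$-step nilpotent Lie algebra over a field of characteristic zero, for exactly the reasons you give.

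For the $3$-step case your plan has a genuine gap. The step ``observing that the product descends to any quotient'' is not justified and is in general false: a Lie ideal $I\trianglelefteq F_{3,3}$ need not be a two-sided ideal for an arbitrary LR-product on $F_{3,3}$, so the product on $F_{3,3}/I$ need not be well-defined. Concretely, writing $x\cdot y=-\tfrac12\{x,y\}+S(x,y)$ with a symmetric correction $S$ valued in $\gamma_2$, for $a\in I\subseteq\gamma_2$ one gets $a\cdot y=-\tfrac12\{a,y\}+S(a,y)$; the bracket term lies in $I$ because $I$ is a Lie ideal, but $S(a,y)$ has no reason to lie in $I$. A sufficient repair is to require that $S$ vanish whenever either argument lies in $\gamma_2$ (so $S$ factors through $(F/\gamma_2)\times(F/\gamma_2)$); then $a\cdot y=-\tfrac12\{a,y\}\in I$ and $y\cdot a=\tfrac12\{a,y\}\in I$, and descent works for every $I\subseteq\gamma_2$, which covers all $3$-generated quotients. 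But then the actual content of the proof becomes: exhibit such an $S\colon V\times V\to\gamma_2$ (with $V$ a complement to $\gamma_2$) making the left- and right-commutativity identities hold on $F_{3,3}$ and on $F_{2,3}$. This is precisely where the hypothesis ``at most $3$ generators'' does real work, and it is an explicit computation you have not carried out, not mere bookkeeping. Your fallback via the classification of $3$-step algebras on $\le 3$ generators is a legitimate alternative, but it is a different and considerably longer argument. Either way, the descent step as you wrote it is the missing idea.
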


For further results we refer to \cite{BU34,BU38}. \\[0.2cm]
For the general case concerning post-Lie algebra structures on pairs of Lie algebras $(\Lg,\Ln)$ we also have several 
results, see \cite{BU41,BU44,BU51,BU58,BU59, END}. Let us explain some of them. 

\begin{prop}
Suppose that $(\Lg,\Ln)$ admits a post-Lie algebra structure, where $\Lg$ is nilpotent. Then $\Ln$ is solvable.
If $\Lg$ is nilpotent with $H^0(\Lg,\Ln)=0$, then $\Ln$ is nilpotent.
\end{prop}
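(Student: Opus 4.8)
The plan is to exploit the module structure of a post-Lie algebra structure together with the associated $1$-cocycle, mimicking the argument that leads to faithful representations in the pre-Lie case. Recall that a post-Lie algebra structure $x\cdot y$ on $(\Lg,\Ln)$ makes $V$ a left module over $\Ln$ via the identity $[x,y]\cdot z = x\cdot (y\cdot z)-y\cdot (x\cdot z)$; equivalently, writing $L(x)$ for left multiplication by $x$, we get a Lie algebra homomorphism $L\colon \Ln\ra \mathfrak{gl}(V)$. Moreover the first axiom $x\cdot y-y\cdot x=[x,y]-\{x,y\}$ says that, as a map $\Ln\ra V$, the identity $I$ is a $1$-cocycle twisted by the $\Lg$-action; after the standard bookkeeping one obtains that $\Lg$ acts on $V$ by the operators $\Lg\ni x\mapsto L(x)+\ad_{\Lg}(x)-\text{(something)}$, and in particular that $\ad_{\Ln}(x)=L(x)-R(x)$ for all $x$. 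The point is that nilpotence of $\Lg$ forces the operators $R(x)$, hence $L(x)$, to have a strong triangularizability property.

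First I would fix the faithful-representation viewpoint: by the analogue of Benoist's observation (Proposition in the excerpt, now for post-Lie structures — see \cite{BU41}), a post-Lie algebra structure on $(\Lg,\Ln)$ yields a linear representation of $\Ln$ of bounded degree whose kernel is controlled; more precisely $L\colon\Ln\ra\mathfrak{gl}(V)$ together with the cocycle $I$ assembles into a representation $\rho$ of $\Ln$ on $V\oplus K$ whose restriction to $V$ is $L$ and for which $\rho(x)(v_0)=x$ where $v_0$ spans the $K$-summand; this is faithful because $I=\id$. Now the hypothesis is on $\Lg$, not $\Ln$, so I would instead run Engel-type reasoning through the $\Lg$-action. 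Since $\Lg$ is nilpotent, the operators coming from the $\Lg$-module structure $\Lg_L$ (the left multiplications viewed as a $\Lg$-representation, valid because $L([x,y]_{\Lg})=[L(x),L(y)]$ even when we bracket in $\Lg$ — this uses the module identity rewritten via $[x,y]=x\cdot y-y\cdot x+\{x,y\}$) generate a nilpotent Lie algebra of matrices, so by Lie's/Engel's theorem there is a common eigenflag. Then $R(x)=L(x)-\ad_{\Ln}(x)$; writing $\ad_{\Ln}(x)$ in terms of $L$ and $R$ and iterating up the flag, one shows the associated Lie algebra $\Ln$ is built from abelian layers, i.e.\ solvable. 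Concretely I would induct on $\dim\Lg$ (or on the nilpotency class), passing to the quotient by a central line $\mathfrak z\le Z(\Lg)$ and lifting solvability of the quotient structure.

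For the second, stronger statement, the extra hypothesis $H^0(\Lg,\Ln)=0$ — i.e.\ the fixed-point space of $\Lg$ acting on $\Ln$ (via the post-Lie left action $x\cdot(-)$) is zero — should be used exactly as a replacement for a non-singularity condition. The idea: solvability already gives a flag of ideals in $\Ln$; to upgrade to nilpotency one must rule out non-trivial semisimple parts of $\ad_{\Ln}$, and a non-zero common fixed vector is precisely the obstruction that $H^0(\Lg,\Ln)=0$ forbids. I would show that if $\Ln$ were solvable but not nilpotent, then by Lie's theorem (over the algebraic closure, then descend) there is a weight vector for $\ad_{\Ln}$ with a genuinely non-trivial weight, and chase this through the compatibility axioms $x\cdot\{y,z\}=\{x\cdot y,z\}+\{y,x\cdot z\}$ to produce a non-zero element of $\Ln$ annihilated by all $x\cdot(-)$, contradicting $H^0(\Lg,\Ln)=0$. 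Equivalently, one argues that the lower central series of $\Ln$ stabilizes at a subspace on which $\Lg$ acts trivially, hence at $0$.

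The main obstacle I expect is the passage from "solvable" to "nilpotent" in the second claim: tracking how the $\Lg$-module structure interacts with the derived/lower-central series of $\Ln$ is delicate because $\Lg$ acts on $\Ln$ neither by derivations of $\{\,,\}$ in the naive sense nor purely linearly — the action intertwines both brackets through the third axiom. Making the cohomological vanishing $H^0(\Lg,\Ln)=0$ bite at the right spot in the flag, and ensuring the argument is insensitive to the field (by base-changing to $\overline{K}$ and checking the produced fixed vector descends), is where the real work lies. The solvability half, by contrast, should follow fairly mechanically from Engel's theorem applied to the $\Lg_L$-module together with the relation $R=L-\ad_{\Ln}$.
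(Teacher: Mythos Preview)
The paper is a survey and does not itself prove this proposition; it is quoted from \cite{BU41,BU51,BU58}. So there is no ``paper's own proof'' to match, but your sketch has genuine gaps that would prevent it from going through, and it misses the structural ingredient that makes the cited proofs work.

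First, two formulas you rely on are incorrect. From the first axiom $x\cdot y-y\cdot x=[x,y]-\{x,y\}$ one gets $L(x)-R(x)=\ad_{\Lg}(x)-\ad_{\Ln}(x)$, not $\ad_{\Ln}(x)=L(x)-R(x)$; and the second axiom $[x,y]\cdot z=x\cdot(y\cdot z)-y\cdot(x\cdot z)$ says $L$ is a representation of $\Lg$ (the bracket $[\,,\,]$), not of $\Ln$ as you first write. More seriously, your appeal to ``Engel's theorem'' to get a common flag for the $L(x)$ does not apply: Engel needs the operators $L(x)$ themselves to be nilpotent, and nilpotency of $\Lg$ gives no such thing (think of a one-dimensional $\Lg$ acting by a nonzero scalar). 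Lie's theorem would give a flag over $\overline K$, but your passage from a flag and the (incorrect) relation $R=L-\ad_{\Ln}$ to ``$\Ln$ is built from abelian layers'' is not an argument; it is circular once the formula is fixed.

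What is entirely missing from your plan is the \emph{third} axiom $x\cdot\{y,z\}=\{x\cdot y,z\}+\{y,x\cdot z\}$, which says that every $L(x)$ is a \emph{derivation} of $\Ln$. This is the key structural fact used in \cite{BU41,BU58}: $L\colon\Lg\to\Der(\Ln)$ is a Lie algebra homomorphism, so $\Lg$ acts on $\Ln$ by derivations and one has an embedding $p\colon\Lg\hookrightarrow\Der(\Ln)\ltimes\Ln$, $p(x)=(L(x),x)$. For the second assertion this already gives a clean proof: over $\overline K$ decompose $\Ln=\bigoplus_{\alpha}\Ln_{\alpha}$ into weight spaces for the nilpotent Lie algebra $L(\Lg)\subseteq\Der(\Ln)$; the hypothesis $H^0(\Lg,\Ln)=0$ forces $\Ln_0=0$ (Engel on the zero-weight space), and since derivations shift weights additively, a generic $x\in\Lg$ has $L(x)_s$ a \emph{nonsingular} derivation of $\Ln$, whence $\Ln$ is nilpotent by Jacobson's theorem. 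For the first assertion one cannot stay at the level of a single flag; the cited proofs reduce to the case $\Ln$ semisimple (the radical of $\Ln$ is characteristic, hence $L(\Lg)$-invariant) and then exploit the embedding $p$ together with the constraint that its translational part is the identity to reach a contradiction with $\Lg$ nilpotent. Your inductive outline on $\dim\Lg$ does not engage with any of this, and without the derivation property there is no reason the quotient by a central line of $\Lg$ should interact with the $\Ln$-structure at all.
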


In case one of the Lie algebras is semisimple, but the other Lie algebra not, we have the following result.

\begin{prop}
Let $(\Lg,\Ln)$ be a pair of Lie algebras, where $\Lg$ is semisimple and $\Ln$ is solvable. Then $(\Lg,\Ln)$
does not admit a post-Lie algebra structure.
\end{prop}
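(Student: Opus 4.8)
Suppose $(\Lg,\Ln)$ admits a post-Lie algebra structure $x\cdot y$, with $\Lg$ semisimple and $\Ln$ solvable. The key object is the family of left-multiplication operators $L(x)\in\End(V)$, $L(x)y=x\cdot y$. From the second axiom of Definition $\ref{plt}$, the map $x\mapsto L(x)$ is a representation of $\Lg$ on $V$; call this module $V_L$. The first step is to extract a Lie algebra homomorphism out of the data: since $\Lg$ is perfect, I expect the commutator relation $x\cdot y-y\cdot x=[x,y]-\{x,y\}$ together with the third axiom to force something rigid. Concretely, write $R(x)y=y\cdot x$ for the right multiplications; then $\ad_{\Lg}(x)=L(x)-R(x)$, where $\ad_{\Lg}$ is the adjoint representation of $\Lg$, while $L(x)-R(x)-\ad_{\Ln}(x)$ acts as... no: rather, $x\cdot y - y\cdot x + \{x,y\} = [x,y]$ exhibits the bracket $[\,,\,]$ of $\Lg$ in terms of $\cdot$ and $\{\,,\,\}$.

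The cleanest route, I think, is via the associated Lie algebra $\Ln=(V,\{\,,\})$ acting on itself: the identity $[x,y]\cdot z = x\cdot(y\cdot z)-y\cdot(x\cdot z)$ says $L\colon \Lg\to\Der$-ish, and moreover the third identity $x\cdot\{y,z\}=\{x\cdot y,z\}+\{y,x\cdot z\}$ says each $L(x)$ is a derivation of the Lie algebra $\Ln$. So we get a Lie algebra homomorphism $L\colon\Lg\to \Der(\Ln)$. Now $\Ln$ is solvable, hence $\Der(\Ln)$ is... not solvable in general, but the image of a semisimple Lie algebra in $\Der(\Ln)$ lands in a reductive-ish part. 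Here is where I would invoke a structural fact: for $\Ln$ solvable, any semisimple subalgebra of $\Der(\Ln)$ acts on $\Ln$ with the nilradical $\nil(\Ln)$ and $[\Ln,\Ln]$ being submodules, and by Lie's theorem / complete reducibility the semisimple part acts completely reducibly, but $\Ln/\nil(\Ln)$ is abelian and $[\Ln,\Ln]\subseteq\nil(\Ln)$. The plan is then to derive a contradiction from a trace or fixed-point computation: show $H^0(\Lg,V_L)\ne 0$, i.e., there is $0\ne v\in V$ with $x\cdot v=0$ for all $x$, and then feed this back into the first axiom to constrain $\{\,,\}$ and $[\,,]$ simultaneously.

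Let me sketch the trace argument, which I expect to be the actual engine. Since $\Lg$ is semisimple, $\Lg=[\Lg,\Lg]$, so $\tr L(x)=0$ for all $x$ (the trace is a homomorphism $\Lg\to K$, hence kills $[\Lg,\Lg]=\Lg$) — and likewise $\tr R(x)=0$ once we know $R$ is suitably behaved, or directly: $\tr(x\cdot y\text{-operator})$. More usefully: by Whitehead's Lemma applied as in the proof of Proposition $\ref{7.8}$, since $V_L$ is a finite-dimensional $\Lg$-module and $\Lg$ is semisimple, the identity map $I\colon\Lg\to V_L$... wait, $V$ need not equal $\Lg$ as a module here. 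Instead: decompose $V_L$ into irreducibles; each nontrivial irreducible has $L(x)$ of trace zero; the trivial summand $V_0=H^0(\Lg,V_L)$ is where $\Lg$ acts by zero. The first post-Lie identity restricted to $V_0$ gives, for $v,w\in V_0$: $v\cdot w-w\cdot v=[v,w]-\{v,w\}$, but also I want to show $V_0$ is a subalgebra for both brackets and that $\cdot$ restricted there is trivial, reducing to a lower-dimensional instance — this is the inductive/reduction piece. The main obstacle, I anticipate, is precisely controlling the interaction between the two brackets on the space of $\Lg$-invariants $V_0$ and ruling out that $V_0$ could "absorb" all of $\Lg$: one must show $\dim V_0 < \dim V$ strictly, or rather that the semisimple algebra $\Lg$, being perfect, cannot be realized inside the solvable-derived structure — likely by observing that $L\colon\Lg\to\Der(\Ln)$ composed with the natural map to $\mathfrak{gl}(\Ln/\nil\Ln)$ and to $\mathfrak{gl}(\nil\Ln/[\nil\Ln,\nil\Ln])$ etc. forces $L$ to be zero (semisimple image in a solvable quotient), whence $\Lg$ acts trivially, whence from $x\cdot y-y\cdot x+\{x,y\}=[x,y]$ with $x\cdot y=0$ we get $[x,y]=\{x,y\}$ for all $x,y$, i.e., the solvable bracket $\{\,,\}$ equals the semisimple bracket $[\,,]$ — a contradiction since a Lie algebra cannot be both semisimple and solvable (unless $0$, but $\Lg\ne 0$). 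I would organize the final write-up around establishing $L\equiv 0$ as the crux, with the derivation property $L\colon\Lg\to\Der(\Ln)$ and the vanishing of semisimple-on-solvable as the two ingredients.
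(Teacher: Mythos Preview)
The paper is a survey and does not actually include a proof of this proposition; it is stated with a reference to \cite{BU41,BU44,BU51}. So there is no ``paper's own proof'' to compare against. That said, your proposal contains a genuine gap at the decisive step.

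You correctly observe that the third axiom makes each $L(x)$ a derivation of $\Ln$, and combined with the second axiom this gives a Lie algebra homomorphism $L\colon\Lg\to\Der(\Ln)$. Your intended endgame --- showing $L\equiv 0$, whence $[x,y]=\{x,y\}$ and the contradiction semisimple $=$ solvable --- is clean. But the argument you give for $L\equiv 0$ does not work. You write that composing $L$ with the maps to $\mathfrak{gl}(\Ln/\nil(\Ln))$, $\mathfrak{gl}(\nil(\Ln)/[\nil(\Ln),\nil(\Ln)])$, etc., ``forces $L$ to be zero (semisimple image in a solvable quotient).'' The successive quotients of $\Ln$ are indeed abelian, but the derivation algebra of an abelian Lie algebra $W$ is all of $\mathfrak{gl}(W)$, which is not solvable; a semisimple Lie algebra can map nontrivially into it. Concretely, $\Ls\Ll_2(K)$ acts faithfully by derivations on the abelian Lie algebra $K^2$, and also on the Heisenberg algebra $\Ln_3(K)$ (act by the standard representation on $\s\{e_1,e_2\}$ and trivially on the center). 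So there is no reason whatsoever for $L$ to vanish, and in fact one can show the opposite: the embedding $x\mapsto(L(x),x)$ of $\Lg$ into $\Der(\Ln)\ltimes\Ln$ forces $\ker L=0$, since the image is semisimple while $\{0\}\times\Ln$ lies in the solvable radical.

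The other threads you sketch --- a trace argument modelled on Proposition~\ref{7.8}, or an inductive reduction via the invariants $V_0=H^0(\Lg,V_L)$ --- are not developed far enough to constitute a proof either. For the trace approach, note that the identity map is \emph{not} a $1$-cocycle for $V_L$ when $\Ln$ is non-abelian (one computes $d_LI(x,y)=-\{x,y\}$), so Whitehead's Lemma does not apply directly as in the pre-Lie case. The actual proofs in the cited references use finer structural information: one exploits the injective homomorphism $\Lg\hookrightarrow\Der(\Ln)\ltimes\Ln$, $x\mapsto(L(x),x)$, together with the fact that $\phi(x):=L(x)+\ad_{\Ln}(x)$ is also a representation of $\Lg$ on $V$, to constrain $\Ln$ sufficiently to rule out solvability. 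Your proposal has not yet located the mechanism that replaces the simple ``$L=0$'' step.
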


The situation is not symmetric in $\Lg$ and $\Ln$.

\begin{prop}
Let $(\Lg,\Ln)$ be a pair of Lie algebras, where $\Ln$ is semisimple and $\Lg$ is solvable and unimodular. 
Then $(\Lg,\Ln)$ does not admit a post-Lie algebra structure.
\end{prop}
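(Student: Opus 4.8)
We are given a post-Lie algebra structure $x\cdot y$ on a pair $(\Lg,\Ln)$ where $\Ln$ is semisimple and $\Lg$ is solvable and unimodular, and we want a contradiction. The plan is to exploit the second axiom of a post-Lie algebra structure, namely $[x,y]\cdot z = x\cdot(y\cdot z)-y\cdot(x\cdot z)$, which says that $L\colon \Lg\to\End(V)$, $x\mapsto L(x)$ with $L(x)y = x\cdot y$, is a representation of $\Lg$ on $V$; the underlying module structure is with respect to the bracket $[\,,\,]$ of $\Lg$ (not $\Ln$). The key extra input is the third axiom $x\cdot\{y,z\}=\{x\cdot y,z\}+\{y,x\cdot z\}$, which says that each operator $L(x)$ is a \emph{derivation} of the Lie algebra $\Ln$. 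So we obtain a Lie algebra homomorphism $L\colon\Lg\to\Der(\Ln)$. Since $\Ln$ is semisimple, $\Der(\Ln)=\ad(\Ln)\cong\Ln$, so $L$ lands in $\ad(\Ln)$; composing with the isomorphism $\ad(\Ln)\cong\Ln$ we get a Lie algebra homomorphism $\phi\colon\Lg\to\Ln$.

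**The main step.** I claim $\phi$ must be surjective, and this is where unimodularity of $\Lg$ enters. First, $\phi(\Lg)$ is a solvable subalgebra of the semisimple $\Ln$, so $\im\phi$ is a proper subalgebra and $\ker\phi\ne 0$ only helps; the real content is to force $\phi$ to be \emph{onto} all of $\Ln$, contradicting solvability of $\Ln$ (a semisimple Lie algebra over a field of characteristic zero has no solvable algebra mapping onto it unless it is zero). To get surjectivity I would use the first axiom $x\cdot y - y\cdot x = [x,y]-\{x,y\}$. Rearranged, for $x,y\in V$ this reads $\{x,y\} = [x,y] - (x\cdot y - y\cdot x) = [x,y] - (L(x)y - L(y)x)$. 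Restricting attention to the structure of $\Ln$: the bracket $\{\,,\,\}$ is built from the bracket of $\Lg$ together with the action $L$. Concretely, one shows that the image of $\ad_\Ln$ (equivalently $\phi(\Lg)$, once we identify) together with the commutators generates $\Ln$; the trace/unimodularity hypothesis is used to rule out the possibility that $L$ has a nonzero "translational" obstruction that would let $\{\,,\,\}$ be semisimple while $\phi(\Lg)$ is only a proper parabolic-type piece. More precisely, since $\Lg$ is unimodular, $\tr(\ad_\Lg x)=0$ for all $x$; combined with the fact that $L(x)=\ad_\Ln(\phi(x))$ has trace zero automatically (derivations of a semisimple algebra are inner and inner derivations are traceless), one compares the two module structures on $V$ — the $\Lg$-module via $L$ and the natural one — via the cocycle identity coming from axiom one, exactly as in the proof of Proposition~\ref{7.8}. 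The unimodularity is precisely what makes the relevant cocycle a coboundary forced to vanish, yielding that $V$ as an $\Ln$-module (via $\phi$) has the "wrong" dimension count unless $\phi$ is surjective with $\Ln=0$.

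**Expected obstacle.** The hard part will be making the surjectivity argument for $\phi$ airtight: a priori $\phi(\Lg)$ is merely \emph{some} solvable subalgebra of $\Ln$, and one must genuinely use that the \emph{second} Lie bracket $\{\,,\,\}$ on $V$ recovered from $L$ and $[\,,\,]$ is semisimple to pull back enough structure. I expect the cleanest route is: pick a Levi-type or Cartan decomposition of $\Ln$, push the unimodularity of $\Lg$ through axiom one to a trace identity on $V$ regarded simultaneously as $\Lg$-module (via $L=\ad_\Ln\circ\phi$) and note that $[x,y]\mapsto \{x,y\}+L(x)y-L(y)x$ forces $\{\,,\,\}$-brackets to lie in $\phi(\Lg)+[\text{stuff}]$, so that the derived algebra $[\Ln,\Ln]=\Ln$ is contained in $\phi(\Lg)$ after iterating; then $\Ln=\phi(\Lg)$ is solvable, so $\Ln=0$, the final contradiction. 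I would first verify the analogous statement with "unimodular" dropped to see exactly which step breaks, then reinsert unimodularity at that point.
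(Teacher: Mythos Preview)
The paper is a survey and does not actually prove this proposition; it is quoted from \cite{BU51}. So there is no ``paper's own proof'' to compare against, and your attempt has to be judged on its merits.

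Your setup is correct and is indeed the standard opening move: axiom~2 makes $L\colon \Lg\to\End(V)$ a representation, axiom~3 makes each $L(x)$ a derivation of $\Ln$, and semisimplicity of $\Ln$ gives $\Der(\Ln)=\ad(\Ln)$, so $L(x)=\ad_\Ln(\phi(x))$ for a Lie algebra homomorphism $\phi\colon\Lg\to\Ln$. After that, however, your ``main step'' is not a proof. You want $\phi$ to be surjective, but the justification offered---``unimodularity is precisely what makes the relevant cocycle a coboundary forced to vanish,'' ``the derived algebra $[\Ln,\Ln]=\Ln$ is contained in $\phi(\Lg)$ after iterating''---consists of phrases borrowed from the pre-Lie/Whitehead argument of Proposition~\ref{7.8} without an actual mechanism connecting them to surjectivity. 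There is no cocycle identified, no iteration exhibited, and no place where the hypothesis $\tr(\ad_\Lg x)=0$ is genuinely invoked. You yourself flag this as the ``expected obstacle,'' which is an honest admission that the heart of the proof is missing.

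One concrete observation you are missing: once you have $x\cdot y=\{\phi(x),y\}$, axiom~1 reads $\{\phi(x),y\}+\{x,\phi(y)\}=[x,y]-\{x,y\}$, and combining this with $\phi([x,y])=\{\phi(x),\phi(y)\}$ gives
\[
\{\,x+\phi(x),\,y+\phi(y)\,\}=[x,y]+\phi([x,y]).
\]
In other words $\psi:=\id_V+\phi$ is itself a Lie algebra homomorphism $\Lg\to\Ln$, not just $\phi$. This is the map you should be analysing: if $\psi$ were bijective you would be done immediately, since then $\Ln\cong\Lg$ would be solvable. The actual argument in \cite{BU51} proceeds by a structural analysis of what happens when $\psi$ has a kernel (it forces a direct-sum decomposition of $\Ln$ into ideals and a corresponding description of $\Lg$), and unimodularity of $\Lg$ enters at a specific point to exclude the remaining solvable possibilities---not through a vague trace/cocycle comparison. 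As written, your proposal does not get past the setup.
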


The unimodularity assumption is essential here. Otherwise any triangular decomposition
of $\Ln$ induces an obvious post-Lie algebra structure on $(\Lg,\Ln)$, where
$\Lg$ is solvable but not unimodular. \\[0.2cm]
In case one of the Lie algebras $\Lg$, $\Ln$ is simple we have the following results.

\begin{prop}
Suppose that $(\Lg,\Ln)$ admits a post-Lie algebra structure, where $\Lg$ is simple. Then $\Ln$ is simple
and isomorphic to $\Lg$. The post-Lie algebra product then is either $x\cdot y=0$ with $[x,y]=\{x,y\}$, or
$x\cdot y=[x,y]$ with $[x,y]=-\{x,y\}$.
\end{prop}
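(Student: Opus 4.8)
The plan is to analyze the post-Lie relations directly and exploit rigidity of simple Lie algebras. Suppose $x\cdot y$ is a post-Lie algebra structure on $(\Lg,\Ln)$ with $\Lg$ simple. First I would observe that each left multiplication operator $L(x)$ is a \emph{derivation} of the Lie algebra $\Ln$: this is precisely the third identity $x\cdot\{y,z\}=\{x\cdot y,z\}+\{y,x\cdot z\}$ in Definition~\ref{plt}. So we get a linear map $L\colon V\to\Der(\Ln)$. Next, from the second identity $[x,y]\cdot z=x\cdot(y\cdot z)-y\cdot(x\cdot z)$ (i.e. $L([x,y])=[L(x),L(y)]$ in $\End(V)$) together with the fact that $[\,,\,]$ on the simple algebra $\Lg$ has $\Lg=[\Lg,\Lg]$, the image $L(V)=L(\Lg)$ is spanned by commutators and hence lies in $\Der(\Ln)$; moreover $L\colon\Lg\to\Der(\Ln)$ is a homomorphism of Lie algebras from the \emph{first} bracket. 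Since $\Lg$ is simple, $L$ is either zero or injective.

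In the case $L\equiv 0$: then the first identity gives $x\cdot y-y\cdot x=[x,y]-\{x,y\}$, so $0=[x,y]-\{x,y\}$, i.e. $\{x,y\}=[x,y]$ and $x\cdot y=0$ everywhere (using $L(x)=0$). That is the first listed product. In the case $L$ injective: then $\Lg$ embeds as a simple subalgebra of $\Der(\Ln)$. Here I would bring in the standard structure theory: $\Der(\Ln)\supseteq\ad(\Ln)$, and $\ad(\Ln)$ is an ideal in $\Der(\Ln)$. The key step is to show the simple image $L(\Lg)$ must coincide with a simple ideal of $\ad(\Ln)$, forcing $\Ln$ to contain a simple ideal isomorphic to $\Lg$, and then to rule out any complement. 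For this I would use the $\Ln$-module structure: $(V,\cdot)$ is a left module over $\Ln$ via the identity $[x,y]\cdot z=x\cdot(y\cdot z)-y\cdot(x\cdot z)$ stated right after Definition~\ref{plt}, and simultaneously $V$ is an $\Lg$-module via $L$. Decomposing $V$ as an $\Lg$-module (completely reducible, since $\Lg$ is simple, in characteristic zero) and tracking how $\Ln$ and the product $\cdot$ interact with this decomposition should pin down $\dim\Ln=\dim\Lg$ and $\Ln\cong\Lg$.

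Once $\Ln\cong\Lg$ is established and $L$ is a nonzero (hence injective, hence by dimension count an isomorphism onto $\ad(\Ln)=\Der(\Ln)$) homomorphism $\Lg\to\ad(\Ln)$, I would finish by identifying the product. After identifying $\Ln$ with $\Lg$ via this isomorphism, $L(x)=\ad(\sigma(x))$ for some Lie-algebra automorphism-type map $\sigma$; feeding this back into the first and second post-Lie identities and using that the Killing form is nondegenerate should force $x\cdot y=[x,y]$ with $\{x,y\}=-[x,y]$, the second listed product. The main obstacle I anticipate is the middle step: ruling out the possibility that $\Ln$ is a proper extension (e.g. $\Lg\ltimes(\text{something})$ or $\Lg\oplus(\text{abelian})$) on which $\Lg$ still acts faithfully by derivations — this requires carefully using \emph{both} bilinear operations and the compatibility identities together, not just the Lie-algebraic embedding $\Lg\hookrightarrow\Der(\Ln)$, since that embedding alone is satisfied by many non-simple $\Ln$. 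I expect the resolution to come from the first identity $x\cdot y-y\cdot x=[x,y]-\{x,y\}$, which ties the antisymmetrization of $\cdot$ to the difference of the two brackets and, combined with $L(x)$ being a derivation and $\Lg$ perfect, leaves no room for extra summands in $\Ln$.
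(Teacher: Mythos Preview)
The paper is a survey and does \emph{not} supply a proof of this proposition; it is quoted from the references \cite{BU44,BU51}. So there is no ``paper's own proof'' to compare with, and I evaluate your proposal on its merits.

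Your framework is correct and is indeed the standard opening: $L\colon \Lg\to\Der(\Ln)$ is a Lie algebra homomorphism (identities two and three), $\Lg$ simple forces $L=0$ or $L$ injective, and the case $L=0$ immediately yields $x\cdot y=0$, $\{,\}=[\,,\,]$. This part is fine.

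The gap is exactly where you locate it, but it is more serious than your sketch suggests. Two concrete problems:
\begin{itemize}
\item You assert that the simple image $L(\Lg)$ ``must coincide with a simple ideal of $\ad(\Ln)$''. For this you need $L(\Lg)\subseteq\ad(\Ln)$, i.e.\ every $L(x)$ is an \emph{inner} derivation of $\Ln$. A priori this can fail: if $L(\Lg)\cap\ad(\Ln)=0$ (the other alternative forced by simplicity of $L(\Lg)$ and the fact that $\ad(\Ln)\trianglelefteq\Der(\Ln)$), then $\Der(\Ln)\supseteq L(\Lg)\ltimes\ad(\Ln)$, which is perfectly possible for non-semisimple $\Ln$. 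Nothing in your module-decomposition outline excludes this branch.
\item Even once $\Ln\cong\Lg$ is established, writing $L(x)=\ad_{\Ln}(\psi(x))$ only gives that $\psi\colon\Lg\to\Ln$ is a Lie algebra isomorphism; it does \emph{not} force $\psi=-\id$, which is what $x\cdot y=[x,y]$ and $\{,\}=-[\,,\,]$ amount to. Identity~one then reads $[x,y]=\{\psi(x),y\}+\{x,\psi(y)\}+\{x,y\}$, and you have not explained why this pins down $\psi$. Your appeal to ``the Killing form is nondegenerate'' is not an argument here.
\end{itemize}

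A genuinely useful observation you are missing: the map $\phi:=L+\ad_{\Ln}\colon\Lg\to\Der(\Ln)$ is \emph{also} a Lie algebra homomorphism (a short computation using all three axioms and $[D,\ad_{\Ln}(y)]=\ad_{\Ln}(Dy)$ for $D\in\Der(\Ln)$), and $\phi=0$ is \emph{exactly} the second listed outcome $x\cdot y=-\{x,y\}=[x,y]$. So the real dichotomy one wants is ``$L=0$ or $\phi=0$'', and the substantive work --- carried out in \cite{BU44} --- is to exclude the case where both $L$ and $\phi$ are injective. Your proposal does not isolate this and does not supply the mechanism that rules it out.
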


If we interchange the roles of $\Lg$ and $\Ln$ we only can prove the following result, see \cite{BU64}.

\begin{prop}
Suppose that $(\Lg,\Ln)$ admits a post-Lie algebra structure, where $\Ln$ is simple and $\Lg$ is reductive.
Then $\Lg$ is simple and isomorphic to $\Ln$.
\end{prop}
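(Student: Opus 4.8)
The plan is to exploit the rigidity we already have for the case when $\Lg$ is simple (the penultimate Proposition) by reducing the reductive case to it. Write $\Lg = \Lz \oplus \Ls_1 \oplus \cdots \oplus \Ls_k$, where $\Lz$ is the center and the $\Ls_i$ are the simple ideals. The first step is to pin down the module structure: a post-Lie algebra structure on $(\Lg,\Ln)$ makes $(V,\cdot)$ a left $\Ln$-module via $x \mapsto L(x)$, where $L([x,y]_\Ln) = [L(x),L(y)]$, and since $\Ln$ is simple, $V$ is a module over the simple Lie algebra $\Ln$. I would first show $\dim\Lg = \dim\Ln$ forces strong constraints, then analyze the right-multiplication operators $R(x)$ and the operator $D(x) := L(x) - R(x)$, which by the defining identities behaves like a derivation-type object relating the two brackets.

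Next I would use unimodularity/trace arguments in the spirit of the proof of Proposition~\ref{7.8}. Since $\Ln$ is simple it is perfect, so every $\ad_\Ln(x)$ has trace zero; combined with the cocycle identity $x\cdot y - y\cdot x = [x,y]_\Lg - \{x,y\}$ and Whitehead's Lemma applied to the simple Lie algebra $\Ln$ acting on $V$, one extracts that the "defect" between $[\ ,\ ]_\Lg$ and $\{\ ,\ \}$ is controlled by an inner term. The key point to establish is that $\Lz$ must act trivially and in fact be zero: if $z \in \Lz$ then $L(z)$ commutes with all $L(x)$ (since $[z,x]_\Lg = 0$ means $\{z,x\} = z\cdot x - x\cdot z$, and one pushes this through the module identity), so $L(z)$ is an $\Ln$-module endomorphism of the simple module $V$; by Schur's Lemma over the algebraically relevant field $L(z)$ is scalar, and a trace computation (as in Proposition~\ref{7.8}, using that $\Ln$ is perfect so all $R(x)$, hence the scalar, have trace zero) forces $L(z) = 0$, whence $z \cdot V = 0$ and then $z$ is forced to be central in $\Ln$ as well, so $z = 0$. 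Thus $\Lg$ is semisimple.

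Once $\Lg$ is semisimple, I would apply the earlier result on pairs $(\Lg,\Ln)$ with $\Lg$ semisimple and $\Ln$ solvable to rule out any solvable factor, and more directly use the structural analysis: the post-Lie identity $\{x,y\}\cdot z = (y\cdot x)\cdot z - y\cdot(x\cdot z) - (x\cdot y)\cdot z + x\cdot(y\cdot z)$ together with $\Lg = \bigoplus \Ls_i$ semisimple shows that each simple ideal $\Ls_i$ of $\Lg$ maps into a corresponding structure on $\Ln$; comparing with the case $\Lg$ simple forces $k = 1$. Concretely, I expect to show that the decomposition of $\Lg$ into simple ideals is compatible with the $\cdot$-product in such a way that projecting onto one factor yields a post-Lie structure on a pair with simple first entry, and the rigidity statement (Proposition preceding this one) then identifies that factor's partner with it; a dimension count closes off extra factors. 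Hence $\Lg$ is simple and isomorphic to $\Ln$.

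The main obstacle is the step showing the center $\Lz$ vanishes: a priori $\Lz$ could act on $V$ through a nontrivial $\Ln$-module endomorphism, and ruling this out cleanly requires the interplay of Schur's Lemma (hence some care about the base field — one may pass to the algebraic closure or invoke that a simple Lie algebra over a field of characteristic zero has no nonzero $\ad$-invariant scalars with the needed trace property) with the trace obstruction that already powered Proposition~\ref{7.8}. The compatibility of the simple-ideal decomposition with the post-Lie product in the semisimple step is also delicate but should follow formally from the three defining identities once one knows $[\Ls_i,\Ls_j]_\Lg = 0$ for $i \neq j$ and tracks how $L(\Ls_i)$ acts.
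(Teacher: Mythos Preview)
There is a genuine gap at the very first step. You assert that left multiplication makes $V$ into an $\Ln$-module, writing ``$L([x,y]_{\Ln})=[L(x),L(y)]$''. But in Definition~\ref{plt} the bracket $[\,,\,]$ is the $\Lg$-bracket and $\{\,,\,\}$ is the $\Ln$-bracket; the second axiom $[x,y]\cdot z=x\cdot(y\cdot z)-y\cdot(x\cdot z)$ therefore says $L\colon \Lg\to \End(V)$ is a representation of $\Lg$, not of $\Ln$. Everything you build on top of this collapses: your Schur's Lemma step treats $L(z)$ as an endomorphism of a \emph{simple} $\Ln$-module, but $V$ carries no $\Ln$-module structure via $L$, and even as a $\Lg$-module there is no reason for $V$ to be irreducible (indeed $\Lg$ is only reductive). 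Likewise your appeal to Whitehead's Lemma ``applied to the simple Lie algebra $\Ln$ acting on $V$'' has no meaning here; the cocycle $I$ lives in $Z^1(\Lg,\Lg_L)$, and $\Lg$ is not assumed semisimple, so Whitehead does not apply.

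The piece of structure you are not using is the third axiom $x\cdot\{y,z\}=\{x\cdot y,z\}+\{y,x\cdot z\}$, which says precisely that every $L(x)$ is a \emph{derivation of $\Ln$}. Since $\Ln$ is simple, $\Der(\Ln)=\ad_{\Ln}(\Ln)\cong \Ln$, so there is a unique linear map $\phi\colon V\to V$ with $L(x)=\ad_{\Ln}(\phi(x))$; the second axiom (together with $Z(\Ln)=0$) then forces $\phi\colon \Lg\to \Ln$ to be a Lie algebra homomorphism. This homomorphism $\phi$, not a putative $\Ln$-module structure on $V$, is the correct bridge between the two brackets, and the argument in \cite{BU64} (which the paper only cites, without giving a proof here) proceeds by analysing $\phi$ and the map $\id-\phi$ rather than by the Schur/Whitehead route you outline. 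In particular, your proposed elimination of the center and of extra simple factors does not go through as written.
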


In case both $\Lg$ and $\Ln$ are semisimple, but not simple, we can have many interesting post-Lie algebra structures.

\begin{ex}
Let $\Lg$ and $\Ln$ both isomorphic to $\Ls\Ll_2(\C)\oplus \Ls\Ll_2(\C)$. Then there exist non-trivial post-Lie 
algebra structures on $(\Lg,\Ln)$. If $\Ln=\Ls\Ll_2(\C)\oplus \Ls\Ll_2(\C)$ has the basis $(e_1,f_1,h_1,e_2,f_2,h_2)$
with Lie brackets
$$
\begin{array}{ll}
\{e_1,f_1\} = h_1, & \{e_2,f_2\} = h_2, \\
\{e_1,h_1\} = -2e_1, & \{e_2,h_2\} = -2e_2, \\
\{f_1,h_1\} = 2f_1, & \{f_2,h_2\} = 2f_2, 
\end{array}
$$
then the following product defines a post-Lie algebra structure on $(\Lg,\Ln)$:
$$
\begin{array}{lll}
e_1\cdot e_2 = -4e_2+h_2,  & f_1\cdot e_2 = 2e_2-h_2,  & h_1\cdot e_2 = 6e_2-2h_2, \\
e_1\cdot f_2 = 4f_2+4h_2,  & f_1\cdot f_2 = -2f_2-h_2, & h_1\cdot f_2 = -6f_2-4h_2, \\
e_1\cdot h_2 = -8e_2-2f_2, & f_1\cdot h_2 = 2e_2+2f_2, & h_1\cdot h_2 = 8e_2+4f_2.
\end{array}
$$
\end{ex}

Here the Lie brackets of $\Lg$ are given by
$$
\begin{array}{lll}
[e_1,f_1]=h_1,                   & [f_1,h_1]=2f_1,      & [h_1,f_2]=-6f_2-4h_2, \\
\left[e_1,h_1\right]=-2e_1,      & [f_1,e_2]=2e_2-h_2,  & [h_1,h_2]=8e_2+4f_2, \\
\left[e_1,e_2\right]=-4e_2+h_2,  & [f_1,f_2]=-2f_2-h_2, & [e_2,f_2]=h_2, \\
\left[e_1,f_2\right]=4f_2+4h_2,  & [f_1,h_2]=2e_2+2f_2, & [e_2,h_2]=-2e_2, \\
\left[e_1,h_2\right]=-8e_2-2f_2, & [h_1,e_2]=6e_2-2h_2, &  [f_2,h_2]=2f_2.
\end{array}
$$
It is easy to see that $\Lg$ is isomorphic to $\Ls\Ll_2(\C)\oplus \Ls\Ll_2(\C)$. \\[0.2cm]
The following table shows, what we know about the existence of post-Lie algebra structures on pairs
$(\Lg,\Ln)$, with respect to the seven different classes of Lie algebras given below. So more precisely 
the classes are abelian, nilpotent non-abelian, solvable non-nilpotent, simple, semisimple non-simple, reductive non-semisimple,
non-abelian and complete non-semisimple Lie algebras.
\vspace*{0.5cm}
\begin{center}
\begin{tabular}{l|lllllll}
$(\Lg,\Ln)$ & \color{blue}{$\Ln$ abe} & \color{green}{$\Ln$ nil} & $\Ln$ sol & \color{red}{$\Ln$ sim}
& \color{orange}{$\Ln$ sem} & \color{purple}{$\Ln$ red} &  \color{cyan}{$\Ln$ com} \\[1pt]
\hline
\color{blue}{$\Lg$ abelian} & $\ck$ & $\ck$ & $\ck$ & $-$ & $-$ & $-$ & $\ck$ \\[1pt]
\color{green}{$\Lg$ nilpotent} & $\ck$ & $\ck$ & $\ck$ & $-$ & $-$ & $-$ & $\ck$ \\[1pt]
 $\Lg$ solvable & $\ck$ & $\ck$ & $\ck$ & $\ck$ & $\ck$ & $\ck$ & $\ck$ \\[1pt]
\color{red}{$\Lg$ simple} & $-$ & $-$ & $-$ & $\ck$ & $-$ & $-$ & $-$ \\[1pt]
\color{orange}{$\Lg$ semisimple} & $-$ & $-$ & $-$ & $-$ & $\ck$ & $?$ & $-$ \\[1pt]
\color{purple}{$\Lg$ reductive}  & $\ck$ & $?$ & $?$ & $?$ & $?$ & $\ck$ & $\ck$ \\[1pt]
\color{cyan}{$\Lg$ complete}  & $\ck$ & $\ck$ & $\ck$ & $?$ & $?$ & $\ck$ & $\ck$ \\[1pt]
\end{tabular}
\end{center}
\vspace*{0.5cm}
Note that a checkmark only means that there is {\em some} pair $(\Lg,\Ln)$ with the given algebraic properties
admitting a post-Lie algebra structure. It does not imply that all such pairs admit a post-Lie algebra structure. \\[0.2cm]
Besides existence of post-Lie algebra structures it is also interesting to obtain classification results.
For the general case such results are difficult to obtain. There are only some classifications in low dimensions.
We refer to \cite{BU58} for a classification of post-Lie algebra structures on $(\Lg,\Ln)$, where both $\Lg$ and $\Ln$ 
are isomorphic to the $3$-dimensional Heisenberg Lie algebra. We have much better classification results for
{\em commutative} post-Lie algebra structures, which will be discussed in the next section.

\section{Commutative post-Lie algebra structures}

A post-Lie algebra structure $(V,\cdot)$ on a pair $(\Lg,\Ln)$ is called {\em commutative}, if the
algebra product is commutative, i.e., if $x\cdot y=y\cdot x$ for all $x,y\in V$. This implies that $[x,y]=\{x,y\}$,
so that the Lie algebras $\Lg$ and $\Ln$ are equal. We only write $\Lg$ instead of the pair $(\Lg,\Lg)$.

\begin{defi}
A {\it commutative post-Lie algebra structure}, or {\em CPA-structure} on a Lie algebra $\Lg$
is a $K$-bilinear product $x\cdot y$ satisfying the identities:
\begin{align*}
x\cdot y & =y\cdot x \\
[x,y]\cdot z & = x\cdot (y\cdot z) -y\cdot (x\cdot z) \\
x\cdot [y,z] & = [x\cdot y,z]+[y,x\cdot z] 
\end{align*}
for all $x,y,z \in V$.
\end{defi}

There is always the {\it trivial} CPA-structure on $\Lg$, given by $x\cdot y=0$ for all $x,y\in \Lg$. 
Any CPA-structure on a semisimple Lie algebra over a field of characteristic zero is trivial, see \cite{BU51}.
This was generalized in \cite{BU52} as follows.

\begin{prop}
Any CPA-structure on a perfect Lie algebra of characteristic zero is trivial.
\end{prop}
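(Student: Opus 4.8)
The plan is to reduce the statement to the known semisimple case (the result from \cite{BU51} quoted just above) by showing that on a perfect Lie algebra every CPA-structure must already vanish on a spanning set of brackets, and brackets span everything. Concretely, let $x\cdot y$ be a CPA-structure on a perfect Lie algebra $\Lg$ of characteristic zero, and write $L(x)$ for the left (= right, by commutativity) multiplication operator $L(x)(y)=x\cdot y$. The second CPA identity says $L([x,y])=[L(x),L(y)]$ as operators, i.e.\ $L\colon\Lg\to\Der$-something is a Lie algebra homomorphism into $\mathfrak{gl}(\Lg)$; in particular $L(\Lg)=L([\Lg,\Lg])$ is spanned by commutators of operators, so every $L(z)$ for $z\in\Lg$ has trace zero. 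This is the same mechanism as in the proof of Proposition~\ref{7.8}.

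Next I would extract a scalar invariant that forces the product to be zero. The third CPA identity, $x\cdot[y,z]=[x\cdot y,z]+[y,x\cdot z]$, says exactly that $L(x)$ is a derivation of the Lie bracket for every $x$. So we have a linear map $x\mapsto L(x)$ from $\Lg$ into $\Der(\Lg)$ which is moreover a homomorphism of Lie algebras (from the bracket on $\Lg$ to the commutator bracket on $\Der(\Lg)$). Since $\Lg$ is perfect and characteristic zero, one can bring in the structure theory: the key point I expect to use is that a perfect Lie algebra has $H^1(\Lg,\Lg)$ controlling outer derivations, or more directly that the trace form / Whitehead-type vanishing kills the relevant cocycle. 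The cleanest route: consider the symmetric bilinear form $\beta(x,y)=\tr(L(x)L(y))$ on $\Lg$. Using that $L$ is a Lie homomorphism into derivations and that each $L(z)$ has trace zero, one shows $\beta$ is invariant under $\ad\Lg$, hence its radical is an ideal; combined with perfectness and commutativity of the product one forces $\beta\equiv 0$, and then in characteristic zero a representation $L$ with $\tr(L(x)L(y))\equiv 0$ and $L$ a Lie homomorphism of a perfect algebra must be... — here is where the real work sits.

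The main obstacle is precisely this last implication: going from "$L$ is a traceless Lie-algebra representation of the perfect algebra $\Lg$ on $\Lg$, consisting of derivations, with vanishing trace form" to "$L\equiv 0$". For perfect (not semisimple) $\Lg$ one cannot simply invoke Weyl's theorem or semisimplicity of the representation. I would handle it by Levi-type/decomposition reasoning on $\Lg$ itself — write $\Lg=\Ls\ltimes\rad\Lg$ but note $\Lg$ perfect forces strong constraints on how $\rad\Lg$ sits — together with the commutativity identity $x\cdot y=y\cdot x$, which gives $L(x)(y)=L(y)(x)$ and hence a rigidity: the operator $L(x)$ is determined by a single "generating" vector, an extra symmetry not available for general post-Lie structures. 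An alternative, possibly shorter, route is cohomological: the commutativity identity combined with the module identity $[x,y]\cdot z=x\cdot(y\cdot z)-y\cdot(x\cdot z)$ exhibits the product as (the symmetrization of) a $1$-cocycle-like datum, and on a perfect Lie algebra the relevant cohomology vanishes by a Whitehead-style argument, forcing the product to be a coboundary which, by commutativity and perfectness, must be zero. I would try the cohomological argument first since it parallels Proposition~\ref{7.8}, and fall back on the structure-theoretic argument if the cocycle bookkeeping does not close cleanly.
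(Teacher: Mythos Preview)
First, note that this survey does not itself prove the proposition; it merely quotes the result from \cite{BU52}. There is therefore no proof in the paper to compare your attempt against.

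As to the proposal on its own merits: you have correctly isolated the two structural facts that matter --- that $L\colon\Lg\to\Der(\Lg)\subseteq\mathfrak{gl}(\Lg)$ is a Lie algebra homomorphism (so $L(\Lg)$ is perfect and every $L(z)$ is traceless) and that each $L(x)$ is a derivation --- but you have also, by your own admission, not closed the decisive gap. Your trace-form route would indeed finish the argument \emph{if} you could show $\beta(x,y)=\tr(L(x)L(y))\equiv 0$, since then Cartan's criterion makes $L(\Lg)$ solvable, and a perfect solvable algebra is zero. However, the sentence ``combined with perfectness and commutativity of the product one forces $\beta\equiv 0$'' carries no argument, and ad-invariance of $\beta$ alone certainly does not give $\beta=0$ on a perfect algebra (the Killing form on any semisimple $\Lg$ is ad-invariant and nonzero). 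Your alternative ``Whitehead-style'' cohomological route is likewise a hope rather than a proof: Whitehead's lemmas are specific to semisimple Lie algebras, and perfect Lie algebras in general do not satisfy $H^1(\Lg,M)=0$, so there is no ready-made vanishing theorem to invoke.

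What is missing is a genuine reduction mechanism. One that actually gets traction is to observe that since each $L(x)$ is a derivation it preserves the solvable radical, so the CPA-structure descends to the semisimple quotient $\Lg/\rad(\Lg)$; there the known semisimple case from \cite{BU51} forces triviality, yielding $\Lg\cdot\Lg\subseteq\rad(\Lg)$. After that one must exploit \emph{both} identities together --- the derivation identity gives $\Lg\cdot\Lg=\Lg\cdot[\Lg,\Lg]\subseteq[\Lg\cdot\Lg,\Lg]$, while the module identity gives $\Lg\cdot\Lg=[\Lg,\Lg]\cdot\Lg\subseteq\Lg\cdot(\Lg\cdot\Lg)$ --- in combination with the structure of $\rad(\Lg)$ under a Levi subalgebra, to drive $\Lg\cdot\Lg$ down to zero. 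That second stage is where the substantive work in \cite{BU52} lies, and it does not drop out of a single trace or cocycle computation.
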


For complete Lie algebras one can classify all CPA-structures. A Lie algebra $\Lg$ is called {\em complete}, if
$Z(\Lg)=0$ and $\Der(\Lg)=\Inn(\Lg)$. This is equivalent to the cohomological conditions $H^0(\Lg,\Lg)=H^1(\Lg,\Lg)=0$.
A complete Lie algebra is called {\em simply-complete}, if $\Lg$ does not have a non-trivial complete ideal.
Every complete Lie algebra can be written as the direct sum of simply-complete Lie algebras. We have the following
result \cite{BU52}.

\begin{thm}
Let $\Lg$ be a complex simply-complete Lie algebra with nilradical $\Ln$. Suppose that $\Lg$ is not metabelian and
that $\Ln=[\Lg,\Ln]$. Then there is a bijective correspondence between CPA-structures on $\Lg$ and elements
$z\in Z([\Lg,\Lg])$, given by
\[
x\cdot y =[[z,x],y].
\]
\end{thm}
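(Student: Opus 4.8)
The plan is to verify the two directions of the claimed bijection separately: first, that the formula $x\cdot y = [[z,x],y]$ with $z\in Z([\Lg,\Lg])$ always defines a CPA-structure on $\Lg$, and second, that every CPA-structure on $\Lg$ arises this way, with $z$ uniquely determined. For the first direction I would simply substitute $x\cdot y = [[z,x],y]$ into the three defining identities. Commutativity $x\cdot y = y\cdot x$ is the substantive point: $[[z,x],y] = [[z,y],x]$ should follow from the Jacobi identity together with $[[z,[x,y]],\cdot\,]$-type terms vanishing, and here is where $z\in Z([\Lg,\Lg])$ is used, since $[x,y]\in[\Lg,\Lg]$ forces $[z,[x,y]]=0$. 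The left-module identity $[x,y]\cdot z = x\cdot(y\cdot z)-y\cdot(x\cdot z)$ and the derivation-type identity $x\cdot[y,z]=[x\cdot y,z]+[y,x\cdot z]$ should then both reduce to repeated applications of Jacobi; I expect these to be purely formal once commutativity is in hand, because a commutative post-Lie structure on a single Lie algebra is governed by relatively rigid identities.

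For the converse, let $x\cdot y$ be an arbitrary CPA-structure on $\Lg$. The idea is to use the hypotheses $H^0(\Lg,\Lg)=H^1(\Lg,\Lg)=0$ (completeness) heavily. From the derivation identity, each left multiplication $L(x)$ with $L(x)y = x\cdot y$ satisfies a Leibniz-type rule with respect to the bracket after symmetrization; more precisely one shows that $x\mapsto L(x)$ or some closely related map lands in $\Der(\Lg)=\Inn(\Lg)$, so that $L(x) = \ad(\phi(x))$ for a linear map $\phi\colon\Lg\to\Lg$, uniquely since $Z(\Lg)=0$. Then the left-module identity translates into a cocycle-type condition on $\phi$, and completeness ($H^1=0$) forces $\phi$ to be inner as well, i.e. of the form $\phi = \ad(w) + (\text{something})$; pinning this down should produce a single element, which one then shows can be taken in $Z([\Lg,\Lg])$ using the hypotheses that $\Lg$ is not metabelian and $\Ln=[\Lg,\Ln]$. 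The conditions "not metabelian'' and "$\Ln=[\Lg,\Ln]$'' are presumably exactly what is needed to rule out extra degrees of freedom (for metabelian $\Lg$ or when $[\Lg,\Ln]\subsetneq\Ln$ one would expect additional CPA-structures not of this form), so they should enter precisely at the step where one argues the relevant cohomology or the relevant centralizer is no larger than $Z([\Lg,\Lg])$.

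The main obstacle I anticipate is the converse direction, specifically the passage from "each $L(x)$ is inner'' to "the whole structure is implemented by one fixed central element $z$.'' Getting $L(x)\in\Inn(\Lg)$ is a clean application of $\Der=\Inn$; the real work is showing the resulting map $\phi$ with $L(x)=\ad\phi(x)$ is itself (essentially) an inner derivation of $\Lg$, which requires combining the commutativity identity with the module identity to see that $\phi\in Z^1(\Lg,\Lg)=B^1(\Lg,\Lg)$, and then carefully using $\Ln=[\Lg,\Ln]$ and non-metabelianness to locate the representing element inside $Z([\Lg,\Lg])$ rather than merely in $\Lg$. I would also need to check uniqueness of $z$: if $[[z,x],y]=[[z',x],y]$ for all $x,y$ then $z-z'$ centralizes $[\Lg,\Lg]$ after one more bracket, and combined with $z-z'\in Z([\Lg,\Lg])$ and the structural hypotheses on $\Lg$ this should force $z=z'$. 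Throughout I would lean on the earlier structural results on complete and simply-complete Lie algebras quoted in the excerpt, and on Proposition on perfect Lie algebras only indirectly (as motivation for why the nilradical $\Ln$ is where all the action is).
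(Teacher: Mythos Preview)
The paper is a survey and does not prove this theorem; it merely states the result and cites \cite{BU52}. So there is no proof in the paper to compare your outline against. That said, let me comment on the substance of your sketch.

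Your forward direction is fine, and you correctly identify the one nontrivial axiom: commutativity of $x\cdot y=[[z,x],y]$ follows from Jacobi together with $[z,[x,y]]=0$, which is exactly $z\in Z([\Lg,\Lg])$. (For the module identity you will also need $[[z,x],[z,y]]=0$; this follows since $[z,x],[z,y]\in[\Lg,\Lg]$ and one more application of Jacobi with $[z,[\Lg,\Lg]]=0$.)

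The converse sketch has a real gap. From the derivation axiom you correctly get $L(x)=\ad(\phi(x))$ with $\phi$ uniquely determined because $Z(\Lg)=0$. But the left-module identity $L([x,y])=[L(x),L(y)]$ then gives
\[
\phi([x,y])=[\phi(x),\phi(y)],
\]
so $\phi$ is a Lie algebra \emph{endomorphism}, not a $1$-cocycle for the adjoint module. Meanwhile commutativity $[\phi(x),y]=[\phi(y),x]$ rewrites as $[\phi(x),y]+[x,\phi(y)]=0$, so $\phi$ is a derivation (equivalently, a $1$-cocycle) \emph{if and only if} $\phi$ vanishes on $[\Lg,\Lg]$, i.e.\ if and only if $\im\phi$ is abelian. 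Establishing this vanishing is the genuine content of the converse, and it is precisely where the hypotheses ``$\Lg$ not metabelian'' and ``$\Ln=[\Lg,\Ln]$'' are used in \cite{BU52}; it does not fall out of $H^1(\Lg,\Lg)=0$ directly as your outline suggests. Once $\phi|_{[\Lg,\Lg]}=0$ is known, $\phi\in\Der(\Lg)=\Inn(\Lg)$ gives $\phi=\ad(z)$, and $[z,[\Lg,\Lg]]=\phi([\Lg,\Lg])=0$ places $z$ in the centralizer of $[\Lg,\Lg]$; showing $z\in[\Lg,\Lg]$ itself (so that $z\in Z([\Lg,\Lg])$) needs a further short argument.

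Finally, uniqueness of $z$ is simpler than you suggest: if $[[z,x],y]=[[z',x],y]$ for all $x,y$ then $[z-z',x]\in Z(\Lg)=0$ for all $x$, so $z-z'\in Z(\Lg)=0$.
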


We believe that the condition $\Ln=[\Lg,\Ln]$ is automatically satisfied for complete Lie algebras. However, we could
not find this statement with a proof in the literature. The only simply-complete metabelian Lie algebra is the 
$2$-dimensional non-abelian Lie algebra $\Lr_2(\C)$, where we can classify all CPA-structures directly. \\[0.2cm]
There are also classification results concerning CPA-structures on nilpotent Lie algebras. 
An important fact here is the following, see \cite{BU57}.

\begin{thm}\label{9.4}
Let $\Lg$ be a nilpotent Lie algebra over a field of characteristic zero satisfying $Z(\Lg)\subseteq [\Lg,\Lg]$.
Then every CPA-structure on $\Lg$ is complete, i.e., all left multiplications $L(x)$ are nilpotent.
\end{thm}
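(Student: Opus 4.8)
The plan is to exploit the two structural conditions — $Z(\Lg)\subseteq[\Lg,\Lg]$ and the CPA-axioms — to show that the left-multiplication algebra $L(\Lg)\subseteq\End(\Lg)$ consists of nilpotent operators, which by Engel's theorem means it acts nilpotently, hence each $L(x)$ is nilpotent. The key CPA-identity to lean on is the representation identity $[x,y]\cdot z = x\cdot(y\cdot z)-y\cdot(x\cdot z)$, which says $L\colon\Lg\to\End(\Lg)$ is a Lie-algebra homomorphism onto $L(\Lg)$; equivalently $\Lg$ is a module over itself via $L$. Combined with commutativity $x\cdot y=y\cdot x$, the third axiom $x\cdot[y,z]=[x\cdot y,z]+[y,x\cdot z]$ says that each $L(x)$ is a derivation of $\Lg$. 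So $L(\Lg)$ is simultaneously a homomorphic image of the nilpotent Lie algebra $\Lg$ (hence itself nilpotent) and a subalgebra of $\Der(\Lg)$.

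First I would set up the descending central series argument. Since $\Lg$ is nilpotent, the image $L(\Lg)$ is a nilpotent Lie algebra of derivations of $\Lg$. I would then argue by induction on the nilpotency class, or more directly: because $\Lg$ is nilpotent, its center $Z(\Lg)$ is nonzero; I want to show $L(x)$ kills $Z(\Lg)$ for all $x$, i.e. $x\cdot Z(\Lg)=0$, and also $Z(\Lg)\cdot\Lg=0$. For the second, note that for $z\in Z(\Lg)$ the operator $L(z)$ is a derivation of $\Lg$ with $L(z)=L(z')$ whenever... — more usefully, $L([z,y])=[L(z),L(y)]$ forces $[L(z),L(y)]=0$ for all $y$, so $L(z)$ is central in $L(\Lg)$. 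Using the derivation property of $L(z)$ together with centrality, and the hypothesis $Z(\Lg)\subseteq[\Lg,\Lg]$, I would deduce $L(z)$ maps into $[\Lg,\Lg]$ and commutes with everything, then bootstrap up the lower central series to force $L(z)$ nilpotent — in fact $\tr L(z)=0$ because $z\in[\Lg,\Lg]$ and every $L(x)$, being a derivation of a nilpotent Lie algebra, is nilpotent hence traceless, so commutators of $L$'s are traceless, so $L([\Lg,\Lg])$ is traceless. Actually the cleanest route: every $L(x)$ is a derivation of the \emph{nilpotent} Lie algebra $\Lg$, and a derivation of a nilpotent Lie algebra need not be nilpotent in general — so I cannot conclude that directly. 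Instead I use that $L(\Lg)$ is a \emph{nilpotent} subalgebra of $\Der(\Lg)$, apply Engel-type reasoning to the module $\Lg$: the nilpotent Lie algebra $L(\Lg)$ acting on $\Lg$ has a common eigenvector / weight-space decomposition $\Lg=\bigoplus_\lambda\Lg_\lambda$ (Fitting/weight decomposition for a nilpotent Lie algebra of operators over, say, the algebraic closure), and I must show only the weight $\lambda=0$ occurs.

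The main obstacle, and the crux of the argument, is ruling out nonzero weights. Here is where $Z(\Lg)\subseteq[\Lg,\Lg]$ enters decisively. Suppose $\lambda\neq0$ is a weight with weight space $\Lg_\lambda\neq0$. Using that each $L(x)$ is a derivation of the Lie bracket, one shows $[\Lg_\lambda,\Lg_\mu]\subseteq\Lg_{\lambda+\mu}$, so the weight decomposition is a grading of the Lie algebra $\Lg$. A nonzero weight then produces a nonzero element in a "top" weight space that is central modulo lower terms, and one pushes this to exhibit a nonzero central element $z$ lying in a nonzero weight space $\Lg_\lambda$ with $\lambda\neq0$; but $z\in Z(\Lg)\subseteq[\Lg,\Lg]$, and $[\Lg,\Lg]=\sum[\Lg_\mu,\Lg_\nu]$, which constrains which weights can appear — together with the nilpotency of the grading (only finitely many weights, and the grading is by a nilpotent Lie algebra so it is really a $\Z^{\dim}$-type grading that must be "positive-ish"), one derives the contradiction $\lambda=0$. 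I would also handle the characteristic-zero hypothesis: it is used to pass to the algebraic closure for the weight decomposition and to ensure weights behave additively (no $p$-th power collapsing). Once all weights vanish, $L(\Lg)$ acts nilpotently on $\Lg$, so each $L(x)$ is nilpotent, i.e. the CPA-structure is complete.

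Two alternative endgames I would keep in reserve in case the weight argument is fiddly: (i) an induction on $\dim\Lg$ via the ideal $Z(\Lg)$ — show $Z(\Lg)$ is a trivial submodule ($\Lg\cdot Z(\Lg)=0=Z(\Lg)\cdot\Lg$) using centrality of $L(Z(\Lg))$ in the nilpotent algebra $L(\Lg)$ plus $Z(\Lg)\subseteq[\Lg,\Lg]$, then pass to $\Lg/Z(\Lg)$, check the CPA-structure descends, and check the hypothesis $Z(\bar\Lg)\subseteq[\bar\Lg,\bar\Lg]$ is inherited (this inheritance is itself a small point needing care); (ii) a trace argument — show directly that $\tr L(x)^k=0$ for all $k$ by expressing powers of $L(x)$ via the module structure and using that $x$, hence relevant elements, lie in $[\Lg,\Lg]$ where all operators in $L(\Lg)$ (and their products landing in the associative envelope of the nilpotent Lie algebra $L(\Lg)$) have trace zero. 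I expect route (i) to be the most robust, with the inheritance of the centrality hypothesis to the quotient being the one genuinely delicate verification.
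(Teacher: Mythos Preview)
The paper is a survey and does \emph{not} contain a proof of Theorem~\ref{9.4}; it only states the result and cites \cite{BU57}. So there is no argument in the present paper against which to compare your proposal directly. The only hint the survey gives about the mechanism of proof is the formula displayed immediately after the theorem,
\[
L(Z(\Lg))^{\lceil \frac{\dim Z(\Lg)+1}{2}\rceil}(\Lg)=0,
\]
which points toward an approach centred on the action of $L(Z(\Lg))$ rather than a global weight-space argument.

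On the substance of your plan: your structural observations are correct and relevant --- $L$ is a Lie algebra homomorphism, each $L(x)\in\Der(\Lg)$, and $L(\Lg)$ is a nilpotent Lie subalgebra of $\Der(\Lg)$. The weight-space decomposition is a natural tool. However, the decisive step, namely using $Z(\Lg)\subseteq[\Lg,\Lg]$ to exclude nonzero weights, is asserted but not carried out, and this is where the real content lies. One concrete identity you do not exploit is that, by commutativity together with the fact that derivations preserve the centre, $L(z)(x)=z\cdot x=x\cdot z=L(x)(z)\in Z(\Lg)$ for every $z\in Z(\Lg)$; hence $L(Z(\Lg))(\Lg)\subseteq Z(\Lg)$. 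Combined with the observation that $L(z)$ is nilpotent for $z\in Z(\Lg)\subseteq[\Lg,\Lg]$ (since weights vanish on $L([\Lg,\Lg])=[L(\Lg),L(\Lg)]$), this is what yields the displayed formula and is the likely entry point of the cited proof.

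Your alternative route (i), induction via $\Lg/Z(\Lg)$, has a genuine gap that you flag as ``needing care'' but which is in fact fatal as stated: the hypothesis $Z(\overline{\Lg})\subseteq[\overline{\Lg},\overline{\Lg}]$ is \emph{not} inherited by the quotient. Already for the Heisenberg algebra $\Ln_3$ one has $Z(\Ln_3)=[\Ln_3,\Ln_3]$, but $\Ln_3/Z(\Ln_3)$ is abelian, so the quotient has $Z=\overline{\Lg}$ and $[\overline{\Lg},\overline{\Lg}]=0$. Any inductive scheme must therefore be organised differently (for instance along the upper central series while tracking a stronger invariant than the bare hypothesis).
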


In this case we have 
\[
L(Z(\Lg))^{\lceil \frac{\dim Z(\Lg)+1}{2} \rceil}(\Lg)=0.
\]

\begin{defi}
A CPA-structure $(V,\cdot)$ on $\Lg$ is called {\em associative} if $\Lg\cdot [\Lg,\Lg]=0$. It is called
{\em central} if $\Lg\cdot \Lg\subseteq Z(\Lg)$.
\end{defi}

The first part of the definition is justified by the following lemma \cite{BU63}.

\begin{lem}
Let $(V,\cdot)$ be a CPA-structure on a Lie algebra $\Lg$. Then we have  $\Lg\cdot [\Lg,\Lg]=0$ if and only
if the algebra $(V,\cdot)$ is associative.
\end{lem}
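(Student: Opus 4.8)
The plan is to derive everything from just the first two defining identities of a CPA-structure, namely commutativity $x\cdot y=y\cdot x$ and the module identity $[x,y]\cdot z=x\cdot(y\cdot z)-y\cdot(x\cdot z)$; I expect the third (derivation) identity to play no role here, and the argument should work over an arbitrary field.

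For the implication ``associative $\Rightarrow \Lg\cdot[\Lg,\Lg]=0$'' I would simply feed associativity and commutativity into the module identity: since $x\cdot(y\cdot z)=(x\cdot y)\cdot z$ and $y\cdot(x\cdot z)=(y\cdot x)\cdot z=(x\cdot y)\cdot z$, the right-hand side collapses and we get $[x,y]\cdot z=0$ for all $x,y,z$. Hence $[\Lg,\Lg]\cdot\Lg=0$, and commutativity turns this into $\Lg\cdot[\Lg,\Lg]=0$.

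For the converse I would first observe that, by commutativity, the hypothesis $\Lg\cdot[\Lg,\Lg]=0$ is equivalent to $[\Lg,\Lg]\cdot\Lg=0$, that is, $[x,y]\cdot z=0$ for all $x,y,z$. Substituting this into the module identity yields the ``flatness'' relation $x\cdot(y\cdot z)=y\cdot(x\cdot z)$ for all $x,y,z$. Associativity then follows in one line by applying commutativity, this flatness relation, and commutativity again:
\[
(x\cdot y)\cdot z=z\cdot(x\cdot y)=x\cdot(z\cdot y)=x\cdot(y\cdot z).
\]

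Since the entire proof is a short symbolic manipulation, I do not anticipate any real obstacle; the only point that needs a little care is the bookkeeping with commutativity when passing between $\Lg\cdot[\Lg,\Lg]$ and $[\Lg,\Lg]\cdot\Lg$ and when reading the flatness relation off the module identity.
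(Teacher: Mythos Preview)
Your proof is correct and entirely self-contained; both directions follow exactly as you describe from commutativity together with the second CPA-axiom $[x,y]\cdot z=x\cdot(y\cdot z)-y\cdot(x\cdot z)$, and the third (derivation) axiom is indeed not needed. The paper does not give its own proof of this lemma but merely states it with a reference to \cite{BU63}, so there is no alternative argument to compare against; your short symbolic derivation is the natural one and is precisely what one would expect the cited proof to be.
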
 

It is easy to see that every central CPA-structure on $\Lg$ is associative and conversely that every associative
CPA-structure on $\Lg$ satisfies $\Lg\cdot \Lg\subseteq Z([\Lg,\Lg])$. Also, every central CPA-structure on $\Lg$ 
satisfies $\Lg\cdot Z(\Lg)=0$. \\[0.2cm]
If $\dim Z(\Lg)=1$ then the formula after Theorem $\ref{9.4}$ yields the following corollary.

\begin{cor}
Let $\Lg$ be a nilpotent Lie algebra over a field of characteristic zero satisfying $Z(\Lg)\subseteq [\Lg,\Lg]$ and
$\dim Z(\Lg)=1$. Then every CPA-structure on $\Lg$ satisfies $\Lg\cdot Z(\Lg)=0$.
\end{cor}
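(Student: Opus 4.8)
The plan is to deduce the statement directly from Theorem~\ref{9.4} and the displayed bound following it, with essentially no work beyond evaluating a ceiling. First I would verify that the hypotheses of Theorem~\ref{9.4} are in force for our $\Lg$: it is nilpotent over a field of characteristic zero, and the standing assumption $Z(\Lg)\subseteq[\Lg,\Lg]$ is precisely the condition required there. Hence, for any CPA-structure $(V,\cdot)$ on $\Lg$, all left multiplications $L(x)$ are nilpotent and
\[
L(Z(\Lg))^{\lceil \frac{\dim Z(\Lg)+1}{2}\rceil}(\Lg)=0,
\]
where $L(Z(\Lg))^{m}$ denotes the span of the composites $L(z_1)\cdots L(z_m)$ with $z_1,\dots,z_m\in Z(\Lg)$.

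Next I would specialize to $\dim Z(\Lg)=1$. Then the exponent equals $\lceil\frac{1+1}{2}\rceil=\lceil 1\rceil=1$, so the identity collapses to $L(z)(\Lg)=0$ for every $z\in Z(\Lg)$; that is, $z\cdot x=0$ for all $z\in Z(\Lg)$ and all $x\in\Lg$. Since a CPA-structure is commutative by definition, $x\cdot z=z\cdot x=0$ as well, whence $\Lg\cdot Z(\Lg)=0$, which is exactly the asserted conclusion.

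I do not expect any obstacle: the entire content is carried by Theorem~\ref{9.4}, and what remains is the trivial arithmetic $\lceil\frac{\dim Z(\Lg)+1}{2}\rceil=1$ together with the harmless use of commutativity to rewrite $Z(\Lg)\cdot\Lg=0$ as $\Lg\cdot Z(\Lg)=0$. At most one should record that the corollary is non-vacuous, i.e.\ that $\dim Z(\Lg)=1$ is compatible with $Z(\Lg)\subseteq[\Lg,\Lg]$ for some nilpotent $\Lg$ --- for instance the Heisenberg Lie algebra $\Ln_3(K)$, for which $Z(\Ln_3(K))=[\Ln_3(K),\Ln_3(K)]$ is one-dimensional. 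If one wished to avoid invoking the general bound, one could instead argue directly: completeness forces each $L(z)$ with $z\in Z(\Lg)$ to be nilpotent, and rewriting $y\cdot z$ as $z\cdot y$ in the second CPA-identity one sees that the $L(z)$ behave well enough that a one-dimensional $Z(\Lg)$ leaves no room for a nonzero such operator; but with Theorem~\ref{9.4} at hand the one-line deduction above suffices.
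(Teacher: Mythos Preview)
Your proposal is correct and follows exactly the paper's own argument: the corollary is obtained by specializing the displayed bound after Theorem~\ref{9.4} to $\dim Z(\Lg)=1$, which gives exponent $\lceil 1\rceil=1$ and hence $\Lg\cdot Z(\Lg)=0$. The additional remarks on non-vacuousness and on a direct argument are unnecessary but harmless.
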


In particular, every CPA-structure on a filiform nilpotent Lie algebra $\Lg$ satisfies $\Lg\cdot Z(\Lg)=0$.
On the other hand, not all CPA-structures on a filiform nilpotent Lie algebra are central or associative.
But we have shown the following result in \cite{BU63}.

\begin{thm}
Let $\Lg$ be a complex filiform Lie algebra of solvability class $d\ge 3$. Then every CPA-structure $(V,\cdot)$ on
$\Lg$ is associative and the algebra $(V,\cdot)$ is Poisson-admissible.
\end{thm}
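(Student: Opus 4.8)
The plan is to deduce both assertions — associativity and Poisson-admissibility — from the structural results on CPA-structures already available in the excerpt, specialized to the filiform case. First I would invoke the Corollary following Theorem~\ref{9.4}: a complex filiform Lie algebra $\Lg$ has $\dim Z(\Lg)=1$ and $Z(\Lg)\subseteq[\Lg,\Lg]$, so every CPA-structure $(V,\cdot)$ on $\Lg$ satisfies $\Lg\cdot Z(\Lg)=0$. This is the starting point. The key extra hypothesis is that the solvability class is $d\ge 3$, i.e. $\Lg$ is \emph{not} metabelian; I expect this is exactly what forces the product into the center.

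The main step is to show $\Lg\cdot\Lg\subseteq Z(\Lg)$, i.e. that the CPA-structure is central. The idea is to use the first two CPA-identities together with the module identity $[x,y]\cdot z = x\cdot(y\cdot z)-y\cdot(x\cdot z)$ and the already-known facts that $\Lg\cdot[\Lg,\Lg]=0$ would give associativity (the Lemma of \cite{BU63}) and that $\Lg\cdot\Lg\subseteq Z([\Lg,\Lg])$ holds automatically once associativity is in hand. So the strategy is: (i) prove associativity $\Lg\cdot[\Lg,\Lg]=0$ directly by induction along the lower central series of the filiform algebra, exploiting the nearly-maximal nilpotency class of filiform Lie algebras — the graded pieces $\Lg^i/\Lg^{i+1}$ are one-dimensional for $i\ge 2$, which makes the derivation identity $x\cdot[y,z]=[x\cdot y,z]+[y,x\cdot z]$ very rigid. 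One feeds in $\Lg\cdot Z(\Lg)=0$ from the Corollary at the bottom of the filtration and propagates upward, using that $[\Lg,\Lg^{d-2}]\subseteq Z(\Lg)$ when the solvability class is $d$. (ii) Once $\Lg$ is associative as an algebra, the CPA-axioms plus commutativity give $\Lg\cdot\Lg\subseteq Z([\Lg,\Lg])$; combined with the filiform structure (where $Z([\Lg,\Lg])$ is small) and the solvability hypothesis $d\ge3$, conclude $\Lg\cdot\Lg\subseteq Z(\Lg)$, i.e. the structure is central. Poisson-admissibility then follows formally: for a commutative associative product with $\Lg\cdot\Lg\subseteq Z(\Lg)$, the Leibniz identity $x\cdot[y,z]=[x\cdot y,z]+[y,x\cdot z]$ is precisely the compatibility making $(V,\cdot,[\,,\,])$ a Poisson algebra, since both sides of the associator-type checks land in the center and vanish against brackets.

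The hard part will be step (i): pushing the induction through all graded layers of a general filiform Lie algebra without knowing a normal form for $\Lg$. The delicate point is the transition from "$L(x)$ maps each $\Lg^i$ into $Z(\Lg)=\Lg^{d_0}$" to "$L(x)$ kills $[\Lg,\Lg]=\Lg^2$"; this is where the solvability-class hypothesis $d\ge 3$ (equivalently $[\Lg^2,\Lg^2]\ne 0$ is \emph{false} only for $d=2$, so $d\ge 3$ genuinely constrains the derived series) has to be used to rule out the metabelian exceptional behaviour, in which central but non-associative CPA-structures do occur. I would isolate this as a lemma: on a filiform Lie algebra of solvability class $\ge 3$, a symmetric bilinear product satisfying the CPA left-module identity and the derivation identity, with $L(Z(\Lg))=0$, automatically satisfies $L(\Lg)[\Lg,\Lg]=0$. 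Everything else — the completeness of $L(x)$, the reduction $\dim Z(\Lg)=1$, the implication associative $\Rightarrow$ central under these hypotheses, and Poisson-admissibility — is either quoted from the results above or a short formal verification.
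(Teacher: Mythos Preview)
The paper itself contains no proof of this theorem --- it is a survey, and the result is simply quoted from \cite{BU63} --- so there is nothing to compare your argument against line by line. That said, your proposal contains a genuine error that would make the argument fail.

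Your ``main step'' is to show that every CPA-structure on such a $\Lg$ is \emph{central}, i.e.\ $\Lg\cdot\Lg\subseteq Z(\Lg)$, and you then derive Poisson-admissibility from centrality. But centrality is \emph{false} in this setting, and the paper gives an explicit counterexample: the Witt algebra $W_n$ for $n\ge 7$ is filiform of solvability class $\ge 3$, and Proposition~\ref{3.15} exhibits CPA-structures with $e_1\cdot e_1=\al e_{n-2}+\cdots$, where $e_{n-2}\notin Z(\Lg)=\langle e_n\rangle$. The sentence immediately after that proposition says exactly this: ``all CPA-structures on the Witt algebra are associative but not necessarily central.'' Your step~(ii) therefore cannot go through; the claim that ``$Z([\Lg,\Lg])$ is small'' for filiform $\Lg$ with $d\ge 3$ is simply wrong (for $W_n$ one has $Z([\Lg,\Lg])\supseteq\langle e_{n-2},e_{n-1},e_n\rangle$).

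The good news is that centrality is not needed. Once associativity ($\Lg\cdot[\Lg,\Lg]=0$, via the Lemma from \cite{BU63}) is established, Poisson-admissibility is immediate: a Poisson algebra is a commutative associative algebra together with a Lie bracket satisfying the Leibniz rule, and the Leibniz rule $x\cdot[y,z]=[x\cdot y,z]+[y,x\cdot z]$ is precisely the third CPA-axiom. So the entire content of the theorem is step~(i), proving $\Lg\cdot[\Lg,\Lg]=0$. Your sketch for that step --- an induction along the filiform filtration starting from $\Lg\cdot Z(\Lg)=0$ and using the derivation identity --- is a reasonable outline, but the sentence ``using that $[\Lg,\Lg^{d-2}]\subseteq Z(\Lg)$ when the solvability class is $d$'' mixes the lower-central and derived series and does not make sense as written; you will need to articulate clearly where exactly the non-metabelian hypothesis $[[\Lg,\Lg],[\Lg,\Lg]]\ne 0$ enters the induction.
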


For certain families of filiform nilpotent Lie algebras a classification of all CPA-structures is possible \cite{BU63,END}.
As an example let us consider the Witt Lie algebra.

\begin{defi}
The Witt Lie algebra $W_n$ for $n\ge 5$ over a field of characteristic zero is defined by the Lie brackets
\begin{align*}
[e_1,e_j] & = e_{j+1}, \quad 2\le j\le n-1,\\[0.1cm]
[e_i,e_j] & = \frac{6(j-i)}{j(j-1)\binom{j+i-2}{i-2}} e_{i+j}, \quad 2\le i\le \frac{n-1}{2},\; i+1\le j\le n-i,
\end{align*}
where $(e_1,\ldots ,e_n)$ is an {\em adapted basis} for $W_n$.
\end{defi}

To give a CPA-structure $(V,\cdot)$ on $\Lg$ explicitly it is enough to list the non-zero products $e_i\cdot e_j$
for all $1\le i\le j\le n$.

\begin{prop}\label{3.15}
Every CPA-structure on the complex Witt algebra $W_n$ for $n\ge 7$ with respect to an adapted basis $(e_1,\ldots ,e_n)$ is given 
as follows,
\begin{align*}
e_1\cdot e_1 & = \al e_{n-2}+\be e_{n-1}+ \ga e_n,\\
e_1\cdot e_2 & = \frac{6(n-4)}{(n-2)(n-3)} \al e_{n-1}+\de e_n,\\
e_2\cdot e_2 & = \ep e_n,
\end{align*}
where $\al,\be,\ga,\de,\ep \in \C$ are arbitrary parameters.
\end{prop}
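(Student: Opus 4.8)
The plan is to use the structure theory already available for CPA-structures on filiform nilpotent Lie algebras. Since $W_n$ is filiform nilpotent with $\dim Z(W_n)=1$ and $Z(W_n)=\langle e_n\rangle\subseteq [W_n,W_n]$, the corollary following Theorem~\ref{9.4} applies, so any CPA-structure $(V,\cdot)$ on $W_n$ satisfies $W_n\cdot Z(W_n)=0$, i.e. $e_i\cdot e_n=0$ for all $i$. Moreover Theorem~\ref{9.4} itself guarantees that every such CPA-structure is complete, with $L(Z(W_n))(W_n)=0$ already recorded. The first main step is therefore to exploit the compatibility identities
\begin{align*}
[x,y]\cdot z & = x\cdot(y\cdot z)-y\cdot(x\cdot z),\\
x\cdot [y,z] & = [x\cdot y,z]+[y,x\cdot z],
\end{align*}
together with commutativity, using the adapted basis. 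The key observation is that in a filiform Lie algebra everything is generated (as a Lie algebra) by $e_1$ and $e_2$, since $e_{j+1}=[e_1,e_j]$ for $2\le j\le n-1$; hence a CPA-structure is completely determined by the three products $e_1\cdot e_1$, $e_1\cdot e_2$, $e_2\cdot e_2$, and one must show these have exactly the claimed shape and that the rest is forced to vanish.

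The second step is to run the ``propagation'' argument: knowing $e_1\cdot e_1$, $e_1\cdot e_2$, $e_2\cdot e_2$, use the second identity with $[y,z]=[e_1,e_j]=e_{j+1}$ to compute $x\cdot e_{j+1}=[x\cdot e_1,e_j]+[e_1,x\cdot e_j]$ inductively, and use the first identity with $[x,y]=e_{j+1}$ to get $e_{j+1}\cdot z=e_1\cdot(e_j\cdot z)-e_j\cdot(e_1\cdot z)$. These recursions express every product $e_i\cdot e_j$ in terms of the three basic ones; one then has to check that consistency (the two identities must agree, and associativity-type constraints from $[x,y]\cdot z$ must hold) forces all products except the listed ones to be zero, and forces $e_1\cdot e_1$ to lie in $\langle e_{n-2},e_{n-1},e_n\rangle$, $e_1\cdot e_2$ in $\langle e_{n-1},e_n\rangle$ with the stated coefficient ratio $\frac{6(n-4)}{(n-2)(n-3)}\al$ coming from the structure constant $[e_1\cdot e_1,e_2]=[\al e_{n-2},e_2]$ and $[e_2,e_{n-2}]=\frac{6(n-4)}{(n-2)(n-3)}e_n$ -- wait, one must be careful: this bracket is $[e_2,e_{n-2}]$ which is $\frac{6((n-2)-2)}{(n-2)(n-3)}e_n$, matching. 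The degree bound $n\ge 7$ enters precisely so that the ``top'' indices $n-2,n-1,n$ are distinct from $1,2$ and from each other and so that all the structure-constant denominators make sense; the cases $n=5,6$ behave differently (which is why Proposition~\ref{3.15} excludes them, and the earlier $W_n$ is only defined for $n\ge 5$).

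The third step is verification: once the candidate family is written down, one checks directly that for arbitrary $\al,\be,\ga,\de,\ep\in\C$ the three CPA-axioms hold. Commutativity is built in; the other two identities need only be tested on basis triples, and because almost all products land in $Z(W_n)=\langle e_n\rangle$ or in the ``near-top'' ideal which brackets trivially against enough of the algebra, most cases collapse immediately. The genuinely nontrivial checks are those involving $e_1$ and $e_2$ as the acting elements against $e_1,e_2$ and against $e_{n-3},e_{n-2},e_{n-1}$, where the explicit Witt structure constants must be plugged in.

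I expect the main obstacle to be the bookkeeping in the inductive propagation: showing rigorously that no ``intermediate'' product $e_i\cdot e_j$ with $2\le i\le j$ and $i+j<n$ can be nonzero, since a nonzero such product would, after repeated bracketing with $e_1$, either produce a contradiction with $W_n\cdot Z(W_n)=0$ or violate one of the two compatibility identities. Organizing this as a downward induction on $n-(i+j)$ (the ``codimension from the top''), and carefully tracking which identity kills which coefficient, is where the real work lies; the degree hypothesis $n\ge 7$ is exactly what makes this induction have enough room to start and close cleanly.
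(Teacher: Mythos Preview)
The survey paper itself does not include a proof of this proposition (it defers to \cite{BU63,END}), but the surrounding results in the paper point to a much shorter route than the one you outline, and your proposal overlooks it.

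Your plan is to run a bare-hands propagation using only $W_n\cdot Z(W_n)=0$ and the two CPA identities, and you correctly anticipate that the hard part is the downward induction showing that all ``intermediate'' products $e_i\cdot e_j$ vanish. But the paper already records the Theorem (just above the definition of $W_n$) that for any complex filiform Lie algebra of solvability class $d\ge 3$, every CPA-structure is \emph{associative}, i.e.\ $\Lg\cdot[\Lg,\Lg]=0$. For the Witt algebra one has $W_n^{(1)}=\langle e_3,\dots,e_n\rangle$ and $W_n^{(2)}$ contains $[e_3,e_4]\in\langle e_7\rangle$, so $W_n$ has solvability class $\ge 3$ exactly when $n\ge 7$. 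This is the real reason for the hypothesis $n\ge 7$---not merely ``room for the induction'' as you suggest---and it immediately kills every product $e_i\cdot e_j$ with $j\ge 3$. Moreover, the remark following the lemma on associativity gives $\Lg\cdot\Lg\subseteq Z([\Lg,\Lg])$, and for $W_n$ one computes $Z([\Lg,\Lg])=\langle e_{n-2},e_{n-1},e_n\rangle$. Thus the three surviving products $e_1\cdot e_1$, $e_1\cdot e_2$, $e_2\cdot e_2$ already lie in $\langle e_{n-2},e_{n-1},e_n\rangle$ with no further work.

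From there only the derivation identity $x\cdot[y,z]=[x\cdot y,z]+[y,x\cdot z]$ with $(x,y,z)=(e_1,e_1,e_2)$ and $(e_2,e_1,e_2)$ is needed: the first forces the $e_{n-2}$-coefficient of $e_1\cdot e_2$ to vanish and its $e_{n-1}$-coefficient to equal $\frac{6(n-4)}{(n-2)(n-3)}\al$ (your computation of $[e_2,e_{n-2}]$ here is correct), while the second forces $e_2\cdot e_2\in\langle e_n\rangle$. The verification that the resulting five-parameter family satisfies all axioms is then essentially trivial, since every product lands in $Z([\Lg,\Lg])\subseteq[\Lg,\Lg]$. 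So your overall strategy is sound, but you should invoke the associativity theorem rather than redo its content by hand; this replaces your ``main obstacle'' by a one-line citation.
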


Note that all CPA-structures on the Witt algebra are associative but not necessarily central. \\[0.2cm]
We also have a result concerning CPA-structures on free-nilpotent Lie algebras $F_{g,c}$ with $g\ge 2$ generators and
nilpotency class $c\ge 2$, see \cite{BU57}.

\begin{thm}
All CPA-structures on $F_{3,c}$ with $c\ge 3$ are central.
\end{thm}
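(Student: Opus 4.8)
The plan is to combine three facts: the defining CPA identities, the structure theorem (Theorem~\ref{9.4}) which tells us that every CPA-structure on a nilpotent Lie algebra $\Lg$ with $Z(\Lg)\subseteq[\Lg,\Lg]$ is complete, and the explicit nilpotency bound $L(Z(\Lg))^{\lceil(\dim Z(\Lg)+1)/2\rceil}(\Lg)=0$. First I would check that $\Lg = F_{3,c}$ satisfies $Z(\Lg)\subseteq[\Lg,\Lg]$: the center of a free-nilpotent Lie algebra on $g$ generators of class $c$ is exactly the top term $\gamma_c(\Lg)$ of the lower central series (for $c\ge 2$), which is contained in $[\Lg,\Lg]$. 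So Theorem~\ref{9.4} applies and all left multiplications $L(x)$ are nilpotent; in particular $(V,\cdot)$ is a complete CPA-structure.

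Next, the key reduction is to pin down $\Lg\cdot\Lg$ using the equivariance identity $x\cdot[y,z]=[x\cdot y,z]+[y,x\cdot z]$, i.e.\ each $L(x)$ is a derivation of $\Lg$. Since $\Lg=F_{3,c}$ is free-nilpotent, it is generated by three elements, say $x_1,x_2,x_3$; a derivation of a free-nilpotent Lie algebra is determined by its values on the generators, and conversely any assignment $x_i\mapsto v_i$ extends (this is the universal property of $F_{3,c}$ composed with the projection, using that derivations into a nilpotent module are unobstructed). I would then use the second identity $[x,y]\cdot z = x\cdot(y\cdot z)-y\cdot(x\cdot z)$, which says the product is a left module structure for $\Lg$ acting via $L$, i.e.\ $L\colon \Lg\to\Der(\Lg)$ is a Lie algebra homomorphism. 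So the real content is: classify Lie algebra homomorphisms $L\colon F_{3,c}\to\Der(F_{3,c})$ whose image consists of nilpotent derivations and which additionally satisfy the symmetry $L(x)y = L(y)x$ (commutativity of the product). The symmetry condition, restricted to generators, forces strong constraints: $L(x_i)x_j = L(x_j)x_i$ for all $i,j$.

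The main obstacle — and the place where the bound $c\ge 3$ enters — will be showing that all these products land in the center $Z(\Lg)=\gamma_c(\Lg)$. I would argue by a weight/grading argument: $F_{3,c}$ is naturally graded, $\Lg=\bigoplus_{k=1}^c \Lg_k$ with $\Lg_1$ the span of the generators. A nilpotent derivation need not be homogeneous, but one can filter and look at the lowest-degree component. Suppose some product $x_i\cdot x_j$ has a nonzero component outside $Z(\Lg)$; take the component of lowest degree $d<c$. Feeding this into the module identity $[x,y]\cdot z=x\cdot(y\cdot z)-y\cdot(x\cdot z)$ with $x,y$ generators and using completeness (nilpotency of all $L(x)$, hence of the subalgebra $L(\Lg)\subseteq\Der(\Lg)$ they generate, which by Engel acts nilpotently on $\Lg$) should produce a contradiction with the lowest-degree choice, unless that degree is already $\ge c$. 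The case $c=3$ versus $c\ge 4$ differs because when $c=3$ the subspace $\Lg_2$ (degree-two part) is not central, so products of generators could a priori hit $\Lg_2$; the argument must rule this out, and this is where I expect the bulk of the computation — tracking how $L(x_1),L(x_2),L(x_3)$ interact via the Jacobi-type constraint $L([x_i,x_j])=[L(x_i),L(x_j)]$ together with $\dim(\Lg_2)=3$ and $\dim(\Lg_1)=3$. Once all products lie in $Z(\Lg)$, the structure is "central" by definition, which is exactly the claim. I would finish by remarking that the same grading argument, combined with the parameter count from the nilpotency bound on $L(Z(\Lg))$, shows there is no further collapse, so "central" is the sharp conclusion.
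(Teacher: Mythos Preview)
First, a point of context: this paper is a survey and does not contain a proof of the theorem; it simply refers to \cite{BU57}. So there is no proof here to compare your proposal against directly.

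On the substance of your proposal: the setup is correct. Indeed $Z(F_{3,c})=\gamma_c(F_{3,c})\subseteq[F_{3,c},F_{3,c}]$ for $c\ge 2$, so Theorem~\ref{9.4} applies and every $L(x)$ is nilpotent; since $L$ is a Lie homomorphism into $\Der(\Lg)$ whose image consists of nilpotent operators, Engel's theorem gives that $L(\Lg)$ acts nilpotently. It is also true that every derivation of a free-nilpotent Lie algebra has non-negative degree with respect to the natural grading. All of this is fine.

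The genuine gap is exactly where you write ``should produce a contradiction'' and ``this is where I expect the bulk of the computation''. As stated, the lowest-degree argument does not close. Concretely: suppose the lowest degree appearing in the products $x_i\cdot x_j$ is $1$, so that the degree-$0$ components of the derivations $L(x_i)$ are matrices $A_i\in M_3(K)$ acting on $\Lg_1$. Commutativity on generators gives $A_ie_j=A_je_i$; nilpotency of $L(x_i)$ forces each $A_i$ to be nilpotent; and comparing the degree-$1$ parts of $[x_i,x_j]\cdot x_k=x_k\cdot[x_i,x_j]$ forces $[A_i,A_j]=0$. But these constraints together still admit nonzero solutions --- for instance $A_1=E_{21}$, $A_2=A_3=0$ --- so no contradiction appears at the lowest graded level from the identities you invoke. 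The obstruction to extending such a leading term to a genuine CPA-structure must therefore come from a more intricate interaction between several graded components and from commutativity on \emph{all} of $\Lg$, not just on pairs of generators. Your sketch gives no mechanism for organising this, and the module identity $[x,y]\cdot z=x\cdot(y\cdot z)-y\cdot(x\cdot z)$ alone (with $x,y$ generators) does not produce an element of degree below $d$ as your phrasing suggests. In short, what you defer as ``the bulk of the computation'' is essentially the entire content of the proof; the preliminary reductions, while correct, do not isolate the mechanism that forces centrality for $g=3$ and $c\ge 3$ and that, by contrast, fails for $F_{3,2}$.
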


The result is not true for $F_{3,2}$. We believe that all CPA-structures on $F_{g,c}$ with $g\ge 2$ and $c\ge 3$ 
are central. However, we could only prove a part of it so far, see \cite{BU57}. \\[0.2cm]
Finally we have determined the CPA-structures on certain infinite-dimensional Lie algebras, e.g., on 
Kac-Moody algebras \cite{BU60}. For the infinite-dimensional Witt algebra ${\mathcal W}$ in characteristic zero with a set of 
basis vectors $\{e_i\}$ and Lie brackets
\[
[e_i,e_j]=(j-i)e_{i+j}
\]
we have that all CPA-structures on ${\mathcal W}$ are trivial. Note that in case the basis is finite, 
$\mathcal{W}$ is isomorphic to $W_n$ for some $n$.

\section*{Acknowledgments}

Dietrich Burde is supported by the Austrian Science Foun\-da\-tion FWF, grant I3248.

\end{document}